\documentclass[11pt, a4paper]{amsart}
\usepackage[english]{babel}
\usepackage[T1]{fontenc}
\usepackage[utf8]{inputenc}

\usepackage{geometry}
\geometry{
		left = 2.5cm,
		right = 2.5cm,
		top = 2.5cm,
		bottom = 2 cm,
		heightrounded
}

\usepackage[nodisplayskipstretch]{setspace}

\usepackage{amsrefs}
\DefineSimpleKey{bib}{primaryclass}{}
\DefineSimpleKey{bib}{archiveprefix}{}
\newcommand*{\bracketize}[1]{[#1]}
\BibSpec{arxiv}{%
  +{}{\PrintAuthors}{author}
  +{,}{ \textit}{title}
  +{}{ \parenthesize}{date}
  +{,}{ arXiv:}{eprint}
  +{}{ \bracketize}{primaryclass}
}

\usepackage{graphicx}

\usepackage{tikz}
\usetikzlibrary{positioning}
\usetikzlibrary{arrows.meta}
\usetikzlibrary{cd}
\usetikzlibrary{babel}

\usepackage[shortlabels]{enumitem}

\usepackage{amssymb}

\usepackage{dsfont}

\newcommand{\eqdef}{\mathrel{\mathop:}=}
\newcommand*{\ds}{\displaystyle}
\newcommand*{\aand}{\text{ and }}
\newcommand*{\oor}{\text{ or }}
\newcommand*{\aas}{\text{ as }}
\newcommand*{\wnv}{\text{whenever }}
\newcommand*{\fevery}{\text{ for every }}
\newcommand*{\pt}{\text{.}}
\newcommand*{\vg}{\text{, }}
\newcommand*{\pts}[1]{\left( #1 \right)}
\newcommand{\col}[1]{\left[ #1 \right]}
\newcommand{\roost}[1]{\left\{ #1 \right\}}
\newcommand{\bkt}[1]{\left\langle #1 \right\rangle}
\newcommand{\abs}[1]{\left\vert #1 \right\vert}
\newcommand{\nrm}[1]{\left\| #1 \right\|}
\newcommand{\floor}[1]{\lfloor #1 \rfloor}
\newcommand*{\id}{\mathrm{id}}
\newcommand*{\NN}{\mathds{N}}

\newcommand*{\RR}{\mathds{R}}
\newcommand*{\CC}{\mathds{C}}
\newcommand*{\II}{\mathds{I}}
\newcommand*{\US}{\mathds{S}^2}
\newcommand*{\UC}{\mathds{S}^1}
\newcommand{\set}[2]{\left\{ #1 \, : \, #2 \right\}}
\newcommand*{\compl}[1]{#1^{\mathsf{c}}}
\newcommand*{\restric}[2]{#1 |_{#2}}
\newcommand*{\cl}[1]{\overline{#1}}
\newcommand*{\fr}[1]{\partial #1}

\newcommand*{\odisk}[2]{\mathds{D}_{#2}\pts{#1}}
\newcommand*{\cdisk}[2]{\cl{\mathds{D}}_{#2}\pts{#1}}
\DeclareMathOperator{\inter}{int}
\DeclareMathOperator{\exter}{ext}
\DeclareMathOperator{\rel}{rel}
\newcommand*{\eps}{\varepsilon}
\newcommand*{\e}[1]{\mathrm{e}^{#1}}
\newcommand*{\seq}[2]{\pts{#1}_{#2}}
\newcommand*{\opint}[1]{\pts{#1}}
\newcommand*{\clint}[1]{\col{#1}}
\newcommand*{\clopint}[1]{\left[ \left. #1 \right) \right.}
\newcommand*{\opclint}[1]{\left( \left. #1 \right] \right.}
\newcommand*{\infdist}[3][]{d_{\infty}^{#1} \pts{#2,#3}}
\newcommand*{\dif}[2]{\mathrm{D} #1 \! \pts{#2}}
\newcommand*{\0}{\mathbf{0}}
\newcommand*{\1}{\mathbf{1}}
\newcommand*{\inff}{\mathbf{\infty}}
\newcommand*{\ii}{\mathrm{i} \,}
\newcommand*{\rad}[1]{\vec{r}_{#1}}
\DeclareMathOperator{\aangle}{ang}
\newcommand*{\ang}[1]{\aangle \! \pts{#1}}
\newcommand*{\iprod}[2]{\bkt{ #1 , #2 }}
\newcommand*{\ddx}{{}^{\partial}{\mskip -10mu \; / \, \mskip -8mu}_{\partial x}}
\newcommand*{\ddy}{{}^{\partial}{\mskip -10mu \; / \, \mskip -8mu}_{\partial y}}
\DeclareMathOperator{\diaag}{diag}
\newcommand*{\diag}[1]{\diaag \! \col{#1}}
\DeclareMathOperator{\homeos}{Homeo}
\newcommand*{\homeo}[2][]{\homeos_{\, #1} \! (#2)}
\DeclareMathOperator{\difeo}{Diff}
\newcommand*{\diff}[3][]{\difeo^{#2}_{\, #1} \! (#3)}
\DeclareMathOperator{\specorthgroup}{SO}
\newcommand*{\SO}[2]{\specorthgroup \! \pts{#1 ; #2}}
\DeclareMathOperator{\projspecgroup}{PSL}
\newcommand*{\PSL}[2]{\projspecgroup \! \pts{#1 ; #2}}
\DeclareMathOperator{\projspecunigroup}{PSU}
\newcommand*{\PSU}[2]{\projspecunigroup \! \pts{#1 ; #2}}
\DeclareMathOperator{\Mob}{M\ddot{o}b}
\newcommand*{\mob}[2][]{\Mob_{#1} (#2)}
\DeclareMathOperator{\Rot}{Rot}
\newcommand*{\rot}[2][]{\Rot (#2)}
\DeclareMathOperator{\Stab}{Stab}
\newcommand*{\stab}[2]{\Stab_{#2} \pts{#1}}
\newcommand*{\reg}[1]{{C}^{#1}}
\newcommand*{\cone}[1]{\mathcal{C}_{#1}}
\newcommand*{\omegalim}[2]{\omega_{#2} \pts{#1}}
\newcommand*{\alphalim}[2]{\alpha_{#2} \pts{#1}}
\newcommand*{\orb}[2][]{\gamma_{#1} \pts{#2}}

\newcommand*{\stman}[1][]{\mathrm{W}^{\mathrm{s}}_{#1}}
\newcommand*{\tanspace}[2]{T_{#1} #2 }
\newcommand*{\htop}[1]{h_{\text{top}} \pts{#1}}
\newcommand*{\gequiv}{\sim_{G}}
\newcommand*{\gtequiv}{\sim_{G_3}}
\newcommand*{\acc}[1]{\mathcal{A}_{#1}}
\newcommand*{\hp}{\mathcal{H}^{+}}
\newcommand*{\hm}{\mathcal{H}^{-}}
\DeclareMathOperator{\lftside}{L}
\newcommand*{\lft}[1]{\lftside \! \pts{#1}}
\DeclareMathOperator{\rgtside}{R}
\newcommand*{\rgt}[1]{\rgtside \! \pts{#1}}
\newcommand*{\mxy}[2]{M \! \col{#1,#2}}
\newcommand*{\mbar}[1]{\hat{M} \! \col{#1}}
\newcommand*{\mabc}[3]{\hat{M} \! \col{#1,#2,#3}}
\newcommand*{\T}[2]{T_{#1 #2}}
\newcommand*{\betap}{\beta^{+}}
\newcommand*{\betam}{\beta^{-}}
\newcommand*{\dtm}{\delta^{-}}
\newcommand*{\dtp}{\delta^{+}}

\newcommand*{\hphi}{{\phi}_0}
\newcommand*{\ttz}{\theta_0}
\newcommand*{\ttm}{\theta_M}
\newcommand*{\rhop}{\rho^{+}}
\newcommand*{\rhom}{\rho^{-}}
\newcommand*{\rhoz}{\rho_{0}}
\newcommand*{\Ah}{\hat{A}}
\newcommand*{\Cd}{C_{\delta}}
\newcommand*{\Dz}{\mathds{D}_{0}}
\newcommand*{\fd}{f_{\delta}}
\newcommand*{\gh}{\hat{g}}
\newcommand*{\hh}{\hat{h}}
\newcommand*{\hd}{h_{\delta}}
\newcommand*{\KK}{\mathcal{K}}
\newcommand*{\pd}{p_{\delta}}
\newcommand*{\pmin}{p_m}
\newcommand*{\pmax}{p_M}
\newcommand*{\Rz}{R_0}
\newcommand*{\sbar}{\bar{s}}

\newcommand{\tht}{\hat{t}}
\newcommand*{\tm}{t^{-}}
\newcommand*{\tp}{t^{+}} 
\newcommand*{\tbr}{\bar{t}}
\newcommand*{\ttil}{\tilde{t}}
\newcommand*{\uz}{u_0}
\newcommand*{\vz}{v_0}
\newcommand*{\xh}{\hat{x}}
\newcommand*{\yh}{\hat{y}}

\newcommand*{\zz}{z_0}
\newcommand*{\zh}{\hat{z}}
\newcommand*{\zhm}{\zh_{-}}
\newcommand*{\zhp}{\zh_{+}}
\newcommand*{\zt}{\tilde{z}}
\newcommand*{\zzt}{\tilde{z}_0}
\newcommand*{\zmin}{z_m}
\newcommand*{\zmax}{z_M}
\newcommand*{\wz}{w_0}
\newcommand*{\wh}{\hat{w}}
\renewcommand*{\wp}{w^{\perp}}

\newtheorem*{thma}{Theorem A}
\newtheorem*{thmb}{Theorem B}
\newtheorem*{thmc}{Theorem C}
\newtheorem*{thm}{Theorem}

\theoremstyle{definition}
\newtheorem{defi}{Definition}[section]
\newtheorem{lemma}[defi]{Lemma}
\newtheorem{cor}[defi]{Corollary}

\theoremstyle{remark}
\newtheorem{claim}{Claim}
\numberwithin{claim}{defi}
\newtheorem{auxclaim}{Claim}
\numberwithin{auxclaim}{subsection}
\newtheorem{case}{Case}

\usepackage{hyperref}
\hypersetup{ 
		colorlinks = true,
    linkcolor = blue, 
    urlcolor = blue, 
    citecolor = black 
}

\title{On proper extensions of the conformal group of sphere diffeomorphisms}
\subjclass[2020]{Primary 57M60; Secondary 37B05, 57K20}

\author{Ulisses Lakatos de Mello}
\address{Instituto de Matemática e Estatística, Universidade de São Paulo, Rua do Matão 1010, Cidade Universitária, 05508-090 São Paulo, SP, Brazil}
\email{\href{mailto:lakatos@ime.usp.br}{lakatos@ime.usp.br}}
\urladdr{\href{http://www.ime.usp.br/~lakatos}{www.ime.usp.br/\(\sim\)lakatos}}
\thanks{U. Lakatos was partially supported by CNPq, Grant No. 159129/2015-0}

\author{Fábio Armando Tal}
\address{Instituto de Matemática e Estatística, Universidade de São Paulo, Rua do Matão 1010, Cidade Universitária, 05508-090 São Paulo, SP, Brazil}
\email{\href{mailto:fabiotal@ime.usp.br}{fabiotal@ime.usp.br}}
\thanks{F.A. Tal was supported by Fapesp and CNPq -- Brazil}

\begin{document}

\begin{abstract}
	In this paper, we prove that any group of diffeomorphisms acting on the 2-sphere and properly extending the conformal group of Möbius transformations must be at least 4-transitive or, more precisely, arc 4-transitive. In addition, we show that any such group must always contain an element of positive topological entropy. We also provide an elementary characterization, in terms of transitivity, of the Möbius transformations within the full group of sphere diffeomorphisms.
\end{abstract}

\maketitle

\tableofcontents

\setstretch{1.04}
 
\section{Introduction}

	Let \(M\) be a closed and oriented topological manifold, and consider the set of all its \emph{orientation preserving homeomorphisms}, denoted simply by \(\homeo{M}\). The usual \emph{uniform convergence metric} turns this set -- endowed with the composition operation \(\circ\) -- into a topological group, the subgroups of which one can try to understand and classify.
	
	In his early 2000s' essay \cite{Ghys01}, É. Ghys proposed such a classification scheme for closed and transitive groups acting on the unit circle \(\UC\). Here, closed refers to the uniform topology mentioned above, while \emph{transitive} means that any given point \(p\) can be mapped onto another given point \(q\) via some transformation in the group. The corresponding result, which we quote later on this paper, was proven in 2006 by J. Giblin and V. Markovic, see \cite{GiblinMarkovic06}.
	
	A relevant part of understanding the closed subgroups of \(\homeo{M}\) is to deal not only with their inclusions, but also with questions of \emph{maximality}. In other words, determining whether or not between a given subgroup and the full group of homeomorphisms one may find proper intermediate subgroups, up to their closures. For example, F. Le Roux proved in \cite{LeRoux14} that, in triangulable manifolds of dimension two or higher, the group of area preserving homeomorphisms is always maximal.
	
	Specializing in the unit sphere \(\US\), F. Kwakkel and the second author derived in \cite{TalKwakkel14} a number of results concerning important subgroups of \(\homeo{\US}\), one of which is \(\mob{\US}\), the \emph{Möbius group} of conformal 2-sphere diffeomorphisms. Among others, they left the question of whether the Möbius group is maximal within the full group of homeomorphisms, meaning that there would be no proper intermediate group of diffeomorphisms between \(\mob{\US}\) and \(\homeo{\US}\), up to uniform closure. \footnote{\cite{TalKwakkel14} is an unpublished work. Due to a difference of opinion between its authors, it remains as an unsubmitted preprint. Note that we reference here the v2 in arXiv, not the v3.  In any case, this note, being mostly self-contained, makes no direct use of any result in that work.}
	
	This question is known to have a positive answer in the case of the circle, and the proof there is related to higher orders of transitivity. In this paper we provide insight, from the transitivity viewpoint, into extensions of the Möbius group acting on the sphere, as stated below.
	
	\begin{thma} \label{thm:thma}
		Let \(G \subset \diff{1}{\US}\) be a group that is a proper extension of \(\mob{\US}\). Then, its identity component \(G_0\) is arc \(4\)-transitive. In particular, \(G\) is at least \(4\)-transitive.
	\end{thma}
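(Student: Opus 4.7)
The plan is to reduce the theorem, via the sharp \(3\)-transitivity of \(\mob{\US}\), to showing that the stabilizer \(H \eqdef \stab{p_1, p_2, p_3}{G_0}\) of a fixed ordered triple of distinct points acts transitively on \(\US \setminus \{p_1, p_2, p_3\}\); arc \(4\)-transitivity of \(G_0\) will then follow by concatenating a Möbius arc, which aligns three coordinates of two given \(4\)-tuples, with a path in \(H\) that aligns the fourth.

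Since \(\mob{\US}\) is a connected Lie group we automatically have \(\mob{\US} \subseteq G_0\); a preliminary step is to show that this inclusion is strict. If \(G_0 = \mob{\US}\), then any \(g \in G \setminus \mob{\US}\) would normalize \(\mob{\US}\), as conjugation preserves the identity component; but the normalizer of \(\mob{\US}\) inside \(\diff{1}{\US}\) coincides with \(\mob{\US}\) itself, a rigidity statement I anticipate as available from earlier in the paper. Fixing then \((p_1, p_2, p_3) = (0, 1, \infty)\) and picking any \(h \in G_0 \setminus \mob{\US}\), composing with the unique Möbius transformation matching \(h\) on the triple yields a non-identity element \(h' \in H\). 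A short but important observation is that \(H\) is path-connected: given a path \(g_t\) in \(G_0\) from \(\id\) to an arbitrary \(g \in H\), the unique Möbius correction \(m_t\) returning \((g_t(p_i))\) to \((p_i)\) depends continuously on \(t\) by sharp \(3\)-transitivity, so \(m_t \circ g_t\) traces a path in \(H\) that terminates at \(g\).

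The heart of the argument is producing enough elements of \(H\) to make the \(H\)-orbit of every \(p_4 \in \US \setminus \{p_1, p_2, p_3\}\) contain a neighborhood. Since \(h'\) is not Möbius, \(\dif{h'}{p}\) fails to be complex linear at some \(p\); otherwise \(h'\) would be holomorphic and hence Möbius. After a Möbius conjugation we take \(p = p_1\) and write \(\dif{h'}{0}(z) = a z + b \bar z\) with \(b \ne 0\). Setting \(m_\eps(z) = \eps z\) for \(\eps \in \CC^*\), the conjugates \(h'_\eps \eqdef m_{1/\eps} \circ h' \circ m_\eps\) lie in \(G_0\), and differentiability yields, as \(\abs{\eps} \to 0\) along a ray of argument \(\phi/2\), the \(\reg{1}\)-limit \(A_\phi(z) = a z + b \e{-\ii \phi} \bar z\) on compact subsets of \(\CC\); working with the closure of \(G\) in \(\diff{1}{\US}\) (which I take to be built into the setup, or else to be extracted from the hypotheses on \(G\)), each \(A_\phi\) lies in \(G_0\). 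Composing each \(A_\phi\) with the Möbius \(m_{1/A_\phi(1)}\) produces a continuous \(1\)-parameter family \(\phi \mapsto B_\phi \in H\) of maps of the form \(z \mapsto \alpha_\phi z + \beta_\phi \bar z\) with \(\alpha_\phi + \beta_\phi = 1\). A direct computation using \(b \ne 0\) shows that this curve is not tangent to any single \(1\)-parameter subgroup of the \(2\)-dimensional real group of affine maps fixing \(0, 1, \infty\); hence the closed subgroup generated by \(\{B_\phi\}\) is the entire such affine group. This group acts with open orbits on \(\US \setminus \{p_1, p_2, p_3\}\), so by path-connectedness of \(H\) and connectedness of \(\US \setminus \{p_1, p_2, p_3\}\) a clopen argument forces the \(H\)-orbits to exhaust this space.

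The principal obstacles are twofold. First, the zoom-in limit has to be promoted to a genuine element of \(G_0\); while \(\reg{1}\)-convergence on compact subsets is immediate from differentiability, one must work in a topology making \(G_0\) closed, either via the standing hypotheses on \(G\) or by passing to its closure. Second, the transversality claim — that \(\phi \mapsto B_\phi\) does not lie in a single \(1\)-parameter subgroup of the affine group — is the decisive technical step: without it one might only obtain a \(1\)-dimensional orbit for \(p_4\), and a finer use of the non-conformal jet of \(h'\) at further points would be required.
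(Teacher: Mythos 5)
Your proposal takes a genuinely different route from the paper's: instead of building isotopies inside \(G\), you try to blow up at a point of non-conformality and extract a non-conformal real-linear map -- and then a \(2\)-parameter affine group -- as a \(C^1\) limit of the conjugates \(m_{1/\eps}\circ h'\circ m_\eps\). That step is fatally broken in two independent ways. First, Theorem A carries no closedness hypothesis: \(G\) is merely a subgroup of \(\diff{1}{\US}\), so a \(C^1\) limit of the zoom-ins has no reason to land in \(G\), let alone in \(G_0\); passing to the closure of \(G\) is not a harmless normalization but a change of statement. Second, and more decisively, the proposed limit is not even an element of \(\diff{1}{\US}\). An orientation-preserving real-linear map \(A(z)=\alpha z+\beta\bar z\) with \(\beta\neq 0\) does not extend to a \(C^1\) diffeomorphism of \(\US\): in the chart \(w=1/z\) one finds \(\tilde A(w)=\abs{w}^2/(\alpha\bar w+\beta w)\), whose directional derivative at \(w=0\) along the ray of argument \(\theta\) equals \(1/(\alpha \e{-\ii\theta}+\beta \e{\ii\theta})\), and this cannot equal \(p\e{\ii\theta}+q\e{-\ii\theta}\) for any fixed \(p,q\) unless \(\beta=0\). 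Accordingly the blown-up family does \emph{not} converge in the \(C^1\) topology of the sphere -- its differentials at \(\inff\) stay constant, equal to \(\dif{h'}{\inff}\), while the candidate limit \(A_\phi\) has no differential at \(\inff\) -- and the ``\(2\)-dimensional affine group fixing \(0,1,\infty\)'' you want to place inside \(H\) simply is not a subgroup of \(\diff{1}{\US}\).

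Even waiving those points, the endgame is incomplete: the maps \(z\mapsto\alpha z+\beta\bar z\) with \(\alpha+\beta=1\) fix every point of the real axis, so their orbits are the two open half-planes together with singletons on \(\RR\); the orbits are not all open and the clopen argument does not close on its own. This is exactly the phenomenon the paper addresses with the \hyperref[lemma:crlemma]{Crossing Lemma} and \hyperref[cor:outofgamma]{Corollary \ref{cor:outofgamma}}, which exist precisely to push points off the reference meridian \(\Gamma\). The preliminary reduction \(G_0\supsetneq\mob{\US}\) via normalizer rigidity is also asserted rather than proved (it is true, but nowhere established in the paper). The paper sidesteps all of these difficulties by never leaving \(G\): the \hyperref[lemma:extlemma]{Extension Lemma} converts the same first-order non-conformality into an \emph{isotopy inside \(G_2\)} whose time-one differential at \(\0\) is a hyperbolic saddle, and the \hyperref[lemma:fundamentallemma]{Fundamental} and \hyperref[lemma:crlemma]{Crossing} Lemmas then build isotopies in \(G_3\) whose trajectories form a separating continuum \(\chi\); transitivity of \(G_3\) off \(\roost{\0,\1,\inff}\) follows from a purely topological separation argument, not from a Lie-group orbit count. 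The Extension Lemma is the correct, in-group, replacement for the blow-up you had in mind.
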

	
	Above, the stronger concept of \emph{arc-transitivity} is introduced. Although more precisely defined in the text to follow, it essentially means that not only ordered lists \(\pts{p_1, \ldots, p_4}\) and \(\pts{q_1, \ldots, q_4}\) of four points can be mapped one onto another via a single transformation in the group \(G\), each \(q_i\) is actually the endpoint of \(p_i\)'s trajectory under a isotopy in \(G\) starting at the identity map.
	
	A straightforward application of Theorem A, using both the techniques developed in its proof and some basics of Nielsen-Thurston classification theory, is the following result.  It could also be derived with Theorem A and an abstract result from \cite{TalKwakkel14}).
	
	\begin{thmb} \label{thm:thmb}
			Let \(G \subset \diff{1}{\US}\) be a proper extension of \(\mob{\US}\). Then, \(G\) contains an element \(f\) fixing at least 4 points, and such that the restriction of \(f\) to the complement of these points is isotopic to a pseudo-Anosov map relative to these 4 points. In particular, \(f\) has strictly positive topological entropy.
	\end{thmb}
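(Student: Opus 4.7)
The plan is to produce an element $f \in G$ that fixes four points and induces a pseudo-Anosov mapping class on the complement; the strictly positive topological entropy then follows from the classical fact that pseudo-Anosov maps have entropy at least $\log \lambda > 0$, where $\lambda$ is the dilatation.

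To set things up, I would fix an unordered generic $4$-tuple $\{p_1, p_2, p_3, p_4\} \subset \US$---one whose cross-ratio is neither real nor a root of unity, say---so that the subgroup of $\mob{\US}$ stabilizing this set realizes only the Klein four-group of permutations of the $p_i$. Since a $4$-cycle is not in the Klein four-group, no Möbius map can induce a $4$-cycle on this $4$-tuple; on the other hand, Theorem A furnishes an element $g \in G_0$ realizing precisely a $4$-cycle on the $4$-tuple, and the iterate $f \eqdef g^4 \in G$ then fixes each $p_i$ individually.

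It remains to analyze the isotopy class $[f]$ in the mapping class group of the $4$-punctured sphere $\US \setminus \{p_1, \ldots, p_4\}$ via the Nielsen--Thurston trichotomy. The periodic case is handled by Nielsen realization: a finite-order representative would be a conformal sphere automorphism fixing the four points, and tracing this back through $f = g^4$ together with the constraint that no Möbius map induces a $4$-cycle on our $4$-tuple should yield a contradiction. The reducible case---where $[f]$ preserves an essential simple closed curve separating the four points into two pairs---is the main difficulty.

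To rule out the reducible case, I would exploit the freedom in the choice of $g$: arc $4$-transitivity offers many elements of $G_0$ realizing the same $4$-cycle, differing by elements of the identity-component stabilizer of the $4$-tuple. Modifying $g$ by such an element changes $[f]$ by a corresponding pure mapping class, and the pure mapping class group of the $4$-punctured sphere is a free group of rank two, in which pseudo-Anosov classes abound and reducible ones form a thin subset. A sufficiently generic perturbation of $g$ therefore pushes $[f]$ off the reducible locus, completing the proof. The main obstacle is establishing enough flexibility---ideally that $g$ can be multiplied by a prescribed element of the pure mapping class group---which should follow from the local-isotopy techniques developed in the proof of Theorem A; alternatively, as the excerpt indicates, the abstract result from \cite{TalKwakkel14} provides a more conceptual shortcut yielding positive-entropy elements in $G$ directly, bypassing the hands-on Nielsen--Thurston analysis altogether.
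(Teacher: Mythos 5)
Your overall shape is right---apply Nielsen--Thurston to an element of $G$ fixing four points and show the pseudo-Anosov case must occur---but the two places where you must rule out the other cases contain genuine gaps, and this is precisely where the paper does real work.

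On the periodic case: your argument that a finite-order representative of $[g^4]$ (or $[g]$) forces a contradiction with ``no Möbius map induces a $4$-cycle'' does not close. Nielsen realization gives a conformal automorphism for \emph{some} complex structure on the four-punctured sphere, not a Möbius transformation of the round sphere, so the cross-ratio constraint on $\{p_1,\ldots,p_4\}$ does not directly apply. More basically, nothing yet excludes $g$ itself being isotopic rel $P$ to a finite-order map; for instance $g$ could a priori act like an order-four rotation in the mapping class group, and then $g^4$ would be isotopically trivial rel $P$.

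On the reducible case, which you rightly flag as the main difficulty, the proposed fix is flawed at the key step. You suggest modifying $g$ by elements of the ``identity-component stabilizer of the $4$-tuple'' and claim this changes $[f]$ by a corresponding pure mapping class. But an element that lies in the identity component of the pointwise stabilizer of $P$ is, by definition, joined to the identity by an isotopy fixing $P$; it therefore represents the \emph{trivial} element of the pure mapping class group, and composing $g$ with it leaves $[f]$ unchanged. What would actually be needed is that the stabilizer $G_3'$ of the $4$-tuple in $G$ surjects onto (or at least has rich image in) the pure mapping class group of the four-punctured sphere---but that is a strong statement requiring proof, and is not a consequence of arc $4$-transitivity alone. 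Merely invoking ``genericity'' in a rank-two free group does not help because you have not shown you have \emph{any} freedom to move $[f]$.

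The paper avoids both difficulties by a different device: it assembles an $\mathcal{I}G_3$-isotopy $\seq{F_t}{t\in\II}$ whose time-one map $f$ fixes a four-point set $P = \{\wh,\0,\1,\inff\}$ \emph{and} such that the trajectory $\orb[F]{\wh}$ is a topological figure $8$ in $\pi_1(\US\setminus\{\0,\1,\inff\};\wh)$. Having this explicit homotopically nontrivial trajectory lets one argue directly that no power $f^m$ is isotopic to the identity rel $P$ and that no reducing curve can be preserved, since a reducing curve separating two pairs of points of $P$ would have to enclose the figure $8$ in a way incompatible with being freely homotopic to its image under $F_t^m$. This is the ingredient missing from your plan: some verified topological data forcing the isotopy class to be neither periodic nor reducible. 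Without something playing the role of the figure $8$, the Nielsen--Thurston trichotomy alone tells you nothing.
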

	
	Finally, we also include an interesting result -- albeit not new -- steaming from the same sort of techniques developed in the proofs of Theorems A and B. It is stated in terms of the general definition given below, which plays a key role in this paper.
	
	\begin{defi} \label{defi:trans}
		Given \(k \in \NN\), the action of a subgroup \(G \subset \homeo{M}\) is said to be \emph{\(k\)-transitive} if, for every pair of \(k\)-tuples \(\pts{p_1, \ldots, p_k}\) and \(\pts{q_1, \ldots, q_k}\), each of them composed by mutually distinct points, there exists some transformation \(g \in G\) such that \(q_i = g(p_i)\) for each \(i \in \roost{1, \ldots, k}\). When it holds, in addition, that for such a given $k$-tuple the only transformation fixing its every point is the identity map, the group is said to be \emph{sharply \(k\)-transitive}.
	\end{defi}	
	
	It is a widely known fact of complex analysis that \(\mob{\US}\) is sharply 3-transitive.  As it turns out, this is a \emph{defining property} of such group: in their 2014 work, Kwakkel and the second author show that any sharply \(3\)-transitive subgroup of \(\diff{1}{\US}\) extending the rotations group \(\rot{\US}\) must be \(\mob{\US}\). For completeness, and given that the cited work has not been published, we also present herein a new simple -- and in our opinion rather enjoyable -- direct proof of this fact, formally stated below.
	
	\begin{thmc} \label{thm:thmc}
		Let \(G \subset \diff{1}{\US}\) be a homogeneous group of diffeomorphisms. That is, \(G\) is an extension of the rotations group \(\rot{\US}\). Then, \(G\) sharply 3-transitive implies \(G=\mob{\US}$.
	\end{thmc}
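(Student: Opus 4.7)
The plan is first to establish the inclusion $\mob{\US} \subseteq G$; once this is done, the equality $G = \mob{\US}$ follows from the uniqueness principle: \emph{if $G_1 \subseteq G_2$ are both sharply $3$-transitive subgroups of $\homeo{\US}$, they coincide}. Indeed, any $g \in G_2$ is matched on a reference triple by a unique $g_1 \in G_1$, so $g_1^{-1}g \in G_2$ fixes the triple and equals the identity by sharpness of $G_2$; hence $g = g_1 \in G_1$. The task thus reduces to producing every M\"obius transformation inside $G$.

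To do so, I plan to analyze the identity component $G_0 \subseteq G$. Since $\rot{\US} = SO(3)$ is connected and already lies in $G$, it is contained in $G_0$, and I first want to show this inclusion is strict. The identity component is always characteristic in $G$, so $G$ normalizes $G_0$. On the other hand, the normalizer of $\rot{\US}$ inside the orientation-preserving diffeomorphism group of $\US$ is $\rot{\US}$ itself: $SO(3)$ has trivial center and trivial outer automorphism group, so any normalizing conjugation is inner, and its centralizer in orientation-preserving diffeomorphisms is trivial --- the only orientation-preserving $SO(3)$-equivariant self-map of $\US$ is the identity, since the antipodal map is orientation-reversing on $S^2$. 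Consequently, if $G_0$ equalled $\rot{\US}$, then $G$ itself would lie inside $\rot{\US}$, contradicting sharp $3$-transitivity since $\rot{\US}$ is not even $3$-transitive. Therefore $G_0 \supsetneq \rot{\US}$.

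I will then invoke the classical classification of transitive Lie group actions on $\US$, going back to S.~Lie and completed by Montgomery--Zippin: the only connected Lie subgroups of $\diff{1}{\US}$ acting transitively on $\US$ are $\rot{\US}$ and $\mob{\US}$. Together with the strict containment $G_0 \supsetneq \rot{\US}$, this yields $G_0 = \mob{\US}$, producing $\mob{\US} \subseteq G$ and, by the uniqueness principle, $G = \mob{\US}$. The hardest step will be justifying that $G_0$ is a Lie group. My plan is to combine sharp $3$-transitivity --- which makes $G$ a continuous injection onto the open $6$-dimensional manifold of distinct triples in $\US^{3}$ via evaluation on a reference triple, giving $G_0$ local compactness --- with the Hilbert--Smith/Bochner--Montgomery theorem, available in the $C^1$-regime for low-dimensional manifolds. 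The $C^1$-hypothesis is genuinely essential here: Zassenhaus's classification of abstract sharply $3$-transitive groups admits exotic examples built from non-commutative near-fields, precisely the ones eliminated by the differentiability assumption.
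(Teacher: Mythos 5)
Your proposal takes a genuinely different route from the paper's elementary argument, and the overall architecture is elegant, but one step contains a genuine gap.

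The opening reduction via the uniqueness principle for sharply $3$-transitive subgroups is clean and correct, and the normalizer argument showing that $G_0 \supsetneq \rot{\US}$ is also sound: the identity component is preserved under conjugation, $\mathrm{Out}(SO(3))$ is trivial, and the only orientation-preserving $SO(3)$-equivariant self-map of $\US$ is the identity, so indeed $N_{\diff{1}{\US}}\!\left(\rot{\US}\right) = \rot{\US}$. Once $G_0$ is known to be a connected Lie group acting transitively on $\US$, the classification of such actions (Montgomery--Zippin; for $S^2$ the effective transitive ones are $SO(3)$ and $\PSL{2}{\CC}$) would finish the job.

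The gap is in the justification that $G_0$ is a Lie group. You argue that, because the evaluation map $\mathrm{ev}\colon g \mapsto (g(p_1), g(p_2), g(p_3))$ is a continuous bijection onto the open $6$-manifold of distinct triples, $G_0$ is locally compact. This inference is not valid: a continuous bijection from a topological group onto a locally compact space does not make the group locally compact in its own (subspace $C^1$) topology, because the inverse map need not be continuous. For that you would need $\mathrm{ev}$ to be a homeomorphism onto its image, which amounts to saying that $C^1$-convergence of $g_n$ is detected by the images of three points --- exactly the kind of rigidity you are trying to prove, not something that comes for free. Notice moreover that the theorem's hypothesis does \emph{not} assume $G$ is closed in $\diff{1}{\US}$; without a closedness assumption, $G$ could a priori be a dense, non-locally-compact subgroup, and neither Hilbert--Smith nor Bochner--Montgomery applies. (Giblin--Markovic's circle classification, which uses the same kind of Lie-theoretic endgame, explicitly assumes closedness for this reason.) By contrast, the paper sidesteps this entirely: it never shows $G_0$ is a Lie group. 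It instead proves, via an elementary Jordan-curve argument combined with sharp $3$-transitivity, that the pole-stabilizing subgroup $G_2$ carries parallels to parallels, deduces conformality of the differential at the fixed poles, and then uses $3$-transitivity and a limiting argument with the chain rule to force conformality of every element of $G$ at every point. That approach uses no machinery beyond the Jordan Curve Theorem and first-year calculus, and applies directly to non-closed $G$. To repair your proposal you would either need to add a closedness hypothesis (weakening the theorem) or supply an independent argument that the evaluation map is proper or open, neither of which is evident.

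Your closing remark that the $C^1$ hypothesis is essential --- because Zassenhaus-type sharply $3$-transitive groups built from near-fields would otherwise furnish counterexamples --- is a nice observation and aligns with the spirit of the paper.
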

	
\subsection{Preliminaries}

Let us start by agreeing upon terminology. As aforementioned, \(\homeo{M}\) denotes the set of orientation preserving homeomorphisms of a closed and oriented topological manifold \(M\), on which the uniform metric
	\begin{equation*}
		\infdist{f}{g} = \max_{p \in M} \roost{ d \pts{f(p), g(p)} }
	\end{equation*}
	is well defined. Above, \(d\) is some distance function generating the manifold's topology. If \(d\) comes from a Riemannian structure on \(M\), we may also consider the subset \(\diff{1}{M} \subset \homeo{M}\) of orientation preserving diffeomorphisms of class \(\reg{1}\) and its finer \emph{\(\reg{1}\) topology} -- or \emph{compact-open} -- which takes into account the local expressions of differentials as well.
	
	Consider the unit circle, \(M = \UC\). It can be thought of as the set of complex points at unit distance from the origin or as the compactification \(\RR \cup \roost{\infty}\). In the latter case, identification is provided by stereographic projection from the point \((0, 1)\). Each such description is linked to a canonical group acting on \(\UC\).
	
	In the first case, we have the \emph{rotations group}, \(\rot{\UC}\). This group is naturally identified with the circle itself, the rotation of angle \(\alpha\) being given as \(z \mapsto \e{2 \pi \ii \alpha} z\). It is thus a compact Lie group. In the second case, we have the \(\emph{Möbius group}\), \(\mob{\UC}\), obtained by transposing to the circle the action of \(\PSL{2}{\RR}\) on the extended real line by \emph{linear fractional transformations}.
	
	The rotations group can be realized as a proper subgroup of the Möbius group. Interestingly enough, these two groups yield a full description of the closed and transitive subgroups of \(\homeo{\UC}\), which is the content of the theorem by Giblin and Markovic.
	
	\begin{thm}[Giblin and Markovic, \cite{GiblinMarkovic06}]
		Let \(G\) be a closed and transitive subgroup of \(\homeo{\US}\) containing a nontrivial path connected component. Then, it is conjugate to one, and only one, of the following:
		\begin{enumerate}[(i)]
			\item \(\rot{\UC}\),
			\item \(\mob[k]{\UC}\),
			\item or \(\homeo[k]{\UC}\).
		\end{enumerate}
		Above, the subscript \(k \in \NN\) indicates cyclic covering of order \(k\). Furthermore, \(\mob{\UC}\) is maximal among the closed and transitive groups of \(\homeo{\UC}\). This means that there is no such a subgroup properly containing \(\mob{\UC}\) and properly contained in \(\homeo{\UC}\).
	\end{thm}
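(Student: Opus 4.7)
The approach divides naturally into three stages: reducing \(G_0\) to a Lie group, classifying transitive connected Lie group actions on the circle, and passing from \(G_0\) to \(G\) itself via normalizer considerations.

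First, since \(G\) is closed inside the Polish group \(\homeo{\UC}\), its identity component \(G_0\) is a closed subgroup, and the path-connectedness hypothesis guarantees \(G_0 \neq \{\id\}\). The central input here is the Hilbert--Smith conjecture in dimension one (classical) together with the Bochner--Montgomery--Zippin structure theorem: any locally compact topological group acting continuously and effectively on a 1-manifold is necessarily a Lie group. The delicate point is certifying that \(G_0\) is locally compact, which follows from equicontinuity estimates on its one-parameter subgroups, the nontriviality of the path component providing a bona fide continuous arc from which such estimates can be extracted.

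Second, with \(G_0\) a connected Lie group acting transitively on \(\UC\), I would invoke the classical Lie--Cartan classification: every finite-dimensional Lie algebra of vector fields on \(S^1\) embeds into \(\mathfrak{sl}(2,\mathbb{R})\). Integrating the admissible subalgebras and discarding those whose exponential is not transitive on the circle (for instance, the two-dimensional affine subalgebra fixes a point at infinity), one is left with only two possibilities up to topological conjugacy: \(\rot{\UC}\) in the one-dimensional case, or a \(k\)-fold cover of \(\PSL{2}{\RR}\) acting on the circle, which is precisely \(\mob[k]{\UC}\), in the three-dimensional case.

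Third, one must upgrade the classification from \(G_0\) to \(G\). Since conjugation is continuous, every \(g \in G\) normalizes \(G_0\), so \(G\) embeds into the normalizer \(N_{\homeo{\UC}}(G_0)\). When \(G_0\) is conjugate to \(\rot{\UC}\), the orientation-preserving normalizer inside \(\homeo{\UC}\) collapses back to \(\rot{\UC}\), forcing \(G=\rot{\UC}\). When \(G_0\) is conjugate to \(\mob[k]{\UC}\), the normalizer is exactly \(\homeo[k]{\UC}\), placing \(G\) between these two; a topological-dynamics argument on \(\UC\), leveraging transitivity together with closedness, then forces \(G\) to equal one of the two extremes, yielding the trichotomy.

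The principal difficulty, as I see it, lies squarely in the first stage --- certifying that \(G_0\) is a Lie group at all. Passing from ``closed with nontrivial path component inside \(\homeo{\UC}\)'' to ``locally compact and finite-dimensional'' is not a formality, and requires genuine transformation-group machinery (equicontinuity, uniform moduli of continuity on isotopies) before the Bochner--Montgomery--Zippin apparatus can be brought to bear. The maximality claim for \(\mob{\UC}\) then follows once the trichotomy is in hand, since no member of the list strictly contains \(\mob{\UC}\) while being strictly contained in \(\homeo{\UC}\).
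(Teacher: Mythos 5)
This statement is quoted verbatim from Giblin and Markovic; the paper does not supply a proof, so there is no in-paper argument to compare against. Reviewing your proposal on its own terms, there is a structural flaw that undermines the whole first stage.

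Your plan opens by asserting that \(G_0\) is locally compact and hence, via Bochner--Montgomery--Zippin and Hilbert--Smith in dimension one, a Lie group. But the theorem's own conclusion includes the case \(G\) conjugate to \(\homeo[k]{\UC}\); in particular, \(G = \homeo{\UC}\) itself satisfies every hypothesis (closed, transitive, path connected), and its identity component is the entire group, which is not locally compact and not a Lie group. So ``closed with nontrivial path component'' cannot imply local compactness, and the promised ``equicontinuity estimates on one-parameter subgroups'' do not exist in general: even inside \(\mob{\UC}\), sequences of hyperbolic or parabolic elements exhibit no equicontinuity. A correct argument along these lines would have to be a genuine dichotomy --- if \(G_0\) is locally compact, classify it as a Lie group; if not, argue that it must already be \(\homeo[k]{\UC}\) --- but you present only one horn as if it were a theorem, and the second horn (the one actually responsible for the \(\homeo[k]{\UC}\) case) is where the real work lies. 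Giblin and Markovic in fact organize their proof around degrees of transitivity (roughly: 1-transitive but not 2-transitive forces a conjugate of a rotation cover; exactly 2-transitive forces a Möbius cover; 3-transitive forces a homeomorphism cover), not around a Lie reduction, precisely because the homeomorphism case has no finite-dimensional structure to exploit.

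A smaller point: your closing remark that maximality of \(\mob{\UC}\) ``then follows since no member of the list strictly contains \(\mob{\UC}\) while being strictly contained in \(\homeo{\UC}\)'' glosses over the fact that the trichotomy is only up to conjugation. One must rule out the possibility that some conjugate of \(\mob[k]{\UC}\) with \(k \geq 2\), or of \(\rot{\UC}\), strictly contains \(\mob{\UC}\) --- easy for dimension reasons in the latter case, but requiring a short dynamical or fixed-point argument in the former. As written the claim is a non sequitur rather than a deduction.
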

	
	Consider now the unit 2-sphere, \(M = \US\). It can be thought of either as the points of Euclidean 3-space \(\RR^3\) at unit distance from the origin or as the compactification \(\CC \cup \roost{\inff}\). In the latter case, identification is provided by stereographic projection from the North Pole \((0, 0, 1)\). Points on the plane and their stereographic images will be denoted by the same letters and confounded without notice. Accordingly, any sphere mapping that fixes \(\inff\) defines, by stereographic conjugation, a mapping of the plane with the same degree of regularity, also denoted by the same letter.
	
	The group of \emph{sphere rotations}, \(\rot{\US}\), consists of Euclidean isometries preserving \(\US\). It is realized by the orthogonal matrices \(\SO{3}{\RR}\), and is thus also a compact Lie group. Each such transformation amounts to prescribing an axis and a rotation angle around that axis. From these two facts, it can be seen that \(\rot{\US}\) is closed and transitive.
	
	Rotations are \emph{minimal} among transitive groups in the following sense: any compact subgroup \(G \subset \textrm{Homeo}(\US)\) is conjugate to a closed subgroup of \(\rot{\US}\) -- a result known to B. Kerékjártó since the 40s, and for which a proof according to contemporary standards is given by B. Kolev in \cite{Kolev06}. If \(G\) is also transitive, then it is conjugate to \(\rot{\US}\). The problem of classifying the groups contained between \(\rot{\US}\) and \(\homeo{\US}\) is thus called the \emph{kernel subgroup problem}, and such groups are hereafter called \emph{homogeneous}, following the terminology established in \cite{TalKwakkel14}.
	
	The \emph{Möbius group} \(\mob{\US}\) is an example of a homogeneous group. It is defined similarly to its circle counterpart, by the action of \(\PSL{2}{\CC}\) on the extended plane via linear fractional transformations. More precisely, one associates to (the class of) the matrix \(A\) a mapping \(M_A\)  as follows:
	\begin{equation*}
		\text{if } A = \begin{pmatrix} a & b \\ c & d \end{pmatrix} \in \PSL{2}{\CC} \text{ , then } M_A (z) = \frac{a \, z + b}{c \, z + d} \text{ for every } z \in \CC \cup \roost{\infty} \pt
	\end{equation*}
	
	This procedure not only induces a group of orientation preserving homeomorphisms of the sphere, it characterizes the \emph{conformal diffeomorphisms} of \(\US\) endowed with its canonical smooth structure. The Möbius group is thoroughly understood: each transformation has either one or two fixed points, and is conjugated to one out of four prototypical mappings -- called parabolic, elliptical, hyperbolic or loxodromic -- depending on the relation between the coefficients \(a\) and \(d\). In particular, rotations of the sphere are induced by the subgroup \(\PSU{2}{\CC}\). 
	
	These are classical results, some of them already known to Gauss, that can be found, for instance, in Chapter 3 of \cite{Needham00}. Kwakkel and the second author left open the question of whether the inclusion of \(\mob{\US}\) in \(\homeo{\US}\) is maximal. Our contribution to this problem was stated in \hyperref[thm:thma]{Theorem A} above. It should be stressed that, in the circle setting, 4-transitivity is sufficient to ensure \(k\)-transitivity for any \(k \geq 4\) (Theorem 6.4 in \cite{GiblinMarkovic06}).
	
	The constructions in this paper are mostly based upon \emph{isotopies}, here understood as jointly continuous maps \(f \colon I \times M \to M\), where \(I\) is a (possibly unbouded) real interval, such that the functions \(f_t = f(t, \!\cdot)\) are homeomorphisms of \(M\). We can either prescribe \(f\) or a family \(\seq{f_t}{t \in I}\) for which \(t \mapsto f_t\) is continuous with respect to \(d_{\infty}\). When the interval in question is the \emph{standard unit interval}, we denote it by \(\II = \clint{0,\!1}\).
	
	It will often be the case that \(f_t \in \diff{1}{M}\) for every \(t \in I\), but \(t \mapsto f_t\) can only be assured to be continuous with respect to \(d_{\infty}\). To avoid confusions, isotopies for which this association is actually continuous with respect to the \(\reg{1}\)-topology will be explicitly referred to as \emph{\(\reg{1}\)-isotopies}.
	
	\begin{defi} 
		Let \(G \subset \homeo{M}\). An isotopy \(\seq{f_t}{t \in I}\) such that \(0 \in I\), \(f_0 = \id\) and \(f_t \in G\) for every \(t \in I\) will be referred to as an \emph{\(\mathcal{I}G\)-isotopy}. By letting \(\sup I = + \infty\) and \(\inf I = - \infty\) whenever suitable, we then define:
		\begin{enumerate}[\bfseries (1)]
			\item The \emph{trajectory} of a point \(x \in M\) under \(f\) as \(\orb[f]{x} = \set{f_t (x)}{t \in I}\). Although this is \textit{a priori} just a set, it will often be thought of as a curve oriented according to its natural direction of travel.
			\item Its \(\omega\)-\emph{limit} as the following set of accumulation points:
			\begin{equation*}
				\omegalim{x}{f} = \set{y \in M}{ \text{ there exists some sequence } t_n \nearrow \sup I \text{ such that } f_{t_n}(x) \to y } \pt
			\end{equation*}
			A similar notion of \(\alpha\)-\emph{limit} is defined for sequences such that \(t_n \searrow \inf I\).
		\end{enumerate}
	\end{defi}
	
	We can now introduce the concept of arc transitivity, mentioned in \hyperref[thm:thma]{Theorem A}.
	
	\begin{defi} \label{defi:arctrans}
		Given \(k \in \NN\), a subgroup \(G \subset \homeo{M}\) is said to be \emph{arc \(k\)-transitive} if, for every pair of \(k\)-tuples as in \hyperref[defi:trans]{Definition} \ref{defi:trans}, there exists an \(\mathcal{I}G\)-isotopy \(\seq{g_t}{t \in \II}\) such that \(g_1(p_i) = q_i\) for each \(i \in \roost{1, \ldots, k}\).
	\end{defi}	
	
	The suggestive terminologies above are useful in the developments that follow. Although analogies with flows are limited, the following establishes some parallels.
	
	\begin{defi} \label{defi:gequiv}
		For a fixed a subgroup \(G \subset \homeo{M}\) and given \(z,w \in M\), say that
		\begin{equation*} 
			z \gequiv w \iff \text{ there exists an \(\mathcal{I}G\)-isotopy \(\seq{f_t}{t \in \II}\) in \(G\) such that \(f_1 (z) = w\).}
		\end{equation*}
		This is an equivalence relation, under which the class of a point \(z \in M\) is denoted by \(\acc{z}\), and referred to as the \emph{points accessible from \(z\)} (in \(G\)).
	\end{defi}
	
	\begin{lemma} \label{lemma:equivcross}
		Let \(\seq{f_t}{t \in I}\) and \(\seq{h_t}{t \in J}\) be \(\mathcal{I}G\)-isotopies, where \(I\) and \(J\) are intervals of any kind. Then, for any two \(z,w \in M\) such that \(\orb[f]{z} \cap \orb[h]{w} \neq \emptyset\) we have that \(\acc{w} = \acc{z}\).
	\end{lemma}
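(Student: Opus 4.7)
The plan is to exhibit an \(\mathcal{I}G\)-isotopy on \(\II\) whose time-one map sends \(z\) to \(w\); this proves \(z \gequiv w\), and \(\acc{z} = \acc{w}\) then follows since \(\gequiv\) is an equivalence relation. The only concrete datum provided by the hypothesis is a pair \(t \in I\), \(s \in J\) with \(f_t(z) = h_s(w)\).

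I would construct the desired isotopy by concatenating two pieces on \(\II\). The first is the reparametrization \(\alpha_u = f_{tu}\), an \(\mathcal{I}G\)-isotopy with \(\alpha_0 = \id\) and \(\alpha_1 = f_t\), which transports \(z\) along \(\orb[f]{z}\) to \(f_t(z)\). The second must move \(f_t(z) = h_s(w)\) back to \(w\) while still starting at the identity; the naive choice \(h_{s(1-u)}\) would begin at \(h_s\), so I would instead use \(\beta_u = h_{s(1-u)} \circ h_s^{-1}\), which satisfies \(\beta_0 = \id\) and \(\beta_1 = h_s^{-1}\) and lies in \(G\) throughout because \(G\) is a group. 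Continuity in \(d_\infty\) is inherited from that of \(h\). Splicing the two pieces in the standard way yields
\[
g_u = \begin{cases} \alpha_{2u}, & u \in [0,1/2], \\ \beta_{2u-1} \circ f_t, & u \in [1/2,1], \end{cases}
\]
whose halves match at \(u = 1/2\) with common value \(f_t\), and whose time-one map \(g_1 = h_s^{-1} \circ f_t\) sends \(z\) to \(h_s^{-1}(h_s(w)) = w\), as required.

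I do not anticipate a genuine obstacle: the whole argument is essentially path surgery in the topological group \(G\). The one point worth flagging is the reversal trick defining \(\beta\) -- this is the single step at which the group structure of \(G\) (as opposed to merely closure under composition) is actually used, via the inverse \(h_s^{-1}\). Everything else is elementary concatenation of continuous paths together with the observation that reparametrizations of an \(\mathcal{I}G\)-isotopy by maps \(\II \to I\) sending \(0\) to \(0\) remain \(\mathcal{I}G\)-isotopies.
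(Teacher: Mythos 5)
Your proof is correct and follows essentially the same approach as the paper: both arguments reparametrize a segment of $f$ to carry $z$ onto the meeting point, then exploit the group structure via the reversal $h_{s(1-u)}\circ h_s^{-1}$ to continue from the identity back to $w$. The only cosmetic difference is that you stop at $z \gequiv w$ and invoke transitivity of $\gequiv$, whereas the paper appends a third piece to directly produce an isotopy carrying $z$ to an arbitrary $y\in\acc{w}$.
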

	\begin{proof}
		Say that \(f_a (z) = h_b (w)\), for some \(a \in I\), \(b \in J\). Given \(y \in \acc{w}\), there exists an \(\mathcal{I}G\)-isotopy \(\seq{g_t}{t \in \II}\) such that \(g_1 (w) = y\). Then,
		\[
			k_t = \begin{cases}
				f_{3a t} & \text{if } 0 \leq t \leq \dfrac{1}{3} \vg \\[0.75em]
				h_{\pts{3t - 2} b} \circ h_b^{-1} \circ f_a & \text{if } \dfrac{1}{3} \leq t \leq \dfrac{2}{3} \vg \\[0.75em]
				g_{3t-2} \circ h_b^{-1} \circ f_a & \text{if } \dfrac{2}{3} \leq t \leq 1 \vg
			\end{cases}
		\]
		is an \(\mathcal{I}G\)-isotopy satisfying \(k_1 (z) = y\). This shows that \(\acc{w} \subset \acc{z}\), and the converse inclusion follows by symmetry.
	\end{proof}
	
	We finish this introduction by fixing some reference points that play a key role in the arguments that follow:
	\begin{itemize}
		\item \(\inff\) is the North Pole,
		\item \(\0\) is the South Pole, and corresponds to the plane's origin,
		\item \(\1\) is the point \((1,0,0)\), which corresponds to its counterpart on the real axis.
	\end{itemize}  
	
	The meridian through \(\0\), \(\1\) and \(\inff\) -- which is the stereographic image of the \(x\)-axis (or real axis) -- is denoted by \(\Gamma\). This meridian defines on its left a \emph{western hemisphere} \(\hp\), corresponding to the upper half-plane, and on its right an \emph{eastern hemisphere} \(\hm\), corresponding to the lower half-plane. For any group \(G\) we also adopt the following notations for stabilizers:
	\begin{equation*}
		 G_1 = \stab{\inff}{G} \; \vg \; G_2 = \stab{\0, \inff}{G} \; \aand \; G_3 = \stab{\0, \1, \inff}{G} \pt
	\end{equation*}
	
\subsection{Paper outline}

	Given a group \(G\) extending \(\mob{\US}\), we consider the subgroups \(G_k\), \(1 \leq k \leq 3\), as above. Our objective is to conclude that \(G_3\) is (one) transitive.
	
	We begin by establishing an \hyperref[lemma:extlemma]{Extension Lemma} at the end of \hyperref[sec:ext]{Section \ref{sec:ext}}, which states that \(G_2\) must contain an isotopy between the identity and a map having a hyperbolic saddle point at the (fixed) South Pole. This is achieved by starting with a nonconformal map and continuously parametrizing rotations and homotheties, both of which are Möbius transformations, in such a way as to create eigendirections and modulate the corresponding eigenvalues. 
	
	From this starting point, we fix the  privileged reference meridian \(\Gamma\) and promote two parallel processes. In \hyperref[sec:fund]{Section \ref{sec:fund}}, we prove that points outside of it admit full time isotopies in \(G_3\) accumulating at the poles. In \hyperref[sec:cross]{Section \ref{sec:cross}}, we prove that there is a finite time isotopy in \(G_3\) for which some point on this meridian starts at one side of it and ends at the other side.
	
	In \hyperref[sec:closing]{Section \ref{sec:closing}} we show our main theorems. \hyperref[thm:thma]{Theorem A} is derived upon combining isotopies of the types described and concluding that all but three points of the sphere are actually arc connected in the sense of \hyperref[defi:arctrans]{Definition \ref{defi:arctrans}}, which yields arc transitivity of \(G_3\). \hyperref[thm:thmb]{Theorem B} is also derived by convenient combinations of segments of such isotopies, but to produce a ``topological figure 8'', a device that implies positive entropy due to the Nielsen-Thurston classification theory.
	
	Lastly, in \hyperref[sec:appendix]{Section \ref{sec:appendix}} we derive \hyperref[thm:thmc]{Theorem C}. First, a purely topological argument shows that, if \(G\) is a sharply \(3\)-transitive and homogeneous group of homeomorphisms, then \(G_2\) must permute parallels. This fact, when combined with differentiability, yields conformality -- first at the poles, and then at every point.
	
\section{Extensions of Möbius} \label{sec:ext}

	\begin{defi}
		For a given \(0 \leq \theta < 2 \pi\), \(R_{\theta}\) denotes the counterclockwise rotation of angle \(\theta\)
		\begin{itemize}
			\item around the \(z\)-axis, if it is thought of as a mapping of the 2-sphere,
			\item around the origin, if it is thought of as a planar mapping. In this case, it may be written as \(R_{\theta} (z) = \e{2 \pi \ii \theta} \, z\).
		\end{itemize}
		Also, for a given \(\rho > 0\), we let \(H_{\rho} \in \mob[2]{\US}\) denote the transformation induced by the plane homothety \(z \mapsto \rho z\). Since the tangent space \(\tanspace{\0}{\US}\) can be identified with the horizontal subspace \(\RR^2 \times \roost{0} \simeq \RR^2\), generated by the canonical basis \(\{\ddx, \ddy\}\), we may write, in a slight abuse of notation:
	\[
		\dif{R_{\theta}}{\0} = R_{\theta} \quad \aand \quad \dif{H_{\rho}}{\0} = \rho \, \id \, \pt
	\]
	\end{defi}
	
	We remark that, if \(u,v\) are nonzero vectors, the \emph{angle} \(0 \leq \ang{z,w} \leq \pi\) between them is defined in the usual way, as \(\arccos \col{ \iprod{u}{v} / \pts{ \abs{u} \abs{v} } }\).
	
	\begin{lemma} \label{lemma:saddle}
		Let \(G \subset \diff{1}{\US}\) be a proper extension of \(\mob{\US}\). Then, there exists \(\gh \in G_2\) for which \(\0\) is a hyperbolic saddle point. More precisely, \(\dif{\gh}{\0} = \diag{\lambda, \lambda^{-1}}\) with respect to the canonical basis, where \(0 <\lambda < 1\).
	\end{lemma}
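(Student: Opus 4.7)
The plan is to produce $\gh$ by starting from any non-Möbius element of $G$, using the sharp 3-transitivity of $\mob{\US}$ to normalize its fixed points, and then pre- and post-composing with rotations and a homothety --- all Möbius and hence in $G$ --- to groom the derivative at $\0$ into the required diagonal form with reciprocal eigenvalues. The main obstacle is the very first step: showing that the non-Möbius hypothesis translates into non-conformality of the differential at some single point. Everything after that is essentially linear algebra, performed inside the stabilizer of the poles and exploiting the fact that Möbius transformations fixing $\0$ and $\inff$ contribute precisely the conformal linear factors $\rho R_\theta$ needed to isolate the singular-value part of the derivative.

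First I would pick $f \in G \setminus \mob{\US}$ and claim that $f$ must have non-conformal derivative at some point $p \in \US$. This is the classical characterization of Möbius transformations as the $\reg{1}$ self-diffeomorphisms of the Riemann sphere that are conformal everywhere: in stereographic coordinates, a $\reg{1}$ map whose differential is at every point a positive scalar multiple of a rotation satisfies the Cauchy--Riemann equations pointwise and is therefore holomorphic, so the only bijective holomorphic self-maps of $\CC \cup \roost{\inff}$ are Möbius. Writing $q = f(p)$ and choosing any auxiliary $r \neq p$ with $f(r) \neq q$, the sharp 3-transitivity of $\mob{\US}$ supplies $A, B \in \mob{\US}$ with $A(p) = \0$, $A(r) = \inff$, $B(q) = \0$, and $B(f(r)) = \inff$; then $\tilde{f} \eqdef B \circ f \circ A^{-1}$ lies in $G_2$, and since $\dif{A}{p}$ and $\dif{B}{q}$ are conformal linear maps, the derivative $A_0 \eqdef \dif{\tilde{f}}{\0}$ inherits non-conformality from $\dif{f}{p}$.

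With this in hand, I apply the singular value decomposition to $A_0$, writing $A_0 = R_{\theta_1} \Sigma R_{\theta_2}$ with $\Sigma = \diag{\sigma_1, \sigma_2}$, $\sigma_1 > \sigma_2 > 0$ (strict inequality recording non-conformality, positivity recording the positive determinant of an orientation-preserving derivative). Setting $\sbar = \sqrt{\sigma_1 \sigma_2}$, I form
\[
\gh_1 \eqdef H_{1/\sbar} \circ R_{-\theta_1} \circ \tilde{f} \circ R_{-\theta_2} \in G_2,
\]
and the chain rule at $\0$ cancels the outer rotations against the factors of $A_0$ to yield $\dif{\gh_1}{\0} = (1/\sbar)\, \Sigma = \diag{\mu, \mu^{-1}}$ with $\mu = \sqrt{\sigma_1/\sigma_2} > 1$. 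Taking $\gh \eqdef \gh_1^{-1} \in G_2$ then inverts and swaps the diagonal entries, producing $\dif{\gh}{\0} = \diag{\lambda, \lambda^{-1}}$ with $\lambda = \mu^{-1} \in (0, 1)$, exactly as required.
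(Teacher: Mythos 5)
Your proof is correct, and it reaches the conclusion by a genuinely different and cleaner route than the paper. Both arguments start identically: a non-Möbius element of $G$ must fail to be conformal at some point (your justification via pointwise Cauchy--Riemann is the right one), and pre- and post-composing with Möbius transformations moves that point and its image to $\0$ while fixing $\inff$, producing $\tilde f \in G_2$ with a non-conformal derivative $A_0$ at the origin. Where the paper then constructs the desired orthonormal frame by hand --- fixing an eigendirection with a rotation $R_{(1)}$, splitting into the cases where $R_{(1)}A_0$ is defective or diagonalizable, and in each case running an intermediate-value argument on the function $\xi(\phi) = \iprod{A_1 x_\phi}{A_1 y_\phi}/(\abs{A_1 x_\phi}\abs{A_1 y_\phi})$ to locate a pair of orthogonal directions whose images remain orthogonal --- you simply invoke the singular value decomposition $A_0 = R_{\theta_1}\Sigma R_{\theta_2}$, which packages exactly this information (the IVT argument in the paper is, in effect, a from-scratch derivation of the SVD for a $2\times 2$ orientation-preserving matrix). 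Your route is shorter, eliminates the case split, and makes the final normalization $H_{1/\sbar}\circ R_{-\theta_1}\circ \tilde f \circ R_{-\theta_2}$ and the inversion $\gh=\gh_1^{-1}$ entirely transparent via the chain rule. Two small remarks: you invoke sharp $3$-transitivity where plain $2$-transitivity of $\mob{\US}$ already suffices for placing $(p,r)$ and $(q,f(r))$ at $(\0,\inff)$; and it is worth a sentence to note that because $\det A_0 > 0$, the SVD can always be arranged with both outer factors genuine rotations (if both came out with determinant $-1$, post- and pre-multiplying by $\diag{1,-1}$ fixes this without disturbing $\Sigma$). Neither affects correctness.
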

	\begin{proof}
		Since \(G\) is a proper extension of the conformal group, it contains at least one mapping \(\hh\) which is nonconformal at some point of the sphere. By precomposing and postcomposing with suitable Möbius transformations, it can be assumed that such map lies in \(G_2\) (that is, it fixes the poles) and that \(A = \dif{\hh}{\0}\) is nonconformal. 
		 
		This means that there exists a pair of unit vectors \(u,v\) such that \(\ang{Au, Av} \neq \ang{u,v}\). Let \(R_{(1)}\) be a planar rotation such that \(R_{(1)} \pts{Au}\) is a positive multiple of \(u\). Then, \(A_1 = R_{(1)} A\) is a linear mapping having at least one eigenvalue \(\lambda_1 > 0\), for which \(u\) is an unit eigenvector. It cannot be the case that \(\lambda_1\) is of maximal geometric multiplicity, for that would imply \(A\) conformal. This leaves two possibilities.
		
		\begin{case}
			\(A_1\) is a defective matrix.
		\end{case}	
			\noindent Then, we may fix an orthogonal chain of generalized eigenvectors \(\roost{u,w}\). That is, \(A_1 w = u + \lambda_1 w\) and \(\iprod{u}{w} = 0\). For each \(\phi \in \clint{0,\!\pi/2}\) we define
				\[
					x_{\phi} = \cos \phi \; u + \sin \phi \; w / \abs{w} \; \aand \; y_{\phi} = - \sin \phi \; u + \cos \phi \; w / \abs{w} \vg
				\]
				obtaining an orthonormal frame \(\roost{x_{\phi}, y_{\phi}}\) of \(\tanspace{\0}{\US}\). Consider now the continuous function
				\[
					\xi (\phi) = \iprod{A_1 x_{\phi}}{A_1 y_{\phi}} / \abs{ A_1 x_{\phi} } \abs{ A_1 y_{\phi} } \pt 
				\]
			It satisfies \(\xi (0) \phantom{\cdot} \xi (\pi/2) < 0\). We thus have \(\xi (\hphi) = 0\) for some \(0 < \hphi < \pi/2\). This means that, if \(\xh = x_{\hphi}\) and \(\yh = x_{\hphi}\), then the orthonormal frame \(\roost{ A_1 \xh / | A_1 \xh |  , A_1 \yh / \abs{ A_1 \yh } }\) can be applied onto \(\roost{\xh, \yh}\) by a certain planar rotation \(R_{(2)}\), because \(A_1\) preserves orientation. Thus, \(R_{(2)} \pts{A_1 \xh} = \nu_1 \, \xh\) and \(R_{(2)} \pts{A_1 \xh} = \nu_2 \, \yh\), where \(\nu_1, \nu_2 > 0\).
				
				Once again, we must have \(\nu_1 \neq \nu_2\), or \(A\) would be conformal. Let us assume, without loss of generality, that \(0 < \nu_1 < \nu_2\). By defining \(A_2 = R_{(2)} A_1\), \(\rho = 1 / \sqrt{\nu_1 \nu_2}\) and \(\lambda = \sqrt{\nu_1 / \nu_2} < 1\), we have \(\pts{\rho \, A_2} \xh = \lambda \xh\) and \(\pts{\rho \, A_2} \yh = \lambda^{-1} \, \yh\).
				Lastly, let \(R\) be a planar rotation such that \(R^{-1} \, \xh = \partial / \partial x\). Then, \(R^{-1} \, \yh = \eps \; \partial / \partial y\), where \(\eps = 1\) if \(\roost{\xh, \yh}\) is a positive basis, or \(\eps = -1\) otherwise. Defining \(\Ah = \rho \, R^{-1} A_2 R\): 
				\begin{gather*}
					\Ah \pts{\ddx} = \lambda \; \ddx \quad \aand \quad \Ah \pts{\ddy}  = \eps^2 \, \lambda^{-1} \; \ddy = \lambda^{-1} \; \ddy \, \pt
				\end{gather*}
				
				Therefore, \(\hat{A}\) has the form described in the Lemma's statement with respect to the canonical basis. But, if \(\gh = R^{-1} \circ H_{\rho} \circ R_{(2)} \circ R_{(1)} \circ \hh \circ R \in G_2\), then \(\dif{\gh}{\0} = \Ah\), completing the proof in this case.
				
			\begin{case}
			\(A_1\) has a second eigenvalue \(\lambda_2 \neq \lambda_1\)
		\end{case}
			\noindent Fix a positive basis of unit eigenvectors \(\roost{u,w}\). Let \(\wp\) be the orthogonal complement of \(w\) with respect to \(u\). For each \(\phi \in \clint{0, \pi/2}\), we define an orthonormal frame \(\roost{x_{\phi}, y_{\phi}}\) as
				\[
					x_{\phi} = \cos \phi \; u + \sin \phi \; \wp / |\wp| \quad \aand \quad y_{\phi} = - \sin \phi \; u + \cos \phi \; \wp / |\wp| 	\vg	
				\]
				and consider the same continuous function \(\xi\) as before. We now have \(\xi (0) \phantom{\cdot} \xi (\pi/2) \leq 0\), so there exists \(0 \leq \hphi < \pi/2\) such that \(A_1 \xh\) and \(A_2 \yh\) are orthogonal, where \(\xh = x_{\hphi}\) and \(\yh = y_{\hphi}\). We may thus fix a planar rotation \(R_{(2)}\) -- possibly the identity -- applying the orthonormal frame \(\roost{ A_1 \xh / | A_1 \xh | , A_1 \yh / | A_1 \yh | }\) onto \(\roost{\xh, \yh}\). From here, the proof follows as in Case 1, by defining \(\nu_i\), \(\rho\), \(\lambda\) and \(R\) exactly as before.
	\end{proof}
	
	\begin{lemma}[The Extension Lemma] \label{lemma:extlemma}
		Let \(G \subset \diff{1}{\US}\) be a proper extension of \(\mob{\US}\). Then, there exists an \(\mathcal{I}G_2\) \(\reg{1}\)-isotopy \(\seq{g_t}{t \in \II}\) such that:
		\begin{enumerate}[\bfseries (1)]
			\item for every \(t > 0\), the differential \(\dif{g_t}{\0}\) is a hyperbolic saddle, having the tangent line \(\tanspace{\0}{\Gamma}\) as its stable direction,
			\item \(\dif{g_1}{\0} = \diag{\lambda, \lambda^{-1}}\) with respect to the canonical basis of \(\tanspace{\0}{\US}\).
		\end{enumerate}				
	\end{lemma}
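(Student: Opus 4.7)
My plan is to invoke \hyperref[lemma:saddle]{Lemma~\ref{lemma:saddle}} to obtain $\gh \in G_2$ with $\dif{\gh}{\0} = \diag{\lambda_0, \lambda_0^{-1}}$, for some $0 < \lambda_0 < 1$. The desired isotopy is then built in two stages: first, a commutator-type family in $G_2$ produces hyperbolic saddle differentials at $\0$; second, a continuously varying rotational conjugation realigns the stable eigendirection onto $\tanspace{\0}{\Gamma}$.

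For the first stage, consider $c_\theta = R_{\theta} \circ \gh^{-1} \circ R_{-\theta} \circ \gh$ for $\theta$ ranging over $\clint{0, \theta^{*}}$, where $\theta^{*}$ parametrizes the quarter-turn rotation. Each $c_\theta \in G_2$, the family is $\reg{1}$-continuous in $\theta$, and $c_0 = \id$. Direct linear algebra in the canonical coordinates of $\tanspace{\0}{\US}$ gives
\[\dif{c_\theta}{\0} = \begin{pmatrix} \cos^2\theta + \lambda_0^2 \sin^2\theta & (\lambda_0^{-2} - 1) \sin\theta \cos\theta \\ (1 - \lambda_0^2) \sin\theta \cos\theta & \cos^2\theta + \lambda_0^{-2} \sin^2\theta \end{pmatrix},\]
which has determinant $1$ and trace $2 + (\lambda_0^{-1} - \lambda_0)^2 \sin^2\theta > 2$ for $\theta \in \opint{0, \theta^{*}}$. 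Hence $\dif{c_\theta}{\0}$ is a hyperbolic saddle for every such $\theta$; at $\theta = \theta^{*}$, it equals $\diag{\lambda_0^2, \lambda_0^{-2}}$, already satisfying the stable-direction requirement. For intermediate $\theta$, however, a first-order expansion shows that the stable eigenvector tends to the line spanned by $(1, -\lambda_0)$ as $\theta \to 0^+$, so it is tilted.

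For the second stage, define $\beta(\theta)$ to be the angle for which $R_{\beta(\theta)}$ rotates the stable eigendirection of $\dif{c_\theta}{\0}$ onto $\tanspace{\0}{\Gamma}$. Then $\beta$ is continuous on $\opint{0, \theta^{*}}$ with $\beta(\theta^{*}) = 0$, and it extends continuously to the left endpoint by $\beta(0) = \arctan(\lambda_0)$. Set
\[g_t = R_{\beta(\theta(t))} \circ c_{\theta(t)} \circ R_{-\beta(\theta(t))}, \quad \theta(t) = t \, \theta^{*}.\]
This is an $\mathcal{I} G_2$ $\reg{1}$-isotopy: each factor lies in $G_2$; $g_0 = \id$ because conjugation of the identity is the identity even though $\beta(0) \neq 0$; for $t > 0$, $\dif{g_t}{\0}$ is conjugate to $\dif{c_{\theta(t)}}{\0}$ and by construction has $\tanspace{\0}{\Gamma}$ as its stable eigendirection; at $t = 1$, $\beta(\theta^{*}) = 0$ gives $g_1 = c_{\theta^{*}}$, yielding $\dif{g_1}{\0} = \diag{\lambda_0^2, \lambda_0^{-2}}$. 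Renaming $\lambda := \lambda_0^2 \in \opint{0, 1}$ gives the required form in condition~\textbf{(2)}.

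The main obstacle is the continuous extension of the alignment rotation $\beta$ to $\theta = 0$, where the stable eigendirection of $\dif{c_0}{\0} = I$ is genuinely undefined. The essential observation is that $\beta$ does admit a nonzero limit $\arctan(\lambda_0)$ as $\theta \to 0^+$, readable off from the first-order expansion above, and that this nonzero limit is absorbed harmlessly by the conjugation structure, preserving $\reg{1}$-continuity of $t \mapsto g_t$ at $t = 0$ and thus condition~\textbf{(1)}.
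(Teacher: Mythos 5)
Your proof is correct and takes a genuinely different route from the paper's. The paper also starts from Lemma~\ref{lemma:saddle}, obtaining $\gh\in G_2$ with $\dif{\gh}{\0}=\diag{\mu,\mu^{-1}}$, but then sets $g_t = R_{-\theta(\pi t/2)}\circ\gh^{-1}\circ R_{\pi t/2}\circ\gh$, where $\theta(s)$ is simply the polar angle of $v_s = \dif{\gh^{-1}\circ R_s\circ\gh}{\0}\pts{\ddx}=\pts{\cos s,\,\mu^2\sin s}$. Since $\abs{v_s}<1$ for $s>0$ and $R_{-\theta(s)}v_s$ is a positive multiple of $\ddx$, the $x$-axis is by construction an eigendirection of $\dif{g_t}{\0}$ with eigenvalue $\abs{v_s}<1$, hence the stable direction (the other eigenvalue being $\abs{v_s}^{-1}$ because the determinant is $1$); moreover $\theta(0)=0$ is immediate, so continuity of the correcting rotation down to $t=0$ comes for free. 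Your version instead conjugates the commutator $c_\theta = R_\theta\circ\gh^{-1}\circ R_{-\theta}\circ\gh$ by an alignment rotation $R_{\beta(\theta)}$, where $\beta(0^+)=\arctan\lambda_0\neq 0$; this is harmless because $g_0=R_{\beta(0)}\circ\id\circ R_{-\beta(0)}=\id$ regardless, but establishing that $\beta$ extends continuously to $\theta=0$ needs a first-order eigenvector-perturbation argument which you only sketch and which the paper's one-sided correction sidesteps entirely. There is also a small branch issue you leave implicit: $\beta$ aligns a \emph{line}, not a vector, so it is determined only modulo $\pi$; one should take a continuous lift of the stable direction, and observe that if this lift forces $\beta(\pi/2)=\pi$ rather than $0$ the end map becomes $R_\pi\circ c_{\pi/2}\circ R_{-\pi}$, whose differential at $\0$ still equals $\diag{\lambda_0^2,\lambda_0^{-2}}$ because $R_\pi=-\id$ commutes with diagonal matrices. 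Both constructions deliver the lemma; the paper's is a little more economical because it avoids the perturbation step.
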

	\begin{proof}
		By \hyperref[lemma:saddle]{Lemma \ref{lemma:saddle}}, we may fix \(\gh \in G_2\) such that \(\Ah = \dif{\gh}{\0} = \diag{\mu, \mu^{-1}}\), where \(0 < \mu < 1\). For \(s \in \clint{0, \pi/2}\), let \(B_s = \Ah^{-1} \, R_s \, \Ah\) and \(v_s = B_s ( \ddx )\). Then, \(\abs{v_s} < 1\) for \(0 < s \leq \pi/2\). Next, we define the continuous function \(\theta \colon \clint{0, \pi/2} \to \RR\) to be the angle between \(v_s\) and \(\ddx\), and let \(C_s = R_{- \theta(s)} \, B_s\). Then, \(C_0 = \id\) and, for \(0 < s \leq \pi/2\), the \(x\)-axis is a contracting direction for \(C_s\), of rate \(\lambda_s = \abs{v_s}\). But, since \(\det C_s = 1\), it must also have an invariant direction of expansion rate \(\lambda_s^{-1}\). In other words, \(C_s\) is a hyperbolic saddle, although not necessarily diagonal. When \(s = \pi/2\), however, it can be computed directly that \(C_{\pi/2} = \diag{\lambda, \lambda^{-1}}\), where \(\lambda = \mu^2\). Lastly, we define \(\seq{g_t}{t \in \II}\) as:
		\begin{equation} \label{eq:isotopy} 
			g_t = R_{- \theta \pts{\frac{\pi t}{2}} } \circ \gh^{-1} \circ R_{\frac{\pi t}{2}} \circ \gh \in G_2 \pt
		\end{equation}				
		Since \(\theta \pts{\cdot}\) is a continuous function, \eqref{eq:isotopy} defines an \(\mathcal{I}G_2\)-isotopy which is continuous not only with respect to the uniform topology, but also to the \(C^1\) topology. For each \(t\), we have \(\dif{g_t}{\0} = C_{{\pi t}/{2}}\). Thus, given that the \(x\)-axis corresponds to \(\Gamma\) on the sphere -- with the direction generated by \(\ddx\) identified with \(\tanspace{\0}{\Gamma}\) -- the considerations previously made translate into the statements of the lemma, completing the proof.
	\end{proof}
	
\section{A Fundamental Lemma} \label{sec:fund}

	For each angle \(\alpha \in \opint{0, \!\pi/2} \), the subset of the plane describe in polar coordinates as follows shall be referred to as the \emph{\(\alpha\)-cone}:
	\[
		\cone{\alpha} = \set{ r \e{ \ii \phantom{\cdot} \theta } }{r \geq 0 \aand \text{either } \abs{\theta} \leq \alpha \oor \abs{\theta - \pi} \leq \alpha } \pt
	\]	
		
	We remark that, for \(0 < \lambda < 1\), any cone  is ``broadened'' under the action of the hyperbolic matrix \(A = \diag{\lambda, \lambda^{-1}}\). More precisely, consider \(\UC\) parametrized by the counterclockwise	polar angle \(\theta\) that each direction of space makes with the \(x\)-axis. Then, the action of \(A\) induces a monotone circle dynamics \(\tilde{A}\) -- by radial projection -- such that \(\tilde{A}\) has repelling fixed points at \(0\) and \(\pi\) and attracting fixed points at \(\pi/2\) and \(3\pi / 2\).
		
	Now, if we let \(g\) be a planar diffeomorphism fixing the origin, then, for a given \(0 < \eps < \pi\), differentiability yields \(\delta > 0\) such that
\(\ang{\, g(z),  \dif{g}{\0} \! z \, } \leq \eps\) whenever \(0 < \abs{z} < \delta\). The next lemma is a consequence of these two remarks, along with the fact that the following diagram commutes:
	\[
		\begin{tikzcd}[column sep=1.25cm]
			v \in \RR^2 \setminus \roost{\0} \arrow[d, "\text{ polar angle }" left, mapsto] \arrow[r, "\text{saddle}", mapsto] & Av \arrow[r, "\text{rotation}", mapsto] & R_{\pm \tau} \pts{Av} \arrow[d, "\text{ polar angle }", mapsto] \\
			\theta \in \UC \arrow[r, "\text{source-sink}" below, mapsto] & \tilde{A} \pts{\theta} \arrow[r, "\text{translation}" below, mapsto] & \tilde{A} \pts{\theta} \pm \tau 		
		\end{tikzcd} 
	\]		
		
	\begin{lemma} \label{lemma:projsep}
		Let \(g\) be a planar diffeomorphism for which the origin is a hyperbolic fixed point satisfying \( \dif{g}{\0} = \diag{\lambda, \lambda^{-1}}\), \(0 < \lambda < 1\). Then, for a given \(0 < \alpha < \pi / 2\), there exist \(\tau > 0\) and \(\delta > 0\) such that:
		\begin{equation*}
			0 < \abs{z} < \delta \; \aand \; z \notin \cone{\alpha} \;  \text{ imply } \; R_{\omega} \pts{g(z)} \notin \cone{\alpha} \, \vg \wnv \, \abs{\omega} < \tau \pt
		\end{equation*}
		In particular, \(g^k (z) \notin \cone{\alpha}\) for every \(k \in \NN\) such that the orbit \(\roost{z, g(z), \ldots, g^{k-1}(z)}\) remains in \(\odisk{\0}{\delta}\). 		
	\end{lemma}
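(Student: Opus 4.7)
The plan is to reduce everything to the ``angular dynamics'' indicated by the commutative diagram preceding the statement. Parametrize \(\UC\) by the polar angle \(\theta\), and let \(\tilde{A}\) denote the induced circle map; explicitly, \(\tilde{A}(\theta) = \arg \pts{\lambda \cos \theta + \ii \lambda^{-1} \sin \theta}\). Since \(0 < \lambda < 1\), \(\tilde{A}\) is an orientation-preserving diffeomorphism of \(\UC\) with repelling fixed points at \(0\) and \(\pi\) and attracting fixed points at \(\pm \pi/2\). In angular terms, \(\cone{\alpha}\) corresponds to \(E = \clint{-\alpha, \alpha} \cup \clint{\pi - \alpha, \pi + \alpha}\), the union of two neighborhoods of the repellers. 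By strict monotonicity, \(\tilde{A}\) sends each component of \(\compl{E}\) strictly into its own interior: \(\tilde{A}(\alpha) > \alpha\) and \(\tilde{A}(\pi - \alpha) < \pi - \alpha\), with analogous inequalities on the lower half. Compactness then yields some \(\eta > 0\) such that \(\tilde{A}\pts{\compl{E}}\) lies at angular distance at least \(3\eta\) from \(E\).

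The next ingredient is differentiability at the origin. Writing \(g(z) = Az + \abs{z}\, \eps(z)\) with \(\eps(z) \to 0\) as \(z \to \0\), and using \(\abs{Az} \geq \lambda \abs{z}\), a straightforward triangle estimate yields \(\ang{g(z), Az} \to 0\) uniformly as \(\abs{z} \to 0\). I pick \(\delta > 0\) so that \(\ang{g(z), Az} < \eta\) whenever \(0 < \abs{z} < \delta\), and set \(\tau = \eta\). For such \(z\) with \(z \notin \cone{\alpha}\), the polar angle of \(g(z)\) lies within \(\eta\) of \(\tilde{A}(\arg z)\), hence at angular distance at least \(2\eta\) from \(E\). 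A further rotation by \(\abs{\omega} < \tau\) costs at most another \(\eta\), leaving a positive gap between \(\arg R_\omega (g(z))\) and \(E\); consequently, \(R_\omega (g(z)) \notin \cone{\alpha}\).

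The iterative statement then follows by an immediate induction on \(k\), applying the one-step result with \(\omega = 0\): if \(g^j(z) \in \odisk{\0}{\delta} \setminus \cone{\alpha}\), then \(g^{j+1}(z) = g\pts{g^j(z)} \notin \cone{\alpha}\), so as long as the finite orbit stays in \(\odisk{\0}{\delta}\) the property propagates. The only subtlety to handle with some care is the angle comparison between \(g(z)\) and \(Az\) near the origin, since \(\arg\) is discontinuous at \(\0\); the bound \(\abs{Az} \geq \lambda \abs{z}\) is what renders this clean, as it reduces the angular defect to a quantity of the form \(\abs{g(z) - Az} / \pts{\lambda \abs{z}}\), which is \(o(1)\) by the hypothesis on \(\dif{g}{\0}\).
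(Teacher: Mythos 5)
Your proof is correct and follows essentially the same route the paper sketches (the paper leaves the lemma without a displayed proof, presenting it as ``a consequence'' of the two preceding remarks about the source-sink circle dynamics $\tilde A$ and the $o(\abs{z})$ angular defect near the origin, packaged in the commutative diagram). You have simply supplied the quantitative margin: a uniform gap $3\eta$ between $\tilde A(\compl{E})$ and $E$, a $\delta$ making the angular defect $< \eta$, and $\tau = \eta$, followed by the obvious induction on the orbit.
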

	
	We keep this result aside for now and move on to understand how isotopies of the kind defined in the \hyperref[lemma:extlemma]{Extension Lemma} act on cones. For that, we record the following consequence of the chord length formula from elementary geometry: 
	\begin{equation} \label{eq:polardist}
		\abs{z - w} \geq \min \roost{ \abs{z}, \abs{w} } \sin \col{ \frac{\ang{z,w}}{2} } \pt
	\end{equation}	
	
	\begin{lemma} \label{lemma:precone}
		Let \(\seq{g_t}{t \in \II}\) be a planar \(\reg{1}\)-isotopy such that the origin is a fixed point and \(\dif{g_t}{\0}\) has the \(x\)-axis as an invariant direction for every \(t\). Then, given \(0 < \alpha <\pi/2\), there exist \(0 < \beta < \alpha \) and \( \rho > 0\) such that:
		\begin{equation*}
			z \in \odisk{\0}{\rho} \: \aand \: z \notin \cone{\alpha} \; \text{ imply } \; g_t(z) \notin \cone{\beta} \: \fevery t \in \II \pt
		\end{equation*}
	\end{lemma}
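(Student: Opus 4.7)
The plan is to separate the linear and nonlinear parts of \(g_t\) near the fixed origin and control each uniformly in \(t \in \II\). The invariance of the \(x\)-axis under \(\dif{g_t}{\0}\), together with the \(C^1\) continuity of the isotopy, lets me write the derivatives in the canonical basis as upper-triangular matrices
\[
M(t) \eqdef \dif{g_t}{\0} = \begin{pmatrix} a(t) & b(t) \\ 0 & c(t) \end{pmatrix} \vg
\]
with \(a, b, c \colon \II \to \RR\) continuous and \(a(t) c(t) \neq 0\) by invertibility. Compactness of \(\II\) then yields uniform bounds \(|c(t)| \geq c_1 > 0\) and \(|M(t) z| \geq c_0 |z|\) for every plane vector \(z\) and some \(c_0 > 0\), the latter via continuity and positivity of the smallest singular value of \(M(t)\).

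For the linear step, observe that for \(z = r \e{\ii \theta}\) outside \(\cone{\alpha}\) the \(y\)-component of \(M(t) z\) equals \(r\, c(t) \sin\theta\), so it is at least \(c_1 r \sin\alpha\) in absolute value. Hence the unsigned angle \(\psi(t, \theta)\) that \(M(t) \e{\ii\theta}\) makes with the \(x\)-axis is a continuous, strictly positive function on the compact set \(\II \times \col{\alpha, \pi - \alpha}\) (the lower vertical wedge being symmetric), and is therefore bounded below by some \(2 \beta_0 > 0\). I then set \(\beta = \min \roost{\beta_0, \alpha/2}\), which in particular satisfies \(\beta < \alpha\), so that every \(M(t) z\) with \(z \in \cone{\alpha}^c\) makes angle at least \(2\beta\) with the \(x\)-axis.

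For the nonlinear step, let \(\varepsilon_t(z) \eqdef g_t(z) - M(t) z\). The \(C^1\) continuity of the isotopy on the compact interval \(\II\) forces \(|\varepsilon_t(z)|/|z| \to 0\) as \(|z| \to 0\) uniformly in \(t\), so I may prescribe \(\eta > 0\) and obtain \(\rho > 0\) with \(|\varepsilon_t(z)| \leq \eta |z|\) throughout \(\II \times \odisk{\0}{\rho}\). Choosing \(\eta\) small enough that \(\eta/(c_0 - \eta) < \sin(\beta/2)\), I get \(|g_t(z)| \geq (c_0 - \eta)|z| > 0\), and the chord-length inequality \eqref{eq:polardist} applied to the pair \(M(t) z, g_t(z)\) yields
\[
\sin \pts{ \ang{g_t(z), M(t) z}/2 } \leq \frac{|\varepsilon_t(z)|}{\min \roost{|g_t(z)|, |M(t) z|}} \leq \frac{\eta}{c_0 - \eta} < \sin(\beta/2) \vg
\]
so \(\ang{g_t(z), M(t) z} < \beta\) and consequently \(g_t(z)\) still makes angle at least \(\beta\) with the \(x\)-axis, i.e., lies outside \(\cone{\beta}\).

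The step requiring most care is the uniform-in-\(t\) control: each constant (\(c_0, c_1, \beta_0, \rho\)) must be independent of the time parameter. The compactness of \(\II\) combined with the \(C^1\) hypothesis supplies this at every stage, while the chord-length bound \eqref{eq:polardist}, cited right before the statement, is precisely the tool that converts the vector-distance estimate into the desired angular one.
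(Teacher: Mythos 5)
Your proof is correct and follows essentially the same strategy as the paper's: control the linear part via compactness (get a uniform angular gap between $\dif{g_t}{\0}z$ and the $x$-axis for $z\notin\cone{\alpha}$), then control the nonlinear remainder via differentiability plus the chord-length inequality \eqref{eq:polardist}. The main difference is purely procedural: the paper fixes $t$, produces local constants $\beta_t,\rho_t,\delta_t$, and finishes with a finite subcover of $\II$, whereas you obtain the uniform bounds all at once from joint continuity of $(t,z)\mapsto\dif{g_t}{z}$ on $\II\times\cdisk{\0}{1}$ (and of $(t,\theta)\mapsto\psi(t,\theta)$ on $\II\times\col{\alpha,\pi-\alpha}$). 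Your explicit upper-triangular parametrization of $\dif{g_t}{\0}$ and the resulting $y$-component formula $r\,c(t)\sin\theta$ make the positivity of $\psi$ transparent; the paper instead tracks only the two boundary directions $v_\alpha,v_\alpha^*$. Both execute the same idea, and yours is arguably slightly more streamlined; only the one-line justification that $|\varepsilon_t(z)|/|z|\to 0$ \emph{uniformly} in $t$ could be expanded (it follows from uniform continuity of $\dif{g_t}{z}$ on $\II\times\cdisk{\0}{1}$ together with $\varepsilon_t(z)=\int_0^1(\dif{g_t}{sz}-\dif{g_t}{\0})z\,ds$), but it is not a genuine gap.
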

	\begin{proof}
		For a fixed \(t \in \II\), let \(A_t = \dif{g_t}{\0}\). Consider \(v_{\alpha}\) and \(v_{\alpha}^{\ast}\) unit vectors of angle \(\alpha\) and \(2\pi - \alpha\), respectively, whose spans delimit \(\cone{\alpha}\). Given that the \(x\)-axis is \(A_t\)-invariant, \(A_t v_\alpha\) lies to the left and \(A_t v_{\alpha}^{\ast}\) lies to the right of it. Also, at least one among \(\ang{A_t v_{\alpha}, \ddx}\) and \(\ang{A_t v_{\alpha}^{\ast}, \ddx}\) has to be smaller than \(\pi / 2\), because \(A_t\) is an orientation preserving linear isomorphism. Thus, by taking
		\[
			\beta_t = \min \roost{ \alpha \, , \, \ang{A_t v_{\alpha}, \ddx} \, , \, \ang{A_t v_{\alpha}^{\ast}, \ddx} } 
		\]
		we have \(0 < \beta_t \leq \alpha\) and \(A_t v \notin \cone{\beta_t}\) whenever \(v \notin \cone{\alpha}\). This reasoning yields a global solution to the associated linear problem. Once it is done, let \(\eps_t = r_t \, c_t \, \sin \pts{ {\beta_t}/{4} }\), where \(r_t = \pts{1 + \sin \pts{ \beta_t / 4 }}^{-1}\) and \(c_t > 0\) is such that \(\abs{A_t v} \geq c_t \abs{v}\) for every vector \(v\). Having these choices in mind, we obtain \(\rho_t > 0\) such that:
		\begin{equation} \label{eq:differentiability}
			\abs{g_t (z) - A_t z} \leq \frac{\eps_t}{2} \abs{z} \: \wnv \: 0 < \abs{z} < \rho_t \pt
		\end{equation}		
		With respect to the compact-open topology, we consider the following \emph{subbasic neighbourhood} \(\mathcal{V}_t\) of \(g_t\), as in the first chapter of \cite{Banyaga97}:
		\begin{equation*}
			\mathcal{V}_t = \set{f \in \diff{1}{\RR^2}}{ \sup_{\cdisk{\0}{\rho_t}} \abs{ f - g_t } \leq \frac{\eps_t}{2} \; \aand \; \sup_{\cdisk{\0}{\rho_t}} \nrm{ \mathrm{D}f - \mathrm{D}g_t } \leq \frac{\eps_t}{2} } \pt
		\end{equation*}		
		By continuity of the isotopy, there exists \(\delta_t > 0\) with the following property:
		\begin{equation} \label{eq:sbasicnbd}
			s \in \II \aand \abs{s - t} < \delta_t \; \text{ imply } \; g_s \in \mathcal{V}_t \pt 
		\end{equation}		
		Thus, when \(\abs{s - t} < \delta_t\) and \(\abs{z} < \rho_t\) simultaneously, we have
		\begin{align*}
			\abs{ g_s (z)  - A_t z } &\leq \abs{ g_s(z) - g_t(z) } + \abs{ g_t (z) - A_t z } \\
			&\leq \pts{ \sup_{\cdisk{\0}{\rho_t}} \nrm{ \mathrm{D} (g_s - g_t) } } \abs{z} + \frac{\eps_t}{2} \abs{z} \leq \eps_t \abs{z} \vg
		\end{align*}
		where the Mean Value Inequality was used, along with \eqref{eq:differentiability} and \eqref{eq:sbasicnbd}. It follows that \(\abs{g_s (z)} \geq \pts{ c_t - \eps_t } \abs{z}\), and the right hand side of this inequality is strictly positive. This allows us to conclude that \(g_s(z)\) lies in a closed disk centered at \(A_tz\) and of radius \(\eps_t \abs{z}\), to which the origin is an external point. In particular, \(\ang{g_s (z), A_t z} < \pi / 2 \). These facts, along with \eqref{eq:polardist}, yield that
		\[
			\sin \col{ \frac{ \ang{g_s(z), A_t z} }{2} } \leq \frac{ \abs{ g_s (z)  - A_t z } }{ \pts{ c_t - \eps_t } \abs{z} } \leq \frac{ \eps_t \abs{z} }{ \pts{ c_t - \eps_t } \abs{z} } = \sin \pts{\frac{\beta_t}{4}} \pt
		\]	
		
		These calculations allow us to conclude that \( \ang{g_s(z), A_t z} \leq \beta_t / 2 \), as long as \(\abs{s-t} < \delta_t\) and \(0< \abs{z} < \rho_t\). If also \(z \notin \cone{\alpha}\), we know from the linear case that \(A_t z \notin \cone{\beta_t}\). It follows that \(g_s (z) \notin \cone{\beta_t / 2}\). Lastly, by compactness we may find \(t_1, \ldots, t_n\) such that \(\II = \bigcup_{j=1}^{n} \opint{t_j - \delta_{t_j} , t_j + \delta_{t_j} } \cap \II\). Thus, by taking \(\rho = \min_j \rho_{t_j}\) and \(\beta = \min_j \beta_{t_j} / 2 \), the desired conditions are satisfied.
	\end{proof}
	
	\begin{cor} \label{lemma:conelemma}
		Let \(\seq{g_t}{t \in \II}\) be a planar \(\reg{1}\)-isotopy such that the origin is a fixed point and \(\dif{g_t}{\0}\) has the \(x\)-axis as an invariant direction for every \(t\). Then, given \(0 < \alpha < \pi/2\), there exist \(0 < \betam < \betap < \alpha \) and \( \rho > 0\) such that:
		\begin{enumerate}[\bfseries (1)]
			\item \(z \in \odisk{\0}{\rho}\) and \(z \notin \cone{\alpha}\) imply \(g_t(z) \notin \cone{\betap}\) for every \(t \in \II\),
			\item \(z \in \odisk{\0}{\rho}\) and \(z \in \cone{\betam}\) imply \(g_t(z) \in \cone{\betap}\) for every \(t \in \II\).
		\end{enumerate}
	\end{cor}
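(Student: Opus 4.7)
The plan is to prove part (1) by a direct invocation of Lemma \ref{lemma:precone} and part (2) by applying that same lemma to the inverse isotopy. For part (1), I would feed the given $\alpha$ into Lemma \ref{lemma:precone} and obtain $\betap \in \opint{0, \alpha}$ together with $\rho_1 > 0$ such that any $z \in \odisk{\0}{\rho_1}$ with $z \notin \cone{\alpha}$ satisfies $g_t(z) \notin \cone{\betap}$ for every $t \in \II$. This already yields (1), with $\rho_1$ serving as one candidate for the final $\rho$.

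For part (2), the idea is to consider the inverse isotopy $\seq{g_t^{-1}}{t \in \II}$. This is still a $\reg{1}$-isotopy: inversion is continuous in the compact-open $\reg{1}$ topology on $\diff{1}{\RR^2}$, and the identity $\dif{g_t^{-1}}{\0} = \pts{\dif{g_t}{\0}}^{-1}$ shows that the $x$-axis persists as an invariant direction for each linearized inverse. Applying Lemma \ref{lemma:precone} to this new isotopy, with $\betap$ now taking the role of $\alpha$, I obtain $\betam \in \opint{0, \betap}$ and $\rho_2 > 0$ such that whenever $w \in \odisk{\0}{\rho_2}$ and $w \notin \cone{\betap}$ one has $g_t^{-1}(w) \notin \cone{\betam}$ for every $t$. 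A separate uniform continuity argument — using joint continuity of $(t,z) \mapsto g_t(z)$ on a compact cylinder $\II \times \cdisk{\0}{r}$ together with $g_t(\0) = \0$ — then produces $\rho_3 > 0$ such that $g_t\pts{\odisk{\0}{\rho_3}} \subset \odisk{\0}{\rho_2}$ for every $t \in \II$.

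Setting $\rho = \min \roost{\rho_1, \rho_3}$, I would check (2) by contraposition: if $z \in \odisk{\0}{\rho} \cap \cone{\betam}$ yet $g_t(z) \notin \cone{\betap}$ for some $t$, then $w = g_t(z)$ lies in $\odisk{\0}{\rho_2}$ but outside $\cone{\betap}$, so the previous step yields $g_t^{-1}(w) = z \notin \cone{\betam}$, contradicting the choice of $z$. The most delicate point — and where I would be most careful — is verifying that the hypotheses of Lemma \ref{lemma:precone} genuinely transfer to the inverse isotopy: namely, that $t \mapsto g_t^{-1}$ remains continuous in the $\reg{1}$-topology, and that the invariance of the $x$-axis is inherited by the inverse linearization. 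Once these routine checks are settled, the remainder of the argument is just elementary bookkeeping with disks and cones.
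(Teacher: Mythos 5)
Your proposal is correct and follows essentially the same route as the paper: apply Lemma~\ref{lemma:precone} to the forward isotopy with angle $\alpha$ to get $\betap$ and a radius for part~(1), apply it again to the inverse isotopy $\seq{g_t^{-1}}{t \in \II}$ with angle $\betap$ to get $\betam$ and a second radius, shrink the radius so the forward isotopy stays inside the appropriate disk, and conclude~(2) by contraposition. Your extra care in verifying that the inverse isotopy inherits the hypotheses of Lemma~\ref{lemma:precone} -- $\reg{1}$-continuity of $t \mapsto g_t^{-1}$ and invariance of the $x$-axis under $\pts{\dif{g_t}{\0}}^{-1}$ -- is a point the paper passes over silently, but the argument is identical in substance.
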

	\begin{proof}
		Given \(\alpha\), \hyperref[lemma:precone]{Lemma \ref{lemma:precone}} yields \(0 < \betap < \alpha\) and \(\rhop > 0\) such that \(g_t(z) \notin \cone{\betap}\) for every \(t \in \II\), whenever \(z \in \odisk{\0}{\rhop}\) and \(z \notin \cone{\alpha}\). But, looking at the isotopy \(\seq{g_{t}^{-1}}{t \in \II}\) we see that it satisfies exactly the same hypotheses as listed in that Lemma. So, for this isotopy and the angle \(\betap\) just encountered we obtain \(\rhom > 0\) and \(0 < \betam < \betap\) such that:
		\[
			w \in \odisk{\0}{\rhom} \: \aand \: w \notin \cone{\betap} \; \text{ imply } \; g_{t}^{-1} (w) \notin \cone{\betam} \fevery t \in \II \pt
		\]
		Let \(\eta > 0\) be such that \(\abs{g_t (z)} < \rhom\) for every \(t \in \II\), whenever \(\abs{z} < \eta\). Then, by setting \(\rho\) as \(\min \{ \eta, \rhop \}\) we have the proposed statements satisfied. Indeed, if that was not the case,  \(g_s (z) \notin \cone{\betap}\) for some \(s \in \II\) and \(z \in \odisk{\0}{\rho} \cap \cone{\betam}\) would lead to a contradiction.
	\end{proof}
	
	\begin{defi} \label{defi:mbar}
		Given finite and nonzero points \(z,w\), we let \(\mxy{z}{w} \in \mob[2]{\US} \) be the \emph{unique} Möbius transformation fixing the poles and mapping \(z\) to \(w\). Also, we denote \(\mbar{z} =\mxy{z}{\1}\).
	\end{defi}
	
	We can write down explicit formulae for these transformations and see that \(\pts{z,\!w} \mapsto \mxy{z}{w}\) is continuous. Also, if \(\KK \subset \US\) is a nonempty compact set bounded away from \(\0\), 
	\begin{equation} \label{eq:auxlim}
		\text{ the sets \(\mbar{x} \pts{\KK}\) converge to \(\roost{\inff}\) on the Hausdorff distance as \(x \to \0\). }
	\end{equation}		
	Having settled these notations and technical results, we are now able to prove our Fundamental Lemma.
	
	\begin{lemma}[The Fundamental Lemma] \label{lemma:fundamentallemma}
		Let \(G \subset \diff{1}{\US}\) be a proper extension of \(\mob{\US}\). Then, for a given point \(\zz\) \emph{not} on the meridian \(\Gamma\) there exists an \(\mathcal{I}G_3\)-isotopy \(\seq{I^{\zz}_{t}}{t \geq 0} \), \emph{depending on \(\zz\)}, such that:
		\begin{enumerate}[\bfseries (1)]
			\item the trajectory of \(\zz\) under \(I^{\zz}\) does not intercept \(\Gamma\), and
			\item the \(\omega\)-limit of \(\zz\) satistfies \(\omegalim{ \zz }{ I^{\zz} } = \roost{\inff}\).
		\end{enumerate}
	\end{lemma}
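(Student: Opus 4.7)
\textit{Proof proposal.} The plan is to construct \(I^{\zz}\) by concatenating suitably normalized conjugates of the \hyperref[lemma:extlemma]{Extension Lemma} isotopy by Möbius scalings \(\phi_\eps(z) = \eps z\), with parameter \(\eps\) shrinking adaptively at each phase, so that a linearization argument pushes \(\zz\) geometrically further from \(\Gamma\) while keeping it on one fixed side.

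I begin by invoking the Extension Lemma to obtain the \(\mathcal{I}G_2\) \(\reg{1}\)-isotopy \(\seq{g_t}{t \in \II}\) whose differentials \(\dif{g_t}{\0} = C_{\pi t/2}\) are hyperbolic saddles with \(\tanspace{\0}{\Gamma}\) as stable direction. For any \(\eps > 0\), the scaling \(\phi_\eps(z) = \eps z\) belongs to \(\mob{\US} \cap G_2\); setting
\[
\tilde h_t^\eps(z) \;=\; \frac{g_t(\eps z)}{g_t(\eps)} \;=\; \pts{z \mapsto \frac{\eps z}{g_t(\eps)}} \circ \phi_\eps^{-1} \circ g_t \circ \phi_\eps(z)
\]
expresses it as a composition of elements of \(G\); one checks directly that \(\tilde h_t^\eps\) fixes \(\0\), \(1\) and \(\inff\), and that \(\tilde h_0^\eps = \id\), so \(\seq{\tilde h_t^\eps}{t \in \II}\) is an \(\mathcal{I}G_3\) \(\reg{1}\)-isotopy. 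The key approximation is that differentiability of \(g_t\) at \(\0\), combined with \(g_t(\eps) = \eps \lambda_t + o(\eps)\) where \(\lambda_t > 0\) is the contracting eigenvalue of \(C_{\pi t/2}\), yields
\[
\tilde h_t^\eps(z) \;=\; \lambda_t^{-1} \, C_{\pi t/2} \, z + o(\abs{z}) \quad \text{as } \eps \searrow 0,
\]
uniformly in \(t \in \II\) on bounded \(z\)-sets. The main term preserves each half-plane, since \(C_{\pi t/2}\) is orientation-preserving with \(x\)-axis invariant and \(\lambda_t^{-1} > 0\); at \(t = 1\) it equals \(\diag{1, \lambda^{-2}}\), which fixes real parts and dilates imaginary parts by \(\lambda^{-2} > 1\).

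Assume WLOG that \(\zz\) lies in the upper half-plane. Fix an angle \(\alpha \in (0, \pi/2)\) smaller than the angular distance from \(\zz\) to \(\Gamma\), and let \(\rho > 0\) be the radius provided by \hyperref[lemma:conelemma]{Corollary \ref{lemma:conelemma}}. Inductively choose \(\eps_n \searrow 0\) small enough that \(\eps_n\, \abs{F_n(\zz)} < \rho\) and the first-order error at step \(n+1\) is at most \(2^{-n}\, \abs{F_n(\zz)}\), where \(F_0 = \id\) and \(F_{n+1} = \tilde h_1^{\eps_n} \circ F_n\). Then define
\[
I^{\zz}_t \;=\; \tilde h_{t-n}^{\eps_n} \circ F_n \quad \text{for } t \in \clint{n, n+1} \pt
\]
Continuity at integer times is immediate from \(\tilde h_0^\eps = \id\), and each \(I^{\zz}_t \in G_3\) as a composition of \(G_3\)-elements. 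Applying the approximation and the Cone Corollary phase by phase, the trajectory \(t \mapsto I^{\zz}_t(\zz)\) stays in the upper half-plane---hence off \(\Gamma\)---and \(\abs{F_n(\zz)}\) grows like \(\lambda^{-2n}\abs{\mathrm{Im}(\zz)} \to \infty\), giving \(\omegalim{\zz}{I^{\zz}} = \roost{\inff}\).

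The principal obstacle is the quantitative propagation of the linearization approximation through infinitely many concatenations: the \(o(\abs{z})\)-errors must be controlled relative to the main term's geometric growth, so that neither the half-plane confinement nor the escape to infinity is disrupted at intermediate times. A geometric choice \(\eps_n \asymp \lambda^{2n}\), combined with the Cone Corollary applied during each phase to rule out drift into the cone around \(\Gamma\), should provide uniformly valid estimates at every stage.
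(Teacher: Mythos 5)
Your proposal shares with the paper the key device of producing a \(G_3\)-isotopy by post-composing a \(G_2\)-isotopy with the normalizing Möbius map \(\hat{M}[\,\cdot\,]\): indeed your \(\tilde h_t^\eps(z) = g_t(\eps z)/g_t(\eps)\) is exactly \(\hat{M}[\,f_t(\1)\,]\circ f_t\) for the conjugated isotopy \(f_t = \phi_\eps^{-1}\circ g_t\circ\phi_\eps\). The genuine difference lies in what \(G_2\)-isotopy feeds into that normalization. The paper iterates the saddle map \(g = g_1\) several times per phase and, at each discrete stage, inserts a Möbius map \(M_{k+1}\) that is both a scaling \emph{and} a rotation, the rotation being needed to pull the image of \(\1\) onto the stable manifold \(\stman\); this keeps \(f_t(\zz)\) confined to a compact set \(\KK\) bounded away from \(\0\) while \(f_t(\1)\to\0\), and the blow-up to \(\inff\) comes entirely from \(\hat{M}\). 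You instead avoid \(\stman\) altogether by conjugating with pure scalings \(\phi_{\eps_n}\), \(\eps_n\searrow 0\), so that each phase is approximated by the fixed linear map \(\diag{1,\lambda^{-2}}\), and \(\zz\)'s image escapes to \(\inff\) directly. This is an attractive simplification: no Lipschitz-graph description of \(\stman\) is needed, and there is no separate trajectory of \(\1\) to track.

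That said, the proposal as written stops short of a proof at precisely the points you flag, and a few of them are not merely bookkeeping. First, \hyperref[lemma:conelemma]{Corollary~\ref{lemma:conelemma}} controls the polar angle of \(g_t(\eps_n z)\), not of \(\tilde h_t^{\eps_n}(z)\); dividing by \(g_t(\eps_n)\) introduces a rotation by \(\arg g_t(\eps_n)\), and the half-plane confinement needs \(\sup_{t\in\II}\abs{\arg g_t(\eps_n)} < \betap\). This is achievable for small \(\eps_n\) (since \(g_t(\eps)\) is real to first order, uniformly in \(t\)), but the proposal never states or uses this bound. Second, the induction requires \(F_n(\zz)\notin\cone{\alpha}\) for \emph{every} \(n\) in order to re-apply the Cone Corollary at the next phase; the Cone Corollary alone only yields \(F_{n+1}(\zz)\notin\cone{\betap - \psi_n}\), which is weaker than \(\cone{\alpha}\). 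What rescues the induction is the structure of \(\diag{1,\lambda^{-2}}\), which moves polar angles monotonically toward \(\pi/2\), and hence away from \(\cone{\alpha}\) — but this needs to be made explicit with an error term that cannot erode the angular gain. Third, the stated error control \(\abs{e_n}\le 2^{-n}\abs{F_n(\zz)}\) bounds the modulus, whereas the half-plane confinement depends on \(\mathrm{Im}\,F_n(\zz)\); the cone condition makes these comparable (so the bound does work), but since the entire escape mechanism hinges on the imaginary part growing geometrically while the real part stays bounded, this comparison should be part of the inductive hypothesis rather than left implicit. All three gaps look fillable and the overall strategy is sound, but as presented the argument is a plausible plan rather than a proof.
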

	\begin{proof}
		Given \(\zz \notin \Gamma\), we assume for concreteness that it lies on the upper half-plane, and is thus given in polar coordinates as \(\zz = \Rz \,  \e{\ii \ttz}\), \(0 < \ttz < \pi\).
		
		Let \(\seq{g_t}{t \in \II}\) be as in the \hyperref[lemma:extlemma]{Extension Lemma}. Since \(\inff\) is fixed throughout, it can be thought of as a planar \(\mathcal{I}G_1\)-isotopy such that \(\dif{g_t}{\0}\) has the \(x\)-axis as an invariant direction for every \(t\) and \(\dif{g_1}{\0} = \diag{\lambda, \lambda^{-1}}\), \(0 < \lambda < 1\). To ease notation, we write \(g \eqdef g_1\) and \(A \eqdef \dif{g}{\0}\).
		
		Fix some \(0 < \alpha < \pi / 2\) such that the direction through \(\ttz\) is external to \(\cone{2 \alpha}\). \emph{With respect to  \(\alpha\)}, let \(\delta > 0\) and \(\tau > 0\) be as described in \hyperref[lemma:projsep]{Lemma \ref{lemma:projsep}}. Regarding this same \(\alpha\), and also the isotopy \(\seq{g_t}{t \in \II}\), \hyperref[lemma:conelemma]{Corollary \ref{lemma:conelemma}} yields a radius \(\rho > 0 \) and angles \(0 < \betam < \betap < \alpha\) such that, for \(z \in \odisk{\0}{\rho}\) and every \(t \in \II\), \(z \notin \cone{\alpha}\) implies \(g_t(z) \notin \cone{\betap}\), whilst \(z \in \cone{\betam}\) implies \(g_t(z) \in \cone{\betap}\). Lastly, we characterize the stable manifold \(\stman\) of \(g\) at \(\0\). Since the stable direction of \(A\) is the \(x\)-axis, \(\stman\) may be assumed to be a Lipschitz graph of the form \(y = y(x)\) having horizontal tangent at the origin. So, for \(\tau\) and \(\betam\) as obtained above and a sufficiently small radius \(\sigma > 0\), it can be assumed that \(\stman \cap \odisk{\0}{\sigma} \subset \cone{\min \roost{\tau \, , \, \betam}}\).
		
		Let \( 	0 < \rhoz < \min \roost{\delta, \rho, \sigma, 1} \). Then, in the disk \(\Dz \eqdef \odisk{\0}{\rhoz}\) all of the conditions described above are mutually satisfied, as conveyed in \hyperref[fig:d0]{Figure \ref{fig:d0}}.
		
		\begin{figure}[htb]
			\centering
			\includegraphics[scale=1.1]{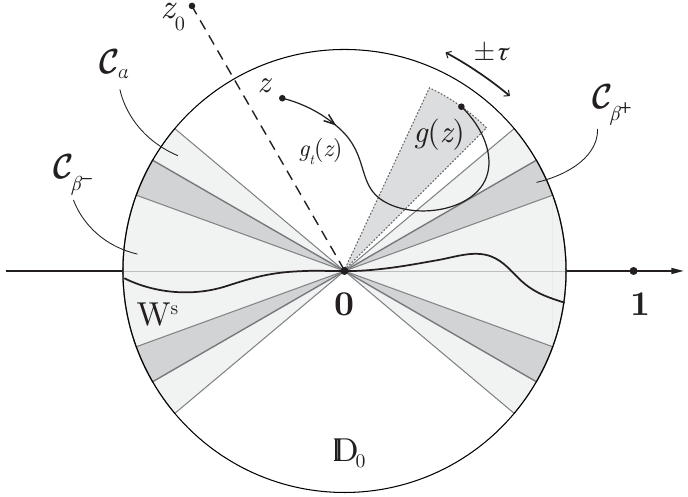}
			\caption{Inside the disk \(\Dz\), the isotopy path under \(g_t\) of points \(z\) outside of the \(\alpha\)-cone never enters the \(\betap\)-cone, while the stable manifold of \(g\) is a Lipschitz graph fully contained within the \(\min\roost{\tau, \betam}\)-cone.}
			\label{fig:d0}
		\end{figure}
		
		Once these choices are made, fix a positive real number \(0 < r_1 < \min \roost{ \rhoz \, , \, \rhoz/ \Rz }\)	and let \(\rho_1 = r_1 R_0\) and \(\KK_0 = \cl{\fr{\odisk{\0}{\rho_1}} \setminus \cone{\alpha}} \). Thus, \(\KK_0\) is a compact set -- contained in \(\Dz\) and not intercepting the stable manifold -- composed of two closed arcs of a circumference. In particular, there exists \(n_0 \in \NN\) such that:
		\begin{equation} \label{eq:n0}
			\text{if } n_x = \min \set{n \in \NN}{g^n (x) \notin \cl{\Dz}} \vg \, \text{ then } \, n_x \leq n_0 \fevery x \in \KK_0 \pt
		\end{equation}		
		Note that \(r_1 < \rhoz\). So, by the graph characterization of \(\stman\), for some \(\tau_1\) with \(\abs{\tau_1} \leq \min \roost{\tau, \betam}\) we have that \(r_1 \e{\ii \tau_1} \in \stman\), as suggested by \hyperref[fig:lipgraph]{Figure \ref{fig:lipgraph}}. 
		
		\begin{figure}[htb]
			\centering
			\includegraphics[scale=1.1]{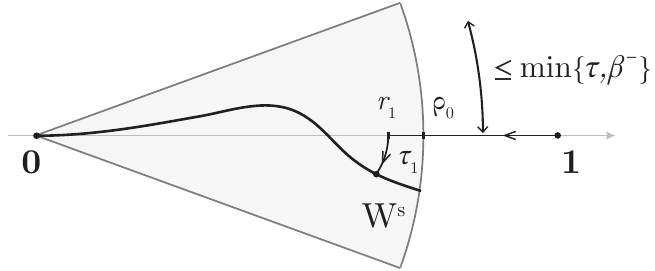}
			\caption{Since the stable manifold \(\stman\) is locally given as a Lipschitz graph \(y = y(x)\), it can be reached from the \(x\)-axis by an uniformly bounded rotation.}
			\label{fig:lipgraph}
		\end{figure}
		
		\begin{claim}
			Define \(M_1 (z) = r_1 \e{\ii \tau_1} z\). Then, the mapping \(M_1\) has the following properties:
			\begin{enumerate}[(i)]
				\item \(M_1 \in \mob[2]{\US}\),
				\item \(v_1 \eqdef M_1 \pts{\1} \in \stman\),
				\item \(w_1 \eqdef M_1 \pts{\zz} \in \KK_0\).
			\end{enumerate}			
		\end{claim}
		\begin{proof}[Proof of claim]
			Item (i) follows from the form of \(M_1\) and (ii) from the choice of \(\tau_1\). As for (iii), consider \(w_1 = M_1(z_0) = \e{\ii \tau_1} \, r_1 z_0\). Since \(r_1 z_0\) has the same polar angle \(\ttz\) as \(z_0\), \(\theta \pts{w_1} = \ttz + \tau_1\). But \(\abs{\tau_1} \leq \betam < \alpha\) and \(2 \alpha < \ttz < \pi - 2 \alpha\), which together imply \(\ttz + \abs{\tau_1} < \pi - \alpha\) and \(\ttz - \abs{\tau_1} > \alpha\). It follows that \(\alpha < \ttz + \tau_1 < \pi - \alpha\), 	allowing us to conclude that \(w_1 \notin \cone{\alpha}\). Furthermore, \(\abs{w_1} = \rho_1\), yielding \(w_1 \in \KK_0\).
		\end{proof}
		
		In particular, \(n_1 \eqdef n_{w_1}\) as in \eqref{eq:n0} is well-defined.
		
		\begin{claim}
			Consider \(f_t = g_{t - \floor{t}} \circ g^{\floor{t}} \circ M_1\), for \(0 < t \leq n_1\). Then, \(f\) is an isotopy satisfying \(f_t \pts{\zz} \notin \cone{\betap}\) and \(f_t \pts{\1} \in \cone{\betap}\), for every \(t \in \opclint{0,n_1}\).
		\end{claim}
		\begin{proof}[Proof of claim]
			On the one hand, \(f_t (z_0) = g_{t - \floor{t}} \, \pts{g^{\floor{t}} \pts{w_1}}\). Since \(w_1 \notin \cone{\alpha}\), each \(g^{\floor{t}} \pts{w_1}\) does not belong to \(\cone{\alpha}\) either, as observed in \hyperref[lemma:projsep]{Lemma \ref{lemma:projsep}}. However, it does belong to \(\Dz\) while \(t < n_1\). Thus, \(g_s \pts{g^{\floor{t}} \pts{w_1}} \notin \cone{\betap}\) for every \(s = t - \floor{t} \in \II\). On the other hand, \(f_t \pts{\1} = g_{t - \floor{t}} \, \pts{g^{\floor{t}} \pts{v_1}}\). Since \(v_1 \in \stman\), each \(g^{\floor{t}} \pts{v_1}\) belongs to \(\stman \cap \Dz \subset \cone{\betam} \cap \Dz\). Therefore, \(g_s \pts{g^{\floor{t}} \pts{v_1}} \in \cone{\betap}\) for every \(s = t - \floor{t} \in [0,1]\), as claimed.
		\end{proof}
		
		This setting is pictorially represented in \hyperref[fig:isotopy]{Figure \ref{fig:isotopy}}, where the points \(	z_1 \eqdef f_{n_1} \pts{\zz} = g^{n_1} \pts{w_1} \notin \cone{\alpha}\) and \(u_1 \eqdef f_{n_1} \pts{\1} = g^{n_1} \pts{v_1} \in \stman\) were introduced.
		
		\begin{figure}[htb]
			\includegraphics[scale=1.1]{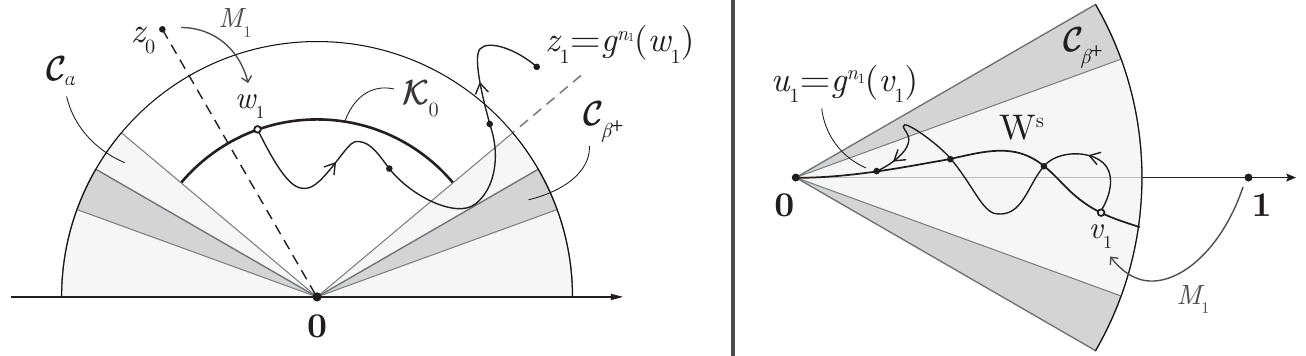}
			\caption{\(f_t\) promotes two parallel processes: points in \(\mathcal{K}_0\) are successively dragged out of \(\Dz\) without entering the \(\betap\)-cone, while the images of \(\1\) are dragged towards the origin over the stable manifold.}
			\label{fig:isotopy}
		\end{figure}
		
		By the choice of \(n_1\), \(|z_1| > \rhoz\). Let \(r_2 = \rho_1 / |z_1|\). Since product by \(r_2\) does not change angles, \(r_2 \, z_1 \in \KK_0\) and \(r_2 \, u_1 \in \Dz \cap \cone{\min \roost{\tau, \betam}}\). In particular, \(\e{\ii \tau_2} \pts{ r_2 \, u_1} \in \stman\) for some \(\tau_2\) with \(\abs{\tau_2} \leq \min \roost{\tau, \betam}\).
		
		\begin{claim}
			Define \(M_2 (z) = r_2 \e{\ii \tau_2} z\). Then, the mapping \(M_2\) has the following properties:
			\begin{enumerate}[(i)]
				\item \(M_2 \in \mob[2]{\US} \),
				\item \(v_2 \eqdef M_2 \pts{u_1} \in \stman\),
				\item \(w_2 \eqdef M_2 \pts{z_1} \in \KK_0\).
			\end{enumerate}	
		\end{claim}
		\begin{proof}[Proof of claim]
			Item (i) follows from the form of \(M_1\) and (ii) from the choice of \(\tau_2\). As for (iii), recall that \(r_2 z_1\) belongs to \(\Dz \setminus \cone{\alpha}\), which yields \(\alpha < \theta \pts{r_2 z_1} - \tau <  \theta \pts{r_2 z_1} + \tau < \pi - \alpha\). This implies, along with \(\abs{\tau_2} \leq \tau\), 	that \(w_2 \notin \cone{\alpha}\). Furthermore, \(\abs{w_2} = \abs{r_2 \, z_1} = \rho_1\), establishing that \(w_2 \in \KK_0\).
		\end{proof}
		
		In particular, \(n_2 \eqdef n_{w_2}\) is well-defined. Consider now the expression \(f_t = g_{t - \floor{t}} \circ g^{\floor{t} - n_1} \circ M_2 \circ f_{n_1}\), for \(n_1 < t \leq n_1 + n_2\). By arguments analogous to the ones developed previously, we have that \(t \mapsto f_t\) is continuous over the interval \(\opclint{n_1, n_1 + n_2}\). Also, \(f_t \pts{\zz} \notin \cone{\betap}\) and \(f_t \pts{\1} \in \cone{\betap}\), for every \(t \in \opclint{n_1, n_1 + n_2}\). In a similar fashion, we define inductively, for \(k \geq 0\):
		\begin{equation}
			f_t = \begin{cases}
				\id & \text{ if } t = 0 \vg \\
				g_{t - \floor{t}} \circ g^{\floor{t} - N_k} \circ M_{k+1} \circ f_{N_k} & \text{ on the interval } N_k < t \leq N_{k+1} \vg
			\end{cases} 
		\end{equation}
		where \(N_0 = 0\), \(N_k = \ds \sum_{i=1}^{k} n_i\) and the numbers \(n_k\) and the mappings \(M_k\) are determined as follows:
		\begin{itemize}
			\item \(M_{k+1} \in \mob[2]{\US}\) is a transformation of the form
			\[
				M_{k+1} \, (z) = r_{k+1} \, \e{\ii \tau_{k+1}} z \vg
			\]
			mapping \(z_k \eqdef f_{N_k} \pts{\zz} \notin \cl{\Dz}\) to a point \(w_{k+1} \in \KK_0\) and \(u_k \eqdef f_{N_k} \pts{\1} \in \stman\) to a point \(v_{k+1} \in \stman\), via an homothety of scaling factor \(r_{k+1} = \rho_1 / |z_k| < 1\) and a rotation of angle \(\abs{\tau_{k+1}} \leq \min \roost{\tau, \betam}\);
			\item \(	n_k \eqdef n_{w_k} \leq n_0\) is given as in \eqref{eq:n0}.
		\end{itemize}
		The following properties hold, by construction:
		\begin{enumerate}[(i)] 
			\item \(f_t \in G_2\) for every \(t \geq 0\),
			\item \(t \mapsto f_t\) is continuous over each interval of the form \(\opclint{N_k, N_{k+1}}\),
			\item \(f_t \pts{\zz} \notin \cone{\betap}\) and \(f_t \pts{\1} \in \cone{\betap}\) for every \(t \geq 0\).
		\end{enumerate}
		
		\begin{claim}
			For \(t \geq 0\), let
			\begin{equation}
				I_{t}^{\zz} = \mbar{ \, f_{t} \pts{\1} \, } \circ f_t \vg 
			\end{equation}	
			where \(\mbar{\cdot}\) is as in \hyperref[defi:mbar]{Definition \ref{defi:mbar}}. Then, \(\seq{I^{\zz}_{t}}{t \geq 0}\) is an \(\mathcal{I}G_3\)-isotopy.
		\end{claim}
		\begin{proof}[Proof of claim]
			Since \(f_t \in G_2\) for every \(t \geq 0\), it is clear that \(I^{\zz}_{t} \in G_3\) for every \(t \geq 0\). It is left to verify that \(t \mapsto I^{\zz}_{t}\) defines a continuous curve of homeomorphisms. This mapping is, \emph{a priori}, as continuous as \(t \mapsto f_t\). Thus, all that is needed to check is continuity from the right at the left endpoints of each interval \(\opclint{N_k, N_{k+1}}\). For \(0 < h < 1\),
			\[
				I^{\zz}_{N_k + h} = \mbar{ \, f_{N_k + h} \pts{\1} \, } \circ f_{N_k + h} = \mbar{ \, g_{h} \pts{ v_{k+1} }  \, } \circ g_{h} \circ M_{k+1} \circ f_{N_k} \pt
			\]
			But notice that
			\[
				\mbar{u_k} \circ M_{k+1}^{-1} \circ \mxy{ \, g_h \pts{v_{k+1}} }{ v_{k+1} \,} 
			\]
			is \emph{a} Möbius transformation fixing the poles and mapping \(g_h \pts{v_{k+1}}\) to \(\1\). By sharp 3-transitivity, it must be \emph{the} transformation \(\mbar{ g_{h} \pts{ v_{k+1} } }\). But, since \(g_{h} \to \id\) as \(h \to 0^{+}\), the above expression implies \(\mbar{ g_{h} \pts{ v_{k+1} } } \to \mbar{ u_{k} } \circ M_{k+1}^{-1}\) as \(h \to 0^{+}\). Consequently,
			\begin{equation*}
				I^{\zz}_{N_k + h} \to \mbar{ u_k } \circ M_{k+1}^{-1} \circ g_{0} \circ M_{k+1} \circ \, f_{N_k} = \mbar{ u_k } \circ f_{N_k} = \mbar{ \, f_{N_k} \pts{\1} \, } \circ f_{N_k} = I^{\zz}_{N_k} \pt \qedhere
			\end{equation*}
		\end{proof}
		
		\begin{claim}
			\(\orb[I^{\zz}]{\zz} \cap \Gamma = \emptyset
			 \pt\)
		\end{claim}
		\begin{proof}[Proof of claim]
			Since \(f_t \pts{\1} \in \cone{\betap}\) for every \(t \geq 0\), the transformation \(\mbar{ \, f_t \pts{\1} \, }\) may be explicitly written as \(\mbar{ \, f_t \pts{ \1 } \, } (z) = |f_t \pts{ \1 } |^{-1} \, \e{ \ii \psi} z\), where \(\abs{\psi} \leq \betap\). We also known that \(f_{t} \pts{\zz} \notin \cone{\betap}\) for every \(t \geq 0\). Thus, \(f_t \pts{\zz} = \abs{f_t \pts{\zz}} \, \e{\ii \theta}\), where \(\betap < \theta < \pi - \betap\). Consequently, \(0 < \theta + \psi < \pi\). But \(\theta \pts{I^{\zz}_{t} (\zz) } = \theta + \psi\). Therefore, the trajectory \(\orb[I^{\zz}]{\zz}\) remains on the upper half-plane without ever touching the \(x\)-axis. This establishes \textbf{(1)}.
		\end{proof}
		
		\begin{claim}
			\(f_t \pts{\1} \to \0\) as \(t \to + \infty\).
		\end{claim}
		\begin{proof}[Proof of claim]
			We may assume that \(\abs{g(z)} < \abs{z}\) for every \(z \in \stman\). Then, since \(\abs{u_{k}} = | g^{n_{k}} \pts{v_{k}}| < \abs{v_{k}}\) and \(\abs{v_{k+1}} = r_{k+1} \abs{u_k}\), we have 
			\[
				\abs{v_{k+1}} \leq \pts{{\rho_1}/{\rho_0}}^{k+1} \pt
			\]	
			But, on each interval \(\opclint{N_k, N_{k+1}}\), we have \(f_t \pts{\1} = g_{t - \floor{t}} \circ g^{\floor{t} - N_{k}} \pts{ v_{k+1} } \). As \(t\) ranges through this interval, the quantity \(t - \floor{t}\) ranges over the interval \([0,\!1]\) and the quantity \(\floor{t} - N_k\) ranges through \(\roost{0, \ldots, n_{k+1}} \subset \roost{0, \ldots, n_0}\). Thus,
			\begin{equation} \label{eq:sandwich}		
					\sup \set{ \abs{ f_t \pts{\1} } }{ t > N_k} \leq \max \set{ \abs{g^{i}_{s} \pts{z}} }{ s \in \II \, , \, 1 \leq i \leq n_0 \aand z \in \cdisk{\0}{\pts{\rho_1 / \rhoz }^{k+1}} } \pt
			\end{equation}				
			Let \(\eps > 0\) be given. Since each \(\seq{g^{i}_{t}}{t \in \II}\) is an isotopy fixing the origin, we may find \(\eta > 0\) such that  \(|{g^i_{s} \pts{z}}| < \eps\) for every \(s \in \II\) and \(1 \leq i \leq n_0\), whenever \(\abs{z} < \eta \). Consequently, if \(k_0 \in \NN\) is so large that \(\pts{\rho_1 / \rhoz}^{k_0+1} < \eta\), \eqref{eq:sandwich} implies \( \abs{f_t \pts{\1}} < \eps\) whenever \(t > N_{k_0}\).
		\end{proof}
		
		We are, now, ready to finish the proof. We know that \(f_t \pts{\zz} = g_{t - \floor{t}} \circ g^{\floor{t} - N_{k}} \pts{ w_{k+1} } \) on each interval \(\opclint{N_k, N_{k+1}}\), where \(w_k \in \KK_0\) for every \(k \in \NN\). By observing the same ranges as in the proof of the previous claim, we see that, for every \(t \geq 0\),
			\begin{equation*}
				f_t \pts{z_0} \in \KK \eqdef \set{ g^{i}_{s} \pts{z} }{ s \in \II \, , \, 1 \leq i \leq n_0 \, \aand \, z \in \KK_0 } \pt
			\end{equation*}
			But \(\KK\) is a compact set bounded away from \(\0\). Since \(f_t \pts{\1} \to \0\) as \(t \to + \infty\) and \(I^{\zz} (\zz) \in \mbar{ \, f_t \pts{\1} \, } \pts{\KK}\), the remark in \eqref{eq:auxlim} implies \textbf{(2)}.
	\end{proof}
	
	\begin{cor} \label{cor:alfaomega}
		Let \(G \subset \diff{1}{\US}\) be a proper extension of \(\mob{\US}\). Then, for each \(\zz\) not on the meridian \(\Gamma\) and each pair of distinct points \(a, b \in \roost{\0,\1,\inff}\) there exists a full-time \(\mathcal{I}G_3\)-isotopy \(\seq{\, I^{\zz}_{ab} \pts{t , \; \cdot } \,}{t \in \RR}\) such that:
		\begin{enumerate}[\bfseries (1)]
			\item the trajectory of \(\zz\) under \(I^{\zz}_{ab}\) does not intercept \(\Gamma\),			
			\item the \(\alpha\) and \(\omega\) limits of \(\zz\) satisfy \(\alphalim{ \zz }{ I^{\zz}_{ab} } = \roost{a}\) and \(\omegalim{ \zz }{ I^{\zz}_{ab} } = \roost{b}\).
		\end{enumerate}
	\end{cor}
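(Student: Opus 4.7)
The plan is to promote the \hyperref[lemma:fundamentallemma]{Fundamental Lemma} to arbitrary targets in \(\roost{\0,\1,\inff}\) by conjugating with Möbius transformations that permute those three points, and then to concatenate a forward half-time isotopy with a time-reversed one.

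For each \(c \in \roost{\0,\1,\inff}\), pick \(\mu_c \in \mob{\US}\) permuting \(\roost{\0,\1,\inff}\) as a set and sending \(\inff\) to \(c\). Such transformations exist because the six linear fractional maps permuting \(\roost{0,1,\infty}\) all have real coefficients and realize the full symmetric group on three points. Two properties are crucial. First, \(\mu_c\) preserves the meridian \(\Gamma\) setwise, because \(\Gamma\) corresponds to \(\RR \cup \roost{\infty}\) under stereographic projection. Second, conjugation by \(\mu_c\) carries the pointwise stabilizer \(G_3\) onto itself: if \(g \in G_3\) fixes each point of \(\roost{\0,\1,\inff}\), then \(\mu_c \circ g \circ \mu_c^{-1}\) fixes each point of \(\mu_c \pts{\roost{\0,\1,\inff}} = \roost{\0,\1,\inff}\), so it belongs to \(G_3\) as well.

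Given \(\zz \notin \Gamma\), the point \(\zzt = \mu_c^{-1}(\zz)\) also lies off \(\Gamma\), so the Fundamental Lemma supplies an \(\mathcal{I}G_3\)-isotopy \(\seq{I^{\zzt}_t}{t \geq 0}\) whose trajectory of \(\zzt\) misses \(\Gamma\) and whose \(\omega\)-limit is \(\roost{\inff}\). Setting
\[
	K^c_t \eqdef \mu_c \circ I^{\zzt}_t \circ \mu_c^{-1},
\]
the family \(\seq{K^c_t}{t \geq 0}\) is an \(\mathcal{I}G_3\)-isotopy (by the second property), its trajectory \(K^c_t(\zz) = \mu_c \pts{I^{\zzt}_t(\zzt)}\) remains off \(\Gamma\) (by the first property), and it tends to \(\mu_c(\inff) = c\) as \(t \to +\infty\).

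Finally, for distinct \(a,b \in \roost{\0,\1,\inff}\), glue the forward isotopy targeting \(b\) with the time-reversed parametrization of the forward isotopy targeting \(a\):
\[
	I^{\zz}_{ab}(t, \cdot) = \begin{cases} K^b_t & \text{if } t \geq 0, \\ K^a_{-t} & \text{if } t \leq 0. \end{cases}
\]
Both branches agree at \(t=0\) with \(\id\), so the assignment is jointly continuous on \(\RR \times \US\), and each time slice lies in \(G_3\). The trajectory of \(\zz\) avoids \(\Gamma\) on each half-line by construction, while \(\omegalim{\zz}{I^{\zz}_{ab}} = \roost{b}\) and \(\alphalim{\zz}{I^{\zz}_{ab}} = \roost{a}\) follow immediately from the behaviour of \(K^b\) and \(K^a\). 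All the substance sits in the Fundamental Lemma; the only obstacles in the corollary are the two elementary verifications that \(\mu_c\) preserves \(\Gamma\) setwise and normalizes \(G_3\).
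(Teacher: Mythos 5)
Your proposal is correct and is essentially the paper's own argument: both conjugate the Fundamental Lemma's isotopy by a Möbius permutation of $\roost{\0,\1,\inff}$ (the paper uses the idempotent transpositions $\T{a}{b}$, you allow any permutation $\mu_c$, which is cosmetic) and then glue the forward isotopy toward $b$ with the time-reversal of the one toward $a$. The two elementary verifications you isolate — that the conjugator preserves $\Gamma$ and normalizes $G_3$ — are exactly what makes the paper's construction work as well.
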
	
	\begin{proof}
		Let \(\zz \notin \Gamma\) and \(a, b \in \roost{\0,\1,\inff}\) be given. We denote by \(\T{a}{b}\) the (unique) idempotent Möbius transformation permuting \(a\) and \(b\) and fixing the remaining reference point. Each \(\T{a}{b}\) leaves \(\Gamma\) invariant, so neither \(\T{\inff}{b} \pts{\zz}\) nor \(\T{\inff}{a} \pts{\zz}\) lie on \(\Gamma\). The \hyperref[lemma:fundamentallemma]{Fundamental Lemma} then yields \(\mathcal{I}G_3\)-isotopies \( ( \, I^{ \T{\inff}{b} (\zz) \, }_{t})_{t \geq 0} \) and \( ( \, I^{ \T{\inff}{a} (\zz) }_{t} \, )_{t \geq 0}\) as described therein, from which we define \(I^{\zz}_{ab} : \RR \times \US \to \US\) as:
		\begin{equation}
			I^{\zz}_{ab} \pts{t, \, z} = \begin{cases} 
				\T{\inff}{b} \circ I^{ \T{\inff}{b} \pts{\zz} }_{t} \circ \T{\inff}{b} \; (z) & \text{if } t \geq 0 \vg \\[0.25em]
				\T{\inff}{a} \circ I^{ \T{\inff}{a} \pts{\zz} }_{-t} \circ \T{\inff}{a} \; (z) & \text{if } t \leq 0 \pt
			\end{cases} 
		\end{equation}
		Each mapping \(\restric{ I^{\zz}_{ab} }{ \clopint{0,+\infty} \times \US }\) and \(\restric{ I^{\zz}_{ab} }{ \opclint{-\infty,0} \times \US }\) is itself an isotopy, and they agree in the common slice \(\roost{0} \times \US\), both being equal to \(\id\) there. Consequently, \(I^{\zz}_{ab}\) defines a global jointly continuous function of the variables \(\pts{t,z} \in \RR \times \US\). The statements then follows from the \hyperref[lemma:fundamentallemma]{Fundamental Lemma}, noticing also that reversion of time turns the \(\omega\)-limit into the \(\alpha\)-limit.
	\end{proof}
	
\section{A Crossing Lemma} \label{sec:cross}

Given \(r>0\), we let \(\gamma_r : \II \to \RR\) be the path \(\gamma_r \pts{\theta} = r \e{2\pi \ii \theta}\) and \(S_r\) be its image. Then, if \(g \in \diff{1}{\RR^2}\) is a planar diffeomorphism fixing the origin, we can check by a calculation that:
		\begin{equation} \label{eq:c1conv}
			\frac{g \circ \gamma_r}{r} \xrightarrow{ \reg{1} \pts{\II} } \dif{g}{\0} \circ \gamma_1 \quad \aas \quad r \to 0^+ \pt
		\end{equation}
		
		Assume further that \(\dif{g}{\0}\) is a hyperbolic saddle matrix. Then, it maps \(S_1\) onto an ellipse having semiminor axis of length strictly smaller than one and semimajor axis of length strictly greater than one. Thus, \(S_1 \cap \dif{g}{\0} \pts{S_1}\) consists of four nontangential intersections. It then follows from \eqref{eq:c1conv}, along with the stability of nontangential intersections, that for a sufficiently small \(r_0 > 0\) (indeed, for infinitely many), \(S_{r_0} \cap g \pts{ S_{r_0} }\) contains exactly four points.
	
	This remark allows us to now analyze the behaviour of \(\Gamma\) under the action of \(G\). Before doing so, we agree that given three distinct points \(a,b,c\), we denote by \(\mabc{a}{b}{c}\) the unique Möbius transformation mapping \(a\) to \(\0\), \(b\) to \(\1\) and \(c\) to \(\inff\). If the points are all finite and nonzero, this transformation can be explicitly written as a certain cross ratio, from which it is seen that the association \(\pts{a,b,c} \mapsto \mabc{a}{b}{c}\) is continuous.	
	
	\begin{lemma} \label{lemma:4pt}
		Let \(G \subset \diff{1}{\US}\) be a proper extension of \(\mob{\US}\). Then, there exists an \(\mathcal{I}G_3\)-isotopy \(\seq{k_t}{t \in \II}\) such that \(\abs{\, k_1 \pts{\Gamma} \cap \Gamma \,} = 4\).
	\end{lemma}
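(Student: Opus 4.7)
The plan is to transport the four-point transverse intersection $S_{r_0} \cap g(S_{r_0})$ from the preceding remark onto $\Gamma$ by a Möbius conjugation, and then to post-compose with a continuous family of Möbius transformations so as to stay inside $G_3$. Concretely, first invoke the \hyperref[lemma:extlemma]{Extension Lemma} to fix an $\mathcal{I}G_2$ $\reg{1}$-isotopy $\seq{g_t}{t \in \II}$ such that $g \eqdef g_1$ has a hyperbolic saddle at $\0$ with $\dif{g}{\0} = \diag{\lambda, \lambda^{-1}}$. The preceding remark yields $r_0 > 0$ with $\abs{S_{r_0} \cap g(S_{r_0})} = 4$, and since $g$ is a diffeomorphism, $z \mapsto g(z)$ gives a bijection between $S_{r_0} \cap g^{-1}(S_{r_0})$ and $S_{r_0} \cap g(S_{r_0})$, so the former also has exactly four elements, say $q_1, q_2, q_3, q_4$, each satisfying $g(q_i) \in S_{r_0}$.

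Next, define $\phi \in \mob{\US}$ to be the unique Möbius transformation with $\phi(\0) = q_1$, $\phi(\1) = q_2$, $\phi(\inff) = q_3$; since the three target points lie on $S_{r_0}$, this forces $\phi(\Gamma) = S_{r_0}$. Setting $h_t \eqdef \phi^{-1} \circ g_t \circ \phi$ yields an isotopy in $G$ with $h_0 = \id$, and by construction $h_1(\Gamma) \cap \Gamma = \phi^{-1}\pts{g(S_{r_0}) \cap S_{r_0}}$ consists of exactly four points on $\Gamma$. Moreover, each $g(q_i) \in S_{r_0} \cap g(S_{r_0})$, so $h_1(\0), h_1(\1), h_1(\inff) \in \phi^{-1}(S_{r_0}) = \Gamma$, exhibiting these three as three of the four intersection points.

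Finally, to reach $G_3$, set $\sigma_t \eqdef \mabc{h_t(\0)}{h_t(\1)}{h_t(\inff)}$, a continuous path in $\mob{\US}$ with $\sigma_0 = \id$ (well-defined for every $t$ because $h_t$ is a bijection), and let $k_t \eqdef \sigma_t \circ h_t$. Then $k_t$ is an $\mathcal{I}G_3$-isotopy. Since $\sigma_1$ maps the three points $h_1(\0), h_1(\1), h_1(\inff) \in \Gamma$ to $\0, \1, \inff \in \Gamma$, it preserves $\Gamma$, whence
\begin{equation*}
	k_1(\Gamma) \cap \Gamma \; = \; \sigma_1 \pts{ h_1(\Gamma) \cap \Gamma }
\end{equation*}
still has exactly four elements, as required. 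The main technical point is the joint choice of the three target points of $\phi$: they must simultaneously lie on $S_{r_0}$ (so that $\phi$ sends $\Gamma$ onto $S_{r_0}$) and have their $g$-images on $S_{r_0}$ (so that the correction $\sigma_1$ preserves $\Gamma$); this is possible precisely because $S_{r_0} \cap g^{-1}(S_{r_0})$ has four points while only three are needed.
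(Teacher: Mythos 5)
Your proof is correct and is essentially the same as the paper's: after unwinding the conjugation, your $k_t = \sigma_t \circ (\phi^{-1} g_t \phi)$ equals $\mabc{g_t(q_1)}{g_t(q_2)}{g_t(q_3)} \circ g_t \circ \phi$, which is precisely the paper's isotopy $\mabc{g_t(a)}{g_t(b)}{g_t(c)} \circ g_t \circ M_0$ with $\{a,b,c\}=\{q_1,q_2,q_3\}$ and $M_0=\phi$. The only difference is presentational — you factor through the conjugate $h_t$ and observe $\sigma_1(\Gamma)=\Gamma$ explicitly, whereas the paper computes the intersection cardinality directly.
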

	\begin{proof}
		By the \hyperref[lemma:extlemma]{Extension Lemma}, it is known that there exists an \(\mathcal{I}G_2\)-isotopy \(\seq{g_t}{t \in \II}\) such that \(g \eqdef g_1\) has a hyperbolic saddle fixed point at the origin. So, we may fix any sufficiently small \(0<r_0<1\) such that \(\abs{\, S_{r_0} \cap g \pts{ S_{r_0} } \,} = 4\).
		
		Let \(a,b,c,d \in S_{r_0}\) be four consecutive points in the usual anticlockwise cyclic order, and such that \(S_{r_0} \cap g \pts{ S_{r_0} } = \roost{g(a), g(b), g(c), g(d)}\). Then, we may consider the unique \(M_0 \in \mob{\US}\) mapping the ordered triple \(\pts{\0,\1,\inff}\) onto \(\pts{a,b,c}\). Notice that \(M_0 = \mabc{a}{b}{c}^{-1}\) and that \(M_0 \pts{\Gamma} = S_{r_0}\). By defining
		\begin{equation*}
			k_t = \mabc{g_t(a)}{g_t(b)}{g_t(c)} \circ g_t \circ M_0 \; \vg \quad 0 \leq t \leq 1 \; \vg
		\end{equation*}
		we obtain an \(\mathcal{I}G_3\)-isotopy \(\seq{k_t}{t \in \II}\). Since \(M_0\) is a bijection of the sphere:
		\begin{align*}
			\abs{\, k_1 \pts{\Gamma} \cap \Gamma \,} &= \abs{\,  \pts{M_0 \circ \mabc{g(a)}{g(b)}{g(c)}} \circ g \pts{ M_0 \pts{\Gamma}} \cap M_0 \pts{\Gamma} \,} \\
			&=  \abs{\,  g \pts{S_{r_0}} \cap S_{r_0} \,} = 4 \pt \qedhere
		\end{align*}
	\end{proof}
	
	Finite points \(a,b\) on the meridian \(\Gamma\) are identified with their real counterparts on the \(x\)-axis. This induces a natural ordering, for which one may speak of the oriented interval with endpoints \(a,b\). Whenever \(a \leq b\) with respect to this ordering,
	\begin{itemize}
		\item \(\clint{a,\!b}\) denotes the arc of the meridian \(\Gamma\) with endpoints \(a,b\) and not containing \(\inff\), which corresponds to the compact interval of the \(x\)-axis with the associated endpoints;
		\item \(\clint{b,\!a}\) denotes the arc of the meridian \(\Gamma\) with endpoints \(a,b\) and containing \(\inff\), which projects onto \(\opclint{-\infty,\!a} \cup \clopint{b, \! + \infty}\).
	\end{itemize}
	If \(b = \inff\), then the corresponding arcs are defined via stereographic projection as \(\clint{\inff,\! a} = \opclint{- \infty,\! a} \cup \roost{\inff}\) and \(\clint{a, \! \inff} = \clopint{a, \! + \infty} \cup \roost{\inff}\). Lastly, open and half-open arcs of \(\Gamma\) are defined accordingly by deletion of the suitable endpoints from the corresponding closed arcs.	
	
	\begin{cor} \label{cor:outofgamma}
		Let \(G \subset \diff{1}{\US}\) be a proper extension of \(\mob{\US}\). Then, given \(z_0 \in \Gamma \setminus \roost{\0,\1,\inff}\), there exists an \(\mathcal{I}G_3\)-isotopy \(\seq{h_t}{t \in \II}\) such that \(h_1 \pts{z_0} \notin \Gamma\).
	\end{cor}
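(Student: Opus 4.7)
The plan is to bootstrap from \hyperref[lemma:4pt]{Lemma \ref{lemma:4pt}}, which supplies an \(\mathcal{I}G_3\)-isotopy \(\seq{k_t}{t \in \II}\) with \(\abs{k_1(\Gamma) \cap \Gamma} = 4\). Since each \(k_t \in G_3\) fixes \(\roost{\0,\1,\inff} \subset \Gamma\), three of those four intersection points must be the reference points themselves, leaving exactly one extra \(p \in \Gamma \setminus \roost{\0,\1,\inff}\). Setting \(z^{\ast} \eqdef k_1^{-1}(p)\) and using that \(k_1\) is a bijection, I arrive at a clean characterization: for \(w \in \Gamma\), \(k_1(w) \in \Gamma\) if and only if \(w \in \roost{\0,\1,\inff, z^{\ast}}\). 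So whenever the given \(z_0\) differs from \(z^{\ast}\), the choice \(h_t = k_t\) already yields \(h_1(z_0) \notin \Gamma\), and I am left only with the case \(z_0 = z^{\ast}\).

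To deal with this remaining case, I would invoke the anharmonic subgroup \(S \subset \mob{\US}\) consisting of the six Möbius transformations that permute the reference set \(\roost{\0,\1,\inff}\); every such \(M\) leaves the meridian \(\Gamma\) invariant, because \(\Gamma\) is the unique circle through those three points. For any \(M \in S\), the conjugation \(\tilde{k}_t \eqdef M^{-1} \circ k_t \circ M\) is again an \(\mathcal{I}G_3\)-isotopy: \(\tilde{k}_0 = \id\) is clear, each \(\tilde{k}_t\) lies in \(G\) because \(M \in \mob{\US} \subset G\), and since \(M\) permutes \(\roost{\0,\1,\inff}\) while each \(k_t\) fixes this set pointwise, \(\tilde{k}_t\) also fixes each reference point. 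Running the same characterization through \(\tilde{k}_1\) instead of \(k_1\) shows that \(\tilde{k}_1(z^{\ast}) \notin \Gamma\) provided only that \(M(z^{\ast}) \neq z^{\ast}\).

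The decisive step — and what I expect to be the only subtle point — is then exhibiting such an \(M\). Here I would use either three-cycle of \(S\), namely \(z \mapsto 1/(1 - z)\) or \(z \mapsto (z-1)/z\); both have the common fixed-point equation \(z^2 - z + 1 = 0\), whose discriminant is negative, so neither of them has any real fixed point at all, and in particular neither can fix \(z^{\ast} \in \Gamma \cong \hat{\RR}\). Taking \(M\) to be either three-cycle immediately gives \(M(z^{\ast}) \neq z^{\ast}\), and \(h_t \eqdef \tilde{k}_t\) closes the argument.
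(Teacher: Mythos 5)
Your proposal is correct and follows essentially the same two-step route as the paper: first invoke Lemma \ref{lemma:4pt} to get the four intersection points, then conjugate by an element of the anharmonic group when \(z_0\) happens to be the one "bad" preimage. The only (cosmetic) difference is that the paper handles the remaining case by assuming "for concreteness" that \(z_0 \in \opint{\0,\1}\) and appealing to the fact that \(\restric{\mabc{\1}{\inff}{\0}}{\Gamma}\) is a fixed-point-free interval exchange, while you justify the same freeness directly via the discriminant of \(z^2 - z + 1 = 0\); this makes it explicit that one conjugator works for every position of \(z_0\) on \(\Gamma\), so no case split is needed.
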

	\begin{proof}
		In the language of \hyperref[lemma:4pt]{Lemma \ref{lemma:4pt}}, let \(k_1 \pts{\Gamma} \cap \Gamma = \roost{\0, \1, \inff, \wz}\).  If \(k_1 (\zz) \neq \wz\), it suffices to take \(h_t = k_t\) for every \(t\). Otherwise, assume for concreteness that \(\zz \in \opint{\0,\!\1}\) -- analogous reasonings apply to the other cases. By defining \(	h_t = \mabc{\1}{\inff}{\0} \circ k_t \circ \mabc{\1}{\inff}{\0}^{-1}\), we obtain a new \(\mathcal{I}G_3\)-isotopy. Since \( \restric{\mabc{\1}{\inff}{\0}}{\Gamma}\) is an interval exchange transformation free of fixed points, \(\zzt \eqdef \mabc{\1}{\inff}{\0}^{-1} (\zz) \neq \zz\). Thus, \(k_1 \pts{\zzt} \notin \Gamma\). But \(\mabc{\1}{\inff}{\0}\) leaves \(\Gamma\) invariant, so \(h_1 \pts{\zz} = \mabc{\1}{\inff}{\0} \pts{ k_1 (\zzt) } \notin \Gamma\) as well.
	\end{proof}
	
	\begin{lemma}[The Crossing Lemma] \label{lemma:crlemma}
		Let \(G \subset \diff{1}{\US}\) be a proper extension of \(\mob{\US}\). Then, there exist a point \(\zh\) on the open segment \(\opint{\0,\!\1}\) of \(\Gamma\) and an \(\mathcal{I}G_3\)-isotopy \(\seq{J_t}{t \in \clint{-1,1}}\) such that:
		\begin{enumerate}[\bfseries (1)]
			\item the trajectory of \(\zh\) under \(J\) only intercepts \(\Gamma\) on the arc \(\opint{\0,\! \1}\),
			\item \(J_{-1} \pts{\zh} \in \hm\) and \(\, J_{\, 1} \pts{\zh} \in \hp\).
		\end{enumerate}
	\end{lemma}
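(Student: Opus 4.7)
The plan is to leverage Lemma \ref{lemma:4pt} after strengthening its conclusion so that the fourth intersection point \(w \in k_1(\Gamma)\cap\Gamma\) lies on the target arc \(\opint{\0,\!\1}\). In the proof of that lemma, the choice of ordered triple \((a,b,c)\) among the four consecutive intersections of \(S_{r_0} \cap g(S_{r_0})\) determines where \(w\) lands on \(\Gamma\). First, I would take \((a,c,d)\) instead---still in anticlockwise cyclic order, with \(b\) as the odd one out---so that a cyclic-order calculation yields \(w = \mabc{g(a)}{g(c)}{g(d)}(g(b)) \in \opint{\0,\!\1}\). The symmetric calculation on \(M_0^{-1}\) gives \(z_w \eqdef k_1^{-1}(w) = \mabc{a}{c}{d}(b) \in \opint{\0,\!\1}\), and the intersection at \(w\) remains transversal as in the original proof.

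With this strengthened \(k_t\) in hand, I would propose the two-sided isotopy
\[
J_t \eqdef \begin{cases} k_t & \text{if } 0 \leq t \leq 1\vg \\ k_{-t}^{-1} & \text{if } -1 \leq t \leq 0\pt \end{cases}
\]
This is a continuous \(\mathcal{I}G_3\)-isotopy with \(J_0 = \id\). For \(\zh \in \opint{\0,\!\1}\) chosen close to \(w\), the image \(k_1(\zh)\) lies near \(k_1(w) \notin \Gamma\), hence in a fixed hemisphere---WLOG \(\hp\) after possibly reversing time. Meanwhile, by the transversality of \(k_1^{-1}(\Gamma)\) with \(\Gamma\) at \(z_w\), the point \(k_1^{-1}(\zh)\) sits near \(z_w\) on \(k_1^{-1}(\Gamma)\setminus\Gamma\), with opposite hemispheres assigned to the two sides of \(w\). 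Choosing \(\zh\) on the side giving \(k_1^{-1}(\zh) \in \hm\) then secures condition \textbf{(2)}.

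To satisfy condition \textbf{(1)}, I would apply a first-exit reparametrization. Setting \(t_0^{+} \eqdef \inf\{\,t \in \clint{0,\!1} : k_t(\zh) \notin \Gamma\,\}\) and defining \(t_0^{-}\) analogously for \(k_t^{-1}\), for \(t \in \clint{0,\!t_0^{+}}\) the trajectory \(k_t(\zh)\) lies on \(\Gamma\); since \(\0,\1,\inff\) are fixed by every element of \(G_3\) and therefore never appear in \(\orb[k]{\zh}\), this portion must remain on the single arc \(\opint{\0,\!\1}\) containing \(\zh\). Picking \(t_1 > t_0^{+}\) small enough to precede the first re-entry of \(\orb[k]{\zh}\) into \(\Gamma\), and analogously \(t_1' > t_0^{-}\), the rescaled isotopy
\[
\tilde J_s = \begin{cases} k_{s\, t_1} & \text{if } 0 \leq s \leq 1\vg \\ k_{-s\, t_1'}^{-1} & \text{if } -1 \leq s \leq 0 \end{cases}
\]
would have its trajectory of \(\zh\) intercepting \(\Gamma\) only within \(\opint{\0,\!\1}\).

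The hard part will be ensuring that the hemisphere first entered by \(\orb[k]{\zh}\) after \(t_0^{+}\) is indeed \(\hp\), and likewise that the inverse trajectory first enters \(\hm\) after \(t_0^{-}\); otherwise the rescaled endpoints \(\tilde J_{\pm 1}(\zh)\) land in the wrong hemispheres. I expect to handle this by taking \(\zh\) very close to \(w\) and exploiting the \(\reg{1}\)-continuity of \(k_t\) from the Extension Lemma to control the initial motion---specifically, by arguing that the trajectory passes transversally through \(\Gamma\) at \(t = 0\) (so that \(t_0^{+} = 0\)) and therefore enters precisely the hemisphere dictated by the transversal crossing analysis used to construct \(\zh\).
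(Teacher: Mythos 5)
The core difficulty you flag in your last paragraph is indeed a genuine gap, and the proposed fix does not work. After the first-exit reparametrization, the endpoints of the trimmed isotopy are \(k_{t_1}(\zh)\) and \(k^{-1}_{t_1'}(\zh)\) for small \(t_1, t_1'\), and there is no control over which hemisphere these lie in. The appeal to ``transversality at \(t=0\)'' cannot rescue this: at \(t=0\) you have \(k_0 = \id\), so \(k_t(\zh) = \zh\) for \(t=0\), and the velocity of the trajectory \(t \mapsto k_t(\zh)\) at \(t = 0\) is neither nonzero nor transverse to \(\Gamma\) in general. In fact, \(\reg{1}\)-continuity of \(t \mapsto k_t\) (continuity with respect to the \(\reg{1}\) topology on maps) says nothing about differentiability in the \(t\) variable, let alone about the direction of the initial motion. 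The transversality you \emph{do} have -- that the Jordan curves \(k_1(\Gamma)\) and \(\Gamma\) cross at \(w\), and that \(k_1^{-1}(\Gamma)\) and \(\Gamma\) cross at \(z_w\) -- concerns the time-one map only; it imposes no constraint on the short-time behaviour of the isotopy. A secondary, unaddressed point: you assume \(k_1(w) \notin \Gamma\), but this can fail exactly when \(w = z_w\), i.e.\ when \(k_1\) also fixes \(w\); you would need to rule this out or handle it.

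The paper avoids this difficulty entirely by anchoring the trimming not around \(t=0\) but around the interval during which the trajectory last leaves \(\hm\) and first enters \(\hp\). Concretely, it starts (via \hyperref[cor:outofgamma]{Corollary \ref{cor:outofgamma}}) from a point \(\uz\) in the \emph{interior} of \(\hm\) whose image under \(h_1\) lies in \(\hp\), considers \(\gamma(t) = h_t(\uz)\), and sets \(\tm = \sup\gamma^{-1}(\hm)\), \(\tp = \inf\bigl(\gamma^{-1}(\hp) \cap \clint{\tm,1}\bigr)\). On the compact interval \(\clint{\tm,\tp}\) the trajectory lies on \(\Gamma\), avoids \(\{\0,\1,\inff\}\) (these are fixed by every element of \(G_3\)), and hence sits inside a single component of \(\Gamma \setminus \{\0,\1,\inff\}\), at a uniform positive distance from those three points. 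Enlarging the interval slightly on each side stays close to that same component while landing in \(\hm\) on the left and \(\hp\) on the right, giving both conditions \textbf{(1)} and \textbf{(2)} at once, after a Möbius conjugation if the component is not \(\opint{\0,\!\1}\). Your two-sided path from \(J_{-1}(\zh) \in \hm\) to \(J_1(\zh) \in \hp\) would also admit such a last-exit/first-entry analysis, so your construction can likely be salvaged; but the trimming must be anchored at those exit/entry times, not at \(t = 0\).
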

	\begin{proof}
		By \hyperref[cor:outofgamma]{Corollary \ref{cor:outofgamma}}, we may fix an \(\mathcal{I}G_3\)-isotopy \(\pts{h_t}_{t \in \II}\) and a point \(\zz \in \Gamma\) such that \(h_1 \pts{ \zz } \notin \Gamma\). Without loss of generality, assume \(h_1 \pts{\zz} \in \hp\). By continuity, there is some \(\uz \in \hm\) near \(\zz\) such that \(h_1 \pts{ \uz } \in \hp\) as well. Consider the continuous path \(\gamma \colon \II \to \	\US\) defined as \(\gamma (t) = h_t \pts{\uz}\). The sets \(\gamma^{-1} \pts{\hm}\), \(\gamma^{-1} \pts{\Gamma}\) and \(\gamma^{-1} \pts{\hp}\) form a partition of \(\II\). Let
		\[
			\tm = \sup \gamma^{-1} \pts{\hm} \quad \aand  \quad \tp = \inf \gamma^{-1} \pts{\hp} \cap \clint{\tm, \! 1} \pt
		\]
		We have that \(0 < \tm \leq \tp < 1\), and that \(\clint{\tm, \tp} \subset \gamma^{-1} \pts{\Gamma}\). Since \(\gamma\) cannot intercept the points \(\roost{\0,\1,\inff}\), it follows that \(\gamma \pts{ \clint{\tm, \tp} }\) is a compact arc contained within one of the three connected components of \(\Gamma \setminus \roost{\0, \1, \inff}\). It is, thus, at a positive distance \(\rho > 0\) from those three points. Let \(\delta > 0\) be such that 
		\begin{equation} \label{eq:deltapm}
			\begin{gathered}
				s \in \II \, \aand \, \abs{s - \tm} < \delta \; \text{imply} \; d \pts{\gamma (s), \gamma (\tm)} < \rho \,  \text{;} \\
				s \in \II \, \aand \, \abs{s - \tp} < \delta \; \text{imply} \; d \pts{\gamma (s), \gamma (\tp)} < \rho \pt		
			\end{gathered}
		\end{equation}		
		
		\begin{figure}[htb]
			\includegraphics[scale=1.1]{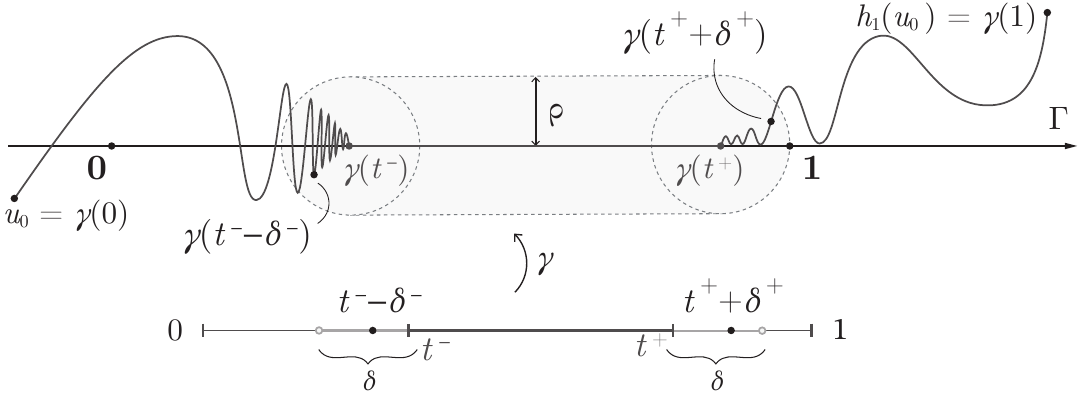}
			\caption{After leaving the eastern hemisphere, the isotopy path of \(\uz\) under \(h\) may remain trapped on a compact segment of \(\Gamma \setminus \roost{\0,\1,\inff}\) before entering the western hemisphere.}
			\label{fig:crossing}
		\end{figure}
		
		Then, we may find points of the form \(\tm - \dtm \in \gamma^{-1} \pts{\hm}\) and \(\tp + \dtp \in \gamma^{-1} \pts{\hp}\) satisfying \(\tm - \delta < \tm - \dtm < \tm \leq \tp < \tp + \dtp < \tp + \delta\) -- as conveyed in \hyperref[fig:crossing]{Figure \ref{fig:crossing}} -- and let \(\sigma: \clint{-1,1} \to \clint{\tm - \dtm, \tp + \dtp}\) be any increasing bijection such that \(\sigma (0)\) is the midpoint of the interval \(\clint{\tm, \tp}\). 
		
		Defining \(\tilde{J}: \clint{-1,1} \times \US \to \US\) as \(\tilde{J} \pts{t,z} = \tilde{J}_t (z) = h_{\sigma (t)} \circ h_{\sigma (0)}^{-1} \, (z)\), 	we readily see that it is an \(\mathcal{I}G_3\)-isotopy. By letting \(\zt = \gamma \pts{ \sigma (0) } = h_{\sigma (0) } \pts{\uz} \in \Gamma\), we have that \(\tilde{J}_{\, 1} \pts{ \zt } = \gamma (\tp + \dtp) \in \hp\) and \(\tilde{J}_{-1} \pts{ \zt } = \gamma (\tm - \dtm) \in \hm\). Also, by observing the range of \(\sigma\), the trajectory of \(\zt\) under \(\tilde{J}\) is \(\orb[\tilde{J}]{\zt} = \gamma \pts{ \clint{ \tm - \dtm , \tp + \dtp } }\). In particular, it follows from the choice of \(\rho\) and from \eqref{eq:deltapm} that any point in \(\orb[\tilde{J}]{\zt} \cap \Gamma\) lies on the same connected component of \(\Gamma \setminus \roost{\0, \1, \inff}\) as the segment \(\gamma \pts{ \clint{\tm, \tp} }\), which is precisely the component containing \(\zt\).	
		
		If this component happens to be \(\opint{\0,\!\1}\), as pictured in \hyperref[fig:crossing]{Figure \ref{fig:crossing}}, we simply let \(J = \tilde{J}\) and \(\zh = \zt\). Otherwise, consider \(\Gamma\) endowed with the cyclic order induced by the real line. If \(\zt \in \opint{a,\! b}\), let \(\roost{c} = \roost{\0,\1,\inff} \setminus \roost{a,\! b}\). Since \(\mabc{a}{b}{c}\) applies \(\opint{a,\! b}\) onto \(\opint{\0,\! \1}\) in an orientation-preserving way, one may take \(\zh = \mabc{a}{b}{c} \pts{\zt}\) and \(J = \mabc{a}{b}{c} \circ \tilde{J}_{\, t} \circ \mabc{a}{b}{c}^{-1}\) to obtain the desired point and isotopy.
	\end{proof}
	
\section{Conclusions} \label{sec:closing}	

\subsection{Proof of Theorem A}

	From now on, a proper extension \(G \subset \diff{1}{\US}\) of \(\mob{\US}\) is fixed throughout, and we let the point \(\zh \in \Gamma\) and the \(\mathcal{I}G_3\)-isotopy \(\seq{J_t}{t \in [-1,1]}\) be as in the \hyperref[lemma:crlemma]{Crossing Lemma}. Upon denoting \(\zhm = J_{-1} \pts{\zh}\) and \(\zhp = J_{1} \pts{\zh}\), we define the set
	\begin{equation} \label{eq:chi} 
		\chi = \cl{ \orb[I^{\zhm}]{\zhm} \cup \orb[J]{\zh} \cup \orb[I^{\zhp}]{\zhp} } \pt 
	\end{equation}	
	Above, \(I^{\zhm}\) and \(I^{\zhp}\) are the isotopies yielded by the \hyperref[lemma:fundamentallemma]{Fundamental Lemma} when considering the points \(\zhm \in \hm\) and \(\zhp \in \hp\). On the sphere, \(\chi\) is a continuum. Indeed, 
	\(
		\orb[I^{\zhm}]{\zhm} \cup \orb[J]{\zh} \cup \orb[I^{\zhp}]{\zhp}
	\)
	is connected, as it is the union of (connected) curves with points in common, while its closure is automatically compact on the compact space \(\US\), and consists of adjoining \(\roost{\inff}\) to this union, as we suggest in \hyperref[fig:continuum]{Figure \ref{fig:continuum}}.
		
	\begin{figure}[htb]
		\centering
		\includegraphics[scale=1.1]{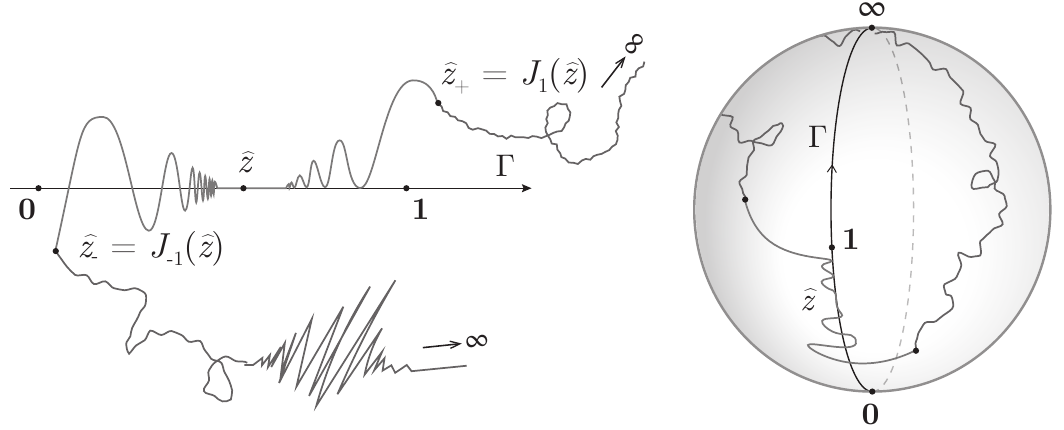}
		\caption{The continuum \(\chi\) is constructed by gluing together isotopy trajectories with points in common, one of which is bounded away from \(\inff\) and two of which are known to accumulate at \(\inff\), and then taking their closure.}
		\label{fig:continuum}
	\end{figure}
		
	\begin{auxclaim} \label{lemma:continuum}
		The set \(\chi\) is a continuum separating \(\opint{\inff,\! \0}\) and \(\opint{\1, \! \inff}\) in the following sense: whenever \(\alpha : \II \to \US\) is a path such that \(\alpha(0) \in \opint{\inff,\! \0}\) and \(\alpha(1) \in \opint{\1, \inff}\), we have that \(\alpha \pts{\II} \cap \chi \neq \emptyset\).
	\end{auxclaim}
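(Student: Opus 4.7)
The plan is to derive a contradiction via the Jordan Curve Theorem. Suppose for some path $\alpha : \II \to \US$ with $\alpha(0) \in \opint{\inff, \0}$ and $\alpha(1) \in \opint{\1, \inff}$ one had $\alpha(\II) \cap \chi = \emptyset$. Since $\inff \in \chi$, $\alpha$ avoids $\inff$ and lies in $\CC$, so by compactness $\alpha(\II) \subset \cdisk{\0}{R_0}$ for some $R_0 > 0$. I would fix a radius $R$ strictly larger than $R_0$, than $|\zhm|$ and $|\zhp|$, than $\max_{t \in \clint{-1,1}} |J_t(\zh)|$, and than $1$, so that $R$ itself belongs to $\opint{\1, \inff}$ on $\Gamma$, the compact trajectory $\beta = \orb[J]{\zh}$ sits strictly inside $\odisk{\0}{R}$, and $\alpha(\II) \cap \{|z|=R\} = \emptyset$.

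The core geometric step is to build a Jordan curve $C \subset \CC$ that separates $\alpha(0)$ from $\alpha(1)$ while being disjoint from $\alpha(\II)$. By the \hyperref[lemma:fundamentallemma]{Fundamental Lemma}, the trajectories $\eta_\pm = \orb[I^{\zh_\pm}]{\zh_\pm}$ each accumulate at $\inff$ and hence meet $\{|z|=R\}$; take $w_\pm$ to be their first exit points, which lie strictly in $\hm$ and $\hp$ respectively because $\eta_\pm \cap \Gamma = \emptyset$. Using the classical fact that every continuous path between two distinct points in a Hausdorff space contains a simple arc between them, I extract simple sub-arcs $\tilde\eta_\pm$ from $\zh_\pm$ to $w_\pm$ lying inside the corresponding hemisphere and touching $\{|z|=R\}$ only at their endpoints, together with a simple sub-arc $\tilde\beta \subset \beta$ from $\zhm$ to $\zhp$ whose intersection with $\Gamma$ remains in $\opint{\0, \1}$. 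Concatenating the reverse of $\tilde\eta_-$ with $\tilde\beta$ and then with $\tilde\eta_+$, and applying the same Hausdorff extraction once more, I obtain $\hat\Pi \subset \chi$: a simple arc from $w_-$ to $w_+$ with $\hat\Pi \cap \Gamma \subset \opint{\0,\1}$ and $\hat\Pi \cap \{|z|=R\} = \roost{w_-, w_+}$. Finally I close $\hat\Pi$ by adjoining the simple clockwise sub-arc $\rho$ of $\{|z|=R\}$ running from $w_+$ to $w_-$ through the real point $R$; then $C = \hat\Pi \cup \rho$ is a Jordan curve with $C \cap \Gamma \subset \opint{\0,\1} \cup \roost{R}$.

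To conclude I would invoke the parity-of-crossings form of the Jordan Curve Theorem. The ray along $\Gamma$ from $\0$ toward $-\infty$ misses $C$ entirely, placing $\0$ in the unbounded component of $\CC \setminus C$; the ray from $\1$ toward $+\infty$ crosses $C$ transversally exactly once, at $R$, placing $\1$ in the bounded component. Since $\opint{\inff, \0} \cup \roost{\0}$ is a connected arc of $\Gamma$ disjoint from $C$, the point $\alpha(0)$ sits in the same (unbounded) component as $\0$; analogously $\alpha(1) \in \opint{\1, R}$ shares the (bounded) component of $\1$ via the connected arc $\roost{\1} \cup \opint{\1, R}$. But $\alpha(\II)$ avoids $\hat\Pi \subset \chi$ by hypothesis and avoids $\rho$ because $\alpha(\II) \subset \odisk{\0}{R}$, so $\alpha(\II) \cap C = \emptyset$, contradicting JCT. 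The delicate point, and the one I expect to be the main technical obstacle, is the passage from the \emph{a priori} non-simple continuous images $\eta_\pm$ and $\beta$ to a single simple arc $\hat\Pi \subset \chi$ that inherits the correct intersection behaviour with both $\Gamma$ and $\{|z|=R\}$; this requires applying the Hausdorff simple-arc extraction twice together with the careful choice of $R$ ensuring that each sub-arc meets the bounding circle only at its prescribed endpoints.
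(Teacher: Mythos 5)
Your argument is correct, and it takes a genuinely different route from the paper. The paper assumes $\alpha$ simple (by a Zorn argument), builds a \emph{proper line} $\ell$ by splicing $\restric{\alpha}{\clint{\tm,\tp}}$ with two straight rays going off to infinity, and observes that outside a large disk $\ell$ coincides with $\Gamma$ (with matching orientation). Because the two trajectories accumulate at $\inff$ from opposite hemispheres, $\chi$ has points on both sides of $\ell$; by connectedness of $\chi$ it must meet $\ell$, and the constraint $\chi \cap \Gamma \subset \opint{\0,\!\1}$ forces the meeting to happen on the middle piece of $\ell$, i.e.\ on $\alpha\pts{\II}$. Your proof is dual: rather than building a separating line from $\alpha$ and showing $\chi$ crosses it, you build a Jordan curve $C$ from a simple sub-arc $\hat\Pi\subset\chi$ together with a circular arc $\rho$ of $\roost{\abs{z}=R}$, and use the parity form of the Jordan Curve Theorem to show $\alpha$ must cross $C$. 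What each buys: your version does \emph{not} require $\alpha$ simple (only its connectedness matters in the final step), but it does require the classical simple sub-arc extraction applied twice, together with bookkeeping to guarantee $\hat\Pi$ meets $\Gamma$ only in $\opint{\0,\!\1}$ and $\roost{\abs{z}=R}$ only at $w_{\pm}$. You correctly flag this as the technical crux, and it does go through: the relevant containment properties (staying in $\cdisk{\0}{R}$, hitting the boundary circle only at $w_{\pm}$, hitting $\Gamma$ only in $\opint{\0,\!\1}$) are all stable under passing to subsets, so they survive each extraction. The paper's approach sidesteps the extraction entirely by leaving $\chi$ as a continuum and using only its connectedness, at the cost of assuming $\alpha$ simple and having to verify that $\ell$ is indeed a proper simple line.
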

	\begin{proof}
		If \(\inff \in \alpha \pts{\II}\) there is nothing to prove. Otherwise, suppose that \(\alpha\) never passes through \(\infty\). By a standard Zorn Lemma argument, it can also be assumed simple. We let
		\[
			\tm = \sup \alpha^{-1} \opclint{\inff,\! \0 }  \quad \aand \quad \tp = \inf \pts{ \alpha^{-1} \clopint{\1, \! \inff} } \cap \clint{\tm,\! 1} \pt
		\]
		Then, \(0 \leq \tm < \tp \leq 1\), and any intersection between \(\alpha (t)\) and \(\Gamma\) for \(\tm < t < \tp\) happens on the open segment \(\opint{\0,\! \1}\). 
		
		On the plane, we define a continuous mapping \(\ell : \RR \to \RR^2\) as pictured in \hyperref[fig:line]{Figure \ref{fig:line}}, and explicitly given by:
		\begin{equation*}
			\ell (t) = \begin{cases}
				\ds \frac{1-t \hphantom{^{-}} }{1-\tm} \pts{ \alpha (\tm) - \zh } + \zh & \: \text{if } t \leq \tm \vg \\[0.75em]
				\alpha (t) & \: \text{if } \tm < t < \tp \vg \\[0.75em]
				\ds \frac{1+t \hphantom{^{+}} }{1+\tp} \pts{ \alpha (\tp) - \zh} +\zh &\: \text{if } t \geq \tp \pt
			\end{cases} 
		\end{equation*}
		The mapping \(\ell\) is a line or, in other words, a simple and proper path. The orientation it inherits from the real line automatically divides the plane into  two open and connected components, the \emph{right} \(\rgt{\ell}\) and the \emph{left} \(\lft{\ell}\) of \(\ell\), plus their common boundary \(\ell\).
		
		\begin{figure}[htb]
			\includegraphics[scale=1.1]{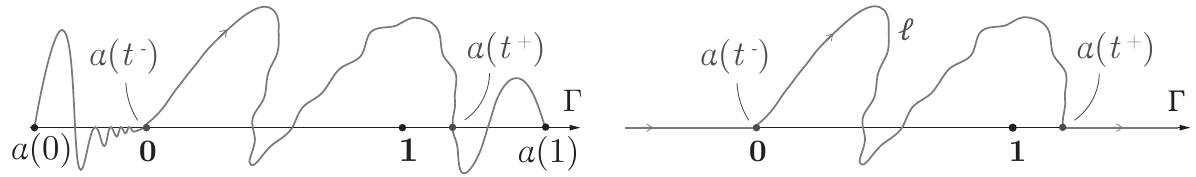}
			\caption{The segment of \(\alpha\) comprehended between the instant it leaves \(\opclint{\inff,\0}\) and the instant it enters \(\clopint{\1, \inff}\) can be glued to \(\Gamma\) -- traversed the usual way -- to generate a line \(\ell\).}
			\label{fig:line}
		\end{figure}		
				
		We now consider the compact set \(\alpha \pts{ \clint{\tm, \tp} }\), and fix some closed disk \(D \subset \RR^2\) fully containing it. Then, \(\clint{\0,\!\1} \subset D\). Thus, if we now consider the open set \(\mathcal{O} = \RR^2 \setminus D\), we see from the expression of \(\ell\) that \(\ell \cap \mathcal{O} = \Gamma \cap \mathcal{O}\). Also, \(\ell\) traverses this intersection with the same orientation as \(\Gamma\). It follows that \(\lft{\ell} \cap \mathcal{O} = \lft{\Gamma} \cap \mathcal{O} = \hp \cap \mathcal{O}\) and \(\rgt{\ell} \cap \mathcal{O} = \rgt{\Gamma} \cap  \mathcal{O} = \hm \cap \mathcal{O}\).
		
		From the \hyperref[lemma:fundamentallemma]{Fundamental Lemma}, \(\orb[ I^{\zhp} ]{\zhp}\) is fully contained within \(\hp\) and accumulates at \(\roost{\inff}\). Since \(\mathcal{O}\) is a neighbourhood of \(\inff\) on the sphere, it follows that \(\orb[ I^{\zhp} ]{\zhp} \cap \lft{\ell} \neq \emptyset\). Analogously, \(\orb[ I^{\zhm} ]{\zhm} \cap \rgt{\ell} \neq \emptyset\). This translates to \(\chi \cap \lft{\ell} \neq \emptyset\) and \(\chi \cap \rgt{\ell} \neq \emptyset\). Therefore, the line \(\ell\) intercepts the continuum \(\chi\).
		
		Let \(\tbr \in \RR\) be such that \(\ell (\tbr) \in \chi\). Then, it must be the case that \( \tbr \in \opint{\tm, \tp} \). Indeed, on the one hand, \(\ell \pts{\tbr} \in \clopint{\1, \! \inff}\) if \(\tbr \geq \tp\) and \(\ell \pts{\tbr} \in \opclint{\inff, \! \0}\) if \(\tbr \leq \tm\). On the other hand, any intersection between \(\chi\) and \(\Gamma\) must take place on the open segment \(\opint{\0, \! \1}\), by the \hyperref[lemma:crlemma]{Crossing Lemma} and the \hyperref[lemma:fundamentallemma]{Fundamental Lemma}. However, \(\tm < \tbr < \tp\) means that \(\ell \pts{\tbr} = \alpha \pts{\tbr}\), yielding an intersection between \(\alpha \pts{\II}\) and \(\chi\), as claimed.
	\end{proof}
	
	Next, consider the equivalence relation \(\gtequiv\), as described in \hyperref[defi:gequiv]{Definition \ref{defi:gequiv}}. Clearly, \(\acc{\0} = \roost{\0}\), \(\acc{\1} = \roost{\1}\) and \(\acc{\inff} = \roost{\inff}\). Our goal is to show that \(\zz \in \acc{\zh}\) for any \(\zz \in \US \setminus \roost{\0,\1,\inff}\). By \hyperref[cor:outofgamma]{Corollary \ref{cor:outofgamma}}, it suffices to consider \(\zz \notin \Gamma\).
	
	\begin{auxclaim} \label{lemma:acc}
		If \(\zz \notin \Gamma\), then \(\zz \in \acc{\zh}\).
	\end{auxclaim}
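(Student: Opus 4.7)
The aim is to show $\zz \in \acc{\zh}$ for an arbitrary $\zz \notin \Gamma$. The plan is to exhibit an $\mathcal{I}G_3$-isotopy starting at the identity whose trajectory through $\zz$ meets the continuum $\chi$ at some point distinct from $\inff$. Once such a point $y$ is produced, \hyperref[lemma:equivcross]{Lemma \ref{lemma:equivcross}} yields $\acc{\zz} = \acc{y} = \acc{\zh}$, since applying that lemma first to the crossing isotopy $J$ (which reaches both $\zhm$ and $\zhp$ from $\zh$) and then to the Fundamental Lemma isotopies $I^{\zhm}$ and $I^{\zhp}$ shows that all of $\chi \setminus \roost{\inff}$ already lies inside $\acc{\zh}$.

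To build such a trajectory, apply \hyperref[cor:alfaomega]{Corollary \ref{cor:alfaomega}} to $\zz$ with $a = \0$ and $b = \1$. One obtains a full-time $\mathcal{I}G_3$-isotopy $I^{\zz}_{\0 \1}$ whose trajectory $T = \orb[I^{\zz}_{\0 \1}]{\zz}$ avoids $\Gamma$ and satisfies $\alphalim{\zz}{I^{\zz}_{\0 \1}} = \roost{\0}$, $\omegalim{\zz}{I^{\zz}_{\0 \1}} = \roost{\1}$. Since both limit sets are singletons in the compact Hausdorff space $\US$, the map $t \mapsto I^{\zz}_{\0 \1}(t, \zz)$ extends continuously at the time infinities to a path from $\0$ to $\1$. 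Now fix any $p \in \opint{\inff,\! \0}$ and $q \in \opint{\1,\! \inff}$ and concatenate into a single continuous path $\alpha : \II \to \US$ as follows: an arc along $\Gamma$ from $p$ to $\0$, the extended $T$ from $\0$ through $\zz$ to $\1$, then an arc along $\Gamma$ from $\1$ to $q$. Since $\alpha(0) \in \opint{\inff,\! \0}$ and $\alpha(1) \in \opint{\1,\! \inff}$, \hyperref[lemma:continuum]{Auxiliary Claim \ref{lemma:continuum}} forces $\alpha(\II) \cap \chi \neq \emptyset$.

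The crux is to argue that the intersection necessarily occurs on the open trajectory $T$. First observe that $\chi \cap \Gamma \subset \opint{\0,\! \1} \cup \roost{\inff}$: the \hyperref[lemma:fundamentallemma]{Fundamental Lemma} ensures $\orb[I^{\zhm}]{\zhm}$ and $\orb[I^{\zhp}]{\zhp}$ miss $\Gamma$, the \hyperref[lemma:crlemma]{Crossing Lemma} confines $\orb[J]{\zh} \cap \Gamma$ to $\opint{\0,\! \1}$, and the closure in the definition of $\chi$ adds only the accumulation point $\inff$. The two $\Gamma$-arcs appended to $T$ lie entirely in $\opint{\inff,\! \0} \cup \roost{\0, \1} \cup \opint{\1,\! \inff}$, which is disjoint from $\opint{\0,\! \1} \cup \roost{\inff}$; moreover, neither $\0$ nor $\1$ belongs to $\chi$, for every element of $G_3$ fixes these points while the three generating points $\zh, \zhm, \zhp$ are all distinct from $\0, \1, \inff$. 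Therefore any $y \in \alpha(\II) \cap \chi$ lies on $T$, satisfies $y \neq \inff$, and falls on one of the three open trajectories making up $\chi \setminus \roost{\inff}$; a final application of \hyperref[lemma:equivcross]{Lemma \ref{lemma:equivcross}} between $T$ and that trajectory closes the argument. The main subtlety anticipated is exactly this avoidance of $\inff$: any choice involving $\inff$ as an endpoint in Corollary \ref{cor:alfaomega} would force the augmenting arcs to pass arbitrarily close to $\inff$, and the separation intersection could then, a priori, occur only at $\inff$, which is not accessible from $\zh$ in $G_3$.
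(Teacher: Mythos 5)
Your proposal is correct and reaches the conclusion through the same two tools the paper uses — the separation property of the continuum $\chi$ (Claim \ref{lemma:continuum}) and the path-crossing Lemma \ref{lemma:equivcross} — but constructs the test path $\alpha$ differently. The paper fixes a radius $r>0$ with $\cdisk{\0}{r}$ and $\cdisk{\1}{r}$ disjoint from $\chi$ (and from each other, $\zz$, and $\inff$), truncates the trajectory of $\zz$ at the boundaries of those disks, and caps it with straight-line segments inside the disks to reach $\opint{\inff,\!\0}$ and $\opint{\1,\!\inff}$; because the caps live entirely in sets disjoint from $\chi$, the intersection produced by Claim \ref{lemma:continuum} falls on the trajectory for free. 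You instead compactify the trajectory by adjoining its $\alpha$- and $\omega$-limit points $\0$ and $\1$, cap both ends with arcs of $\Gamma$, and then exclude spurious intersections by observing that $\chi \cap \Gamma \subset \opint{\0,\!\1}\cup\roost{\inff}$ and that neither $\0$ nor $\1$ belongs to $\chi$. Both routes are valid and of comparable length: the paper's disk construction needs only that $\chi$ misses small neighbourhoods of $\0$ and $\1$, whereas yours leans more directly on the detailed description of $\chi\cap\Gamma$ already established by the Fundamental and Crossing Lemmas and avoids introducing the auxiliary radius $r$.
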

	\begin{proof}
		Let \(\chi\) be as in \eqref{eq:chi}. We fix \(r>0\) such that \(\cdisk{\0}{r} \cap \chi =\cdisk{\1}{r} \cap \chi = \cdisk{\0}{r} \cap \cdisk{\1}{r} = \emptyset\), and also such that both closed disks are disjoint from \(\roost{\zz}\) and \(\roost{\inff}\).	 Consider the \(\mathcal{I}G_3\)-isotopy \(I^{\zz}_{\0 \1}\) yielded by \hyperref[cor:alfaomega]{Corollary \ref{cor:alfaomega}}. We encounter \(S < 0\) \emph{maximal} such that \(I^{\zz}_{\0\1} (S, \zz) \in \fr{ \cdisk{\0}{r} }\) and \(T>0\) \emph{minimal} such that \(I^{\zz}_{\0\1} (T, \zz) \in \fr{ \cdisk{\1}{r} }\). Let \(\alpha: \II \to \RR^2\) be given by:
	\begin{equation} \label{eq:interpol}
		\alpha \, (t) = \begin{cases} 
			4t \; I^{\zz}_{\0\1} \pts{S, \, \zz} - \pts{1 - 4t} \, \ds \frac{r}{2} & \, \text{if } \ds 0 \leq t \leq \frac{1}{4} \vg \\[0.75em]
			I^{\zz}_{\0\1} \pts{ \, 2t \pts{T-S} + \ds \frac{3S - T}{2} \, , \, \zz} & \, \text{if } \ds \frac{1}{4} \leq t \leq \frac{3}{4} \vg \\[0.75em]
			\pts{4 - 4t} \; I^{\zz}_{\0\1} \pts{T, \, \zz} + \pts{4t - 3} \, \pts{ \1 + \ds \frac{r}{2} } & \, \text{if } \ds \frac{3}{4} \leq t \leq 1 \pt
		\end{cases}
	\end{equation}	 	
	Geometrically, \(\alpha\) departs from a point \(\alpha (0) \in \opint{\inff,\!\0} \cap \cdisk{\0}{r}\), and follows on a straight line until it reaches a certain point of \(\zz\)'s trajectory in the disk's boundary. This intersection point is such that \(\zz\)'s trajectory returns to this first disk at most finitely many times in the future. From it, the path \(\alpha\) follows \(\zz\)'s path until it first reaches the boundary of the disk \(\cdisk{\1}{r}\). Then, \(\alpha\) moves on a straight line until it reaches a point \(\alpha(1) \in \opint{\1, \!\inff} \cap \cdisk{\1}{r}\). This process is conveyed in \hyperref[fig:thma]{Figure \ref{fig:thma}}.
	
		\begin{figure}[htb]
			\centering
			\includegraphics[scale=1.1]{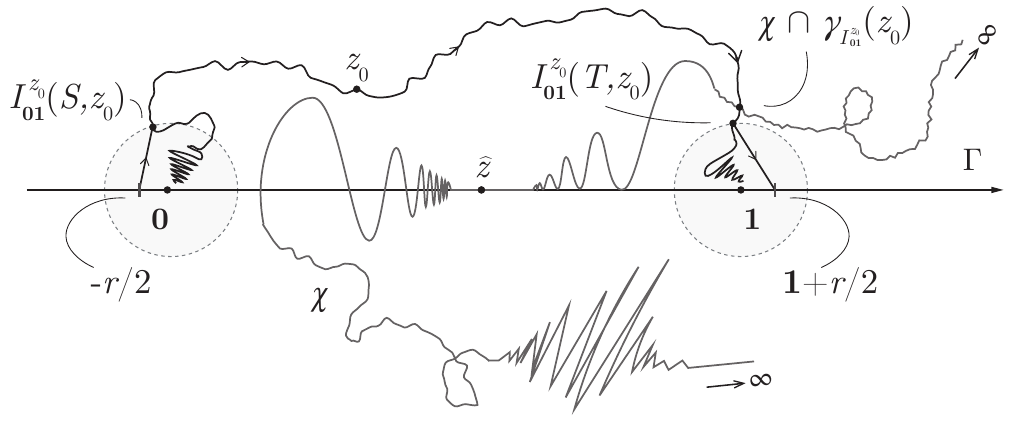}
			\caption{After leaving a compact neighbourhood of \(\0\) disjoint of \(\chi\) and before entering a neighbourhood of \(\1\) disjoint from \(\chi\), the path of \(\zz\) under \(I^{\zz}_{\0\1}\) must cross the continuum \(\chi\).}
			\label{fig:thma}
		\end{figure}
		
		In particular, \(\alpha (0) \in \opint{\inff,\!\0}\), \(\alpha(1) \in \opint{\1, \!\inff}\) and \(\alpha \pts{\II} \subset \US \setminus \roost{\inff}\). By \hyperref[lemma:continuum]{Claim \ref{lemma:continuum}}, we have that \(\alpha\) intercepts \(\chi \setminus \roost{\inff}\). But, since the segments \(\alpha \pts{ \clint{0,\!{1}/{4}}}\) and \(\alpha \pts{ \clint{{3}/{4},\! 1} }\) are contained within disks disjoint from \(\chi\), we must have \( \alpha \pts{ \, \opint{{1}/{4},\! {3}/{4}} \, } \cap \pts{ \chi \setminus \roost{\inff} } \neq \emptyset \). However, it is seen from \eqref{eq:interpol} that \(\alpha \pts{ \, \opint{{1}/{4},\! {3}/{4}} \, } \subset \orb[ I^{\zz}_{\0\1} ]{\zz}\). By \hyperref[lemma:equivcross]{Lemma \ref{lemma:equivcross}}, this is readily seen to imply \(\zz \in \acc{\zh}\).		
	\end{proof}
	
	This is enough to derive the (arc) 4-transitivity of \(G_0\) for, if \(\pts{a,b,c,d}\) and \(\pts{p,q,r,s}\) are two given lists of distinct points on the sphere, let \(\zz = \mabc{a}{b}{c} \pts{d}\) and \(\wz = \mabc{p}{q}{r}^{-1} (s)\). Then, neither \(\zz\) nor \(\wz\) belong to \(\roost{\0,\1,\inff}\) and thus, by \hyperref[lemma:acc]{Claim \ref{lemma:acc}}, both \(\zz\) and \(\wz\) belong to \(\acc{\zh}\). This implies that there is some \(\mathcal{I}G_3\)-isotopy \(\seq{f_t}{t \in \II}\) such that \(f_1 \pts{\zz} = \wz\). Since \(\mob{\US}\) is a path connected group, in particular \(\mabc{p}{q}{r}^{-1} \circ f_1 \circ \mabc{a}{b}{c}\) lies in \(G_0\) and maps \(\pts{a,b,c,d}\) onto \(\pts{p,q,r,s}\). Therefore, the arc 4-transitivity definition is seen to be satisfied.
	
\subsection{Discussion on Theorem B}

	Let us start by making our terminology precise: relative to four distinguished points \(P = \roost{p_0, \ldots, p_3}\) on the sphere, a closed loop \(\alpha: \II \to \US\) based at \(p_0\) will be referred to as \emph{topological figure 8} if it defines in the fundamental group \(\pi_1 \pts{\US \setminus \roost{p_1, p_2, p_3} ; \,  p_0}\) an element in the same homotopy class of a path of the form \(\bar{\zeta_2} \ast \zeta_1\). Here, each \(\zeta_i\) is a Jordan curve separating \(p_i\) from the remaining two points, while leaving \(p_i\) in the connected component of its complement locally at its left, and \(\ast\), \(\bar{\cdot}\) denote the usual operations of concatenation and inversion. When \(p_3\) is placed at infinity, we may think of the prototypical planar figure 8, consisting of the wedge of two circles based at \(p_0\), each of them traversed once with contrary orientations, and leaving \(p_1\) and \(p_2\) in opposite components of their complements, as pictured in \hyperref[fig:eight]{Figure \ref{fig:eight}}. That is the situation we shall be aiming at.

	\begin{figure}[htb]
		\includegraphics[scale=1.1]{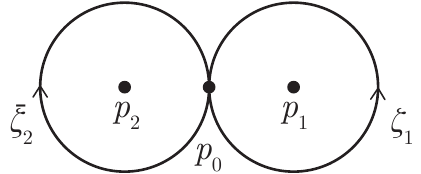}
		\caption{A prototypical topological figure 8 on the plane. A fourth fixed point \(p_3\) is placed at infinity.}
		\label{fig:eight}
	\end{figure}	
	
	As before, a proper extension \(G \subset \diff{1}{\US}\) of \(\mob{\US}\) is fixed throughout. Thus, if \(\chi\) is defined as in \eqref{eq:chi}, we may construct an \(\mathcal{I}G_3\) isotopy \(\seq{K_t}{t\in \RR}\) such that \(\chi \setminus \roost{\inff}\) is realized as the trajectory of \(\zh\) under \(K\). It is given explicitly by
	\begin{equation*}
		K_t = \begin{cases}
			I_{-1-t}^{\zhm} \circ J_{-1} & \, \text{if } t \leq -1 \vg \\[0.25em]
			J_t & \, \text{if } -1 \leq t \leq 1 \vg \\[0.25em]
			I_{-1+t}^{\zhp} \circ J_1 & \, \text{if } t \geq 1 \vg
		\end{cases} 
	\end{equation*}
	where the isotopy \(J\) and the family of points \(\zh\) are described in details in the \hyperref[lemma:crlemma]{Crossing Lemma}, while the isotopies \(I\) are as in the \hyperref[lemma:fundamentallemma]{Fundamental Lemma}. In particular, based on the descriptions given by such lemmas, we obtain a new \(\mathcal{I}G_3\) isotopy \(L_t = \T{\0}{\inff} \circ K_t \circ \T{\0}{\inff}\) and a point \(\yh = \T{\0}{\inff} (\zh) \in \opint{\1,\!\inff}\) such that:
\begin{enumerate}[(i)]
	\item \(\omegalim{\yh}{L} = \alphalim{\yh}{L} = \roost{\0}\),
	\item \(\set{L_t (\yh)}{t \geq 1} \subset \hm\) and \(\set{L_t (\yh)}{t \leq -1} \subset \hp\),
	\item the trajectory \(\orb[L]{\yh}\) only intersects the meridian \(\Gamma\) on the open arc \(\opint{\1, \! \inff}\).
\end{enumerate}

	\begin{auxclaim} \label{lemma:homot}
		There exist a point \(\wh \in \opint{\0, \! \1}\) and an \(\mathcal{I}G_3\) isotopy \(\seq{\varphi_t}{t \in \II}\) such that \(\orb[\varphi]{\wh} \cong \xi_1 \, \rel \roost{\0, \1, \inff}\), where \(\cong\) denotes fixed endpoints homotopy, and \(\xi_1\) is a circle passing through \(\wh\) -- traversed once clockwise while leaving \(\1\) on its right and both \(\0, \inff\) on its left.
	\end{auxclaim}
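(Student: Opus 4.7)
I would take $\wh = \zh$, the point in $\opint{\0,\1}$ from the \hyperref[lemma:crlemma]{Crossing Lemma}, and build $\varphi$ as the concatenation of three $\mathcal{I}G_3$-isotopies whose composite trajectory of $\zh$ is a Jordan loop encircling $\1$. The outer pieces are reparametrizations of $J$ restricted to $\clint{0,\!1}$ and of $J^{-1}$ restricted to $\clint{-1,\!0}$: the first moves $\zh$ to $\zhp \in \hp$ along a path crossing $\Gamma$ only on $\opint{\0,\1}$, and the last moves $\zhm \in \hm$ back to $\zh$ along a similar path.

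The middle piece, which is the heart of the argument, must move $\zhp$ to $\zhm$ along a trajectory that crosses $\Gamma$ only in $\opint{\1,\inff}$. For this I would combine two ingredients: first, the symmetric isotopy $L = \T{\0}{\inff} \circ K \circ \T{\0}{\inff}$ with its associated point $\yh \in \opint{\1,\inff}$, whose $L$-trajectory is known to meet $\Gamma$ only on $\opint{\1,\inff}$; and second, the accessibility established in the proof of \hyperref[thm:thma]{Theorem A}, by which every point of $\US \setminus \roost{\0,\1,\inff}$ lies in $\acc{\zh}$. Concretely, an accessibility $\mathcal{I}G_3$-isotopy first carries $\zhp$ to a point slightly on the $\hp$-side of $\yh$; a short reparametrization of $L$ then carries this point across $\opint{\1,\inff}$ and slightly into $\hm$; and a second accessibility $\mathcal{I}G_3$-isotopy carries the resulting image onto $\zhm$. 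The three sub-pieces are glued into a single $\mathcal{I}G_3$-isotopy using the device of \hyperref[lemma:equivcross]{Lemma \ref{lemma:equivcross}}.

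Concatenating everything yields an $\mathcal{I}G_3$-isotopy $\varphi$ on $\II$ with $\varphi_1(\zh) = \zh$. By construction, $\orb[\varphi]{\zh}$ meets $\Gamma$ only on $\opint{\0,\1} \cup \opint{\1,\inff}$ and is disjoint from $\roost{\0,\1,\inff}$. A Jordan-curve separation argument analogous to the one used in \hyperref[lemma:continuum]{Claim \ref{lemma:continuum}} then shows that this closed loop separates $\1$ from $\roost{\0,\inff}$, placing $\orb[\varphi]{\zh}$ in the required homotopy class of $\xi_1$; the clockwise orientation is ensured, if necessary, by time-reversing $\varphi$. I expect the principal obstacle to be the homotopy-class verification: one must control the number of transverse intersections of the orbit with $\Gamma$, ensuring that $\opint{\1,\inff}$ is crossed exactly once (rather than an even number of times, which would trivialize the loop). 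This is handled by restricting the reparametrized $L$-segment to a small neighbourhood of a single transversal crossing and by arranging the two intermediate accessibility isotopies so that their trajectories stay, respectively, in $\hp$ and in $\hm$ --- a constraint available because the relevant trajectories from Claim \ref{lemma:acc} can be kept off $\Gamma$ outside the arcs singled out by the Fundamental and Crossing Lemmas.
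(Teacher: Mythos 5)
Your plan correctly identifies the two ingredients the paper also uses — the symmetric isotopy \(L = \T{\0}{\inff} \circ K \circ \T{\0}{\inff}\) with its privileged point \(\yh \in \opint{\1,\inff}\), and the continuum separation device of \hyperref[lemma:continuum]{Claim \ref{lemma:continuum}} — but the middle piece of your loop rests on a step that the available tools do not support. You propose to carry \(\zhp\) to a point on the \(\hp\)-side of \(\yh\) and then a point on the \(\hm\)-side of \(\yh\) to \(\zhm\) via two \emph{accessibility} \(\mathcal{I}G_3\)-isotopies from \hyperref[lemma:acc]{Claim \ref{lemma:acc}}, and you require their trajectories to stay in \(\hp\) and in \(\hm\) respectively. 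But the accessibility isotopies produced by \hyperref[lemma:acc]{Claim \ref{lemma:acc}} and \hyperref[lemma:equivcross]{Lemma \ref{lemma:equivcross}} are gluings that travel along \(\chi\), whose \(J\)-segment crosses \(\Gamma\) on \(\opint{\0,\!\1}\); nothing in that construction lets you confine the trajectory to one hemisphere, nor does \hyperref[lemma:fundamentallemma]{Lemma \ref{lemma:fundamentallemma}} guarantee that the tails of \(\orb[I^{\zhp}]{\zhp}\) pass near \(\yh\). Without this control, your middle piece can introduce unwanted crossings of \(\opint{\0,\!\1}\) or cross \(\opint{\1,\!\inff}\) an even number of times, and the homotopy class — which you correctly flag as the principal obstacle — is not pinned down.

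The paper avoids the difficulty by never invoking a black-box accessibility isotopy: it applies the separation argument of \hyperref[lemma:continuum]{Claim \ref{lemma:continuum}} directly to the curve \(\lambda(t) = L_t(\yh)\), locates parameters \(\tm < \tp\) where \(\lambda\) meets \(\chi\) in \(\hp\) and in \(\hm\) with \(\lambda\) staying off \(\chi\) in between, and reads off \(\wh = K_c(\zh) \in \opint{\0,\!\1}\) from the intermediate-value crossing of \(\Gamma\) along \(K\). The three pieces of \(\varphi\) are then segments of the \emph{known} trajectories of \(\zh\) under \(K\) and of \(\yh\) under \(L\), each with a fully controlled intersection pattern with \(\Gamma\). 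Note in particular that the freedom to take \(\wh = K_c(\zh)\) rather than insisting on \(\wh = \zh\) is essential: it is what lets the loop close up while using only segments of \(K\) and \(L\). Your fixed choice \(\wh = \zh\) is the root of the need for the auxiliary accessibility pieces, and hence of the gap.
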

	\begin{proof}
		Let \(\seq{L_t}{t \in \RR}\) and \(\yh\) be as in the previous discussion, and consider the path \(\lambda (t) = L_t \pts{ \yh }\) describing the trajectory of \(\yh\) under \(L\). Since \(\lambda (0) = \yh \in \Gamma\) and, by item (ii) above, \(\lambda(-1) \in \hp\), we can fix \(	\ttil = \min \lambda^{-1} (\Gamma) > -1\)	and look at the restricted curve \(\tilde{\lambda} = \restric{\lambda}{(-\infty, \ttil]}\). 
		
		On the one hand, item (i) above implies \(\tilde{\lambda} \pts{t_N}\) in the same connected component of \(\compl{\chi}\) as \(\0\), for sufficiently negative \(t_N < 0\). Thus, it can be joined to \(\opint{\0, \!\1}\) by a path not intercepting \(\chi\). On the other hand, \(\tilde{\lambda} \pts{\,\ttil \,} \in \opint{\1, \!\inff}\), by item (iii). It follows from \hyperref[lemma:continuum]{Claim \ref{lemma:continuum}} that \(\tilde{\lambda}\) must intercept \(\chi \setminus \roost{\inff}\) at least once. Therefore, we may define \(	\tm = \max \lambda^{-1} ( \chi ) \cap (-\infty, \ttil] < \ttil\). Then, \(\lambda \pts{\tm} = L_{\tm} \pts{\yh} \in \chi \cap \hp\). Proceeding analogously for times greater than \(\max \lambda^{-1} (\Gamma)\), but using the parts of items (i) and (ii) concerning positive times, we obtain \(\tp > 0\) such that \(\lambda \pts{\tp} = L_{\tp} \pts{\yh} \in \chi \cap \hm\). Also, \(\tilde{\lambda} (t) = L_t \pts{\yh}\) does not intercept \(\chi\) for \(\tm < t < \tp\).
		
		Now, let \(\seq{K_t}{t \in \RR}\) be the isotopy under which the trajectory of \(\zh\) is \(\chi \setminus \roost{\inff}\). This means that \(\lambda \pts{\tm} = L_{\tm} \pts{\yh} = K_a \pts{\zh}\) and that \(\lambda \pts{\tp} = L_{\tp} \pts{\yh} = K_b \pts{\zh}\) , for some \(a,b \in \RR\). In particular, \(K_a \pts{\zh}\) and \(K_b \pts{\zh}\) lie in opposite hemispheres, so there must be an intermediate parameter \(c\) for which \(K_c \pts{\zh} \in \Gamma\). Upon defining \(\wh = K_c \pts{\zh}\), we actually know from the \hyperref[lemma:crlemma]{Crossing Lemma} that \(\wh \in \opint{\0, \! \1}\). We finally define:
		\[
			\varphi_t = \begin{cases}
				K_{c + 3t (b-c)} \circ K_{c}^{-1} & \text{if } 0 \leq t \leq \dfrac{1}{3} \vg \\[0.75em]
				L_{2\tm - \tp + 3t \pts{\tp - \tm}} \circ L^{-1}_{\tm} \circ K_b \circ K_{c}^{-1} & \text{if } \dfrac{1}{3} \leq t \leq \dfrac{2}{3} \vg \\[0.75em]
				K_{3a - 2c + 3t \pts{c - a}} \circ K^{-1}_{a} \circ L_{\tp} \circ L_{\tm}^{-1} \circ K_b \circ K_{c}^{-1} & \text{if } \dfrac{2}{3} \leq t \leq 1 \pt
			\end{cases} 
		\]
		The family \(\pts{\varphi_t}_{t \in \II}\) is readily seen to form an \(\mathcal{I}G_3\) isotopy. Also, \(\varphi_1 \pts{\wh} = \wh\), so the path \(\gamma(s) = \varphi_s \pts{\wh}\) describing the trajectory of the point \(\wh\) is indeed a closed loop based at \(\wh\). Along \(\gamma\), we have four special points, distinguished during the previous constructions:
		\begin{itemize}
			\item the starting and terminal point \(\wh = \gamma (0) = \gamma (1) \in \opint{\0,\!\1}\),
			\item the point \(\gamma(1/3) = K_b \pts{\zh} = L_{\tm} \pts{\yh} \in \hp\),
			\item the point \(\yh \in \opint{\1,\!\inff}\), which is of the form \(\yh = \gamma (\sbar)\), for some \(1/3 < \sbar < 2/3\), and
			\item the point \(\gamma(2/3) = L_{\tp} \pts{\yh} = K_a \pts{\zh} \in \hm\).
		\end{itemize}
		We also know that \(\restric{\gamma}{[0,1/3]}\) and \(\restric{\gamma}{[2/3,1]}\) only intercept \(\Gamma\) at \(\opint{\0,\!\1}\), and that \(\restric{\gamma}{[1/3,\sbar]}\) and \(\restric{\gamma}{[\sbar,2/3]}\) only intercept \(\Gamma\) at \(\opint{\1,\!\inff}\). These facts are enough to show, by combining at most two consecutive straight line homotopies, that \(\gamma\) is homotopic on the plane -- with fixed basepoint \(\wh\) and relative to \(\roost{\0,\1}\) -- to a simple closed curved (say, polygonal) turning once clockwise around \(\1\) and leaving \(\0\) outside of it, as suggested by \hyperref[fig:thmb]{Figure \ref{fig:thmb}}. Moving back to the sphere by adjoining \(\inff\), this amounts to the claimed statement.
	\end{proof}

	\begin{figure}[htb]
		\includegraphics[scale=1.1]{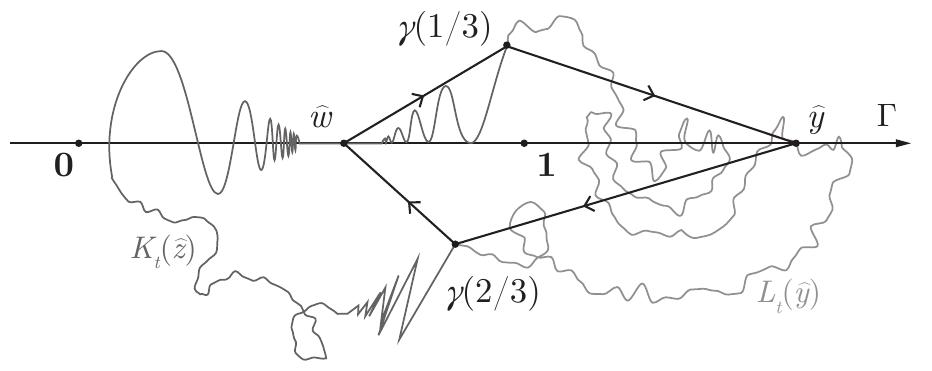}
		\caption{The geometrical information gathered about the paths \(t \mapsto K_t \pts{\zh}\) and \(t \mapsto L_t \pts{\yh}\) is enough to prove that the trajectory of the point \(\wh\) under \(\varphi\) is homotopic to a closed loop turning around \(\1\), but not \(\0\), with fixed endpoint \(\wh\).}
		\label{fig:thmb}
	\end{figure}
	
	We can now complete the construction, for if \(\wh' = \T{\0}{\1} \pts{\wh}\), \hyperref[thm:thma]{Theorem A} yields \(h\) in the identity component of \(G_3\) such that \(\wh' = f\pts{\wh}\). We then consider the following \(\mathcal{I}G_3\) isotopy:
	\[
		\psi_t = h^{-1} \circ \T{\0}{\1} \circ \varphi_t \circ \T{\0}{\1} \circ h \pt
	\]	 
	The trajectory of \(\wh\) under \(\psi\) is described by the curve \(\eta (s) = h^{-1} \pts{ \T{\0}{\1} \circ \gamma (s) }\), where \(\gamma\) describes \(\orb[\varphi]{\wh}\), as in the proof of \hyperref[lemma:homot]{Claim \ref{lemma:homot}}. Since \(\T{\0}{\1}\) leaves \(\roost{\0,\1,\inff}\) invariant, this implies \(\T{\0}{\1} \circ \gamma \cong \T{\0}{\1} \circ \xi_1 \, \rel \roost{\0, \1, \inff}\). But we know that \(\T{\0}{\1} \circ \xi_1\) is also a circle -- traversed once clockwise, while leaving \(\0\) on its right and both \(\1,\inff\) on its left. Thus, since \(h \in G_3\), \(\xi_2 = h^{-1} \circ \T{\0}{\1} \circ \xi_1\) is a clockwise oriented Jordan curve based at \(\wh\), leaving \(\0\) on its right and both \(\1,\inff\) on its left. Also, \(\eta \cong \xi_2 \, \rel \roost{\0, \1, \inff}\). If we now define the \(\mathcal{I}G_3\) isotopy
	\[
		F_t = \begin{cases}
			\varphi_{1-2t} \circ \varphi_{1}^{-1} & \text{if } 0 \leq t \leq \dfrac{1}{2} \vg \\[0.5em]
			\psi_{2t - 1} \circ \varphi_1^{-1} & \text{if } \dfrac{1}{2} \leq t \leq 1 \vg
		\end{cases} 
	\]
	then, clearly 
	\[
		\orb[F]{\wh} = \orb[\psi]{\wh} \ast \overline{\orb[\varphi]{\wh}} = \eta \ast \bar{\gamma} \cong \xi_2 \ast \bar{\xi_1} \; \rel \roost{\0, \1, \inff} \vg
	\]
	meaning that (upon considering \(\zeta_i = \bar{\xi_i}\)) the diffeomorphism \(f \eqdef F_1\) fixes \(P = \roost{\wh, \0, \1, \inff}\), whilst the trajectory \(\orb[F]{\wh}\) of \(\wh\) under the isotopy \(\seq{F_t}{t \in \II}\) is a topological figure 8 relative to this set.
	
	With respect to \(f\), we now evoke the Nielsen-Thurston Classification Theorem, as presented in Section 7.5 of \cite{Franks02}. It takes as input the homeomorphism \(f: \US \to \US\) and the set \(P\), for which the four punctured sphere \(\US \setminus P\) has negative Euler characteristic. Then, it yields 
	\begin{itemize}
		\item a homeomorphism \(\Phi: \US \to \US\) such that \(f\) and \(\Phi\) are isotopic relative to \(P\) or, in other words, such that every point of \(P\) is fixed throughout the isotopy between \(f\) and \(\Phi\),
		\item a (possibly empty) system of closed simple loops \(\alpha_1, \ldots, \alpha_r\), called \emph{reducing curves}.
	\end{itemize}		
	Such curves come equipped with pairwise disjoint tubular neighbourhoods \(V_i\), disjoint from \(P\), such that the connected components of \(\US \setminus \bigcup_{i=1}^{r} V_i\) group into invariant cycles, restricted to which either \(\Phi\) is of \emph{finite order} -- meaning some power of it equals the identity -- or \(\Phi\) is \emph{pseudo-Anosov} relative to \(P\).
	
	Briefly, pseudo-Anosov means that \(\Phi\) (or its restriction to the appropriate component) admits a pair of invariant transverse foliations -- one of which is expanded with ratio \(\beta > 1\) and the other of which is contracted with ratio \(\beta^{-1}\) (in a measure-theoretical precise sense). \emph{Relative to \(P\)} means that the points in \(P\) are kept fixed under \(\Phi\), and manifest as one-prong singularities of the foliations.
	
	If the reducing system of curves is empty, we notice that no power \(f^m\) can be isotopic to the identity relative to \(P\). Indeed, let \(\alpha\) be any simple closed loop separating two pairs of points of \(P\). Then, \(f^m \pts{\alpha}\) would have to be (freely) homotopic to \(\alpha\) relative to \(P\). But, considering the \(\mathcal{I}G_3\) isotopy \(\seq{F^m_t}{t \in \II}\), the family \(F_t^{m} (\alpha)\) would have to enclose the figure 8 in a fashion incompatible with such fact.
	
	Upon interpreting points in \(P\) as punctures (as implied by \cite{Matsuoka05} and Section 7.6 of \cite{Franks02}), each connected component of \(\US \setminus \bigcup_{i=1}^{r} V_i\) must also have negative Euler characteristic. Thus, if the reducing system of curves is nonempty, \(\wh\) must be enclosed by \(\alpha_i\) along with some other reference point \(a \in \roost{\0, \1, \inff}\). Then, the argument from the previous paragraph also shows that \(\Phi\) cannot be of finite order in any component.

	In short, \(\Phi\) must be a pseudo-Anosov map, which is classically known to have strictly positive topological entropy \(\htop{\Phi} = \log \beta > 0\). But the behaviour of mappings isotopic to pseudo-Anosov homeomorphisms is described in \cite{Handel85}. More specifically, Theorem 2 therein implies the statement of \hyperref[thm:thmb]{Theorem B}, and also \(\htop{f} \geq \htop{\Phi}\). Thus, \(f \in G_{0}\) must have positive topological entropy as well.

\section{Characterization of the conformal group in terms of transitivity} \label{sec:appendix}

	In this section, we show that sharp 3-transitivity is a defining property of \(\mob{\US}\) among the homogeneous groups of diffeomorphisms, as more precisely stated in \hyperref[thm:thmc]{Theorem C}. Before proving it, we stablish two auxiliary lemmas. The first shows that the subgroup \(G_2\) fixing the poles possesses a property that we already know \textit{a priori} to be held by the actual \(\mob{\US}\). The second establishes conformality at the poles in this subgroup.

\begin{lemma} \label{lemma:parallels}
	Let \(G \subset \homeo{\US}\) be a sharply 3-transitive homogeneous group. Then, the subgroup \(G_2\) permutes parallels.
\end{lemma}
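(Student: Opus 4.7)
My plan is to use sharp transitivity to give $G_2$ the structure of a connected two-dimensional topological group and then classify it.

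First, sharp $3$-transitivity forces $G_3 = \roost{\id}$, so $G_2$ acts sharply $1$-transitively on $\US \setminus \roost{\0, \inff}$. The evaluation map $e \colon g \mapsto g(\1)$ is therefore a continuous bijection $G_2 \to \US \setminus \roost{\0, \inff}$; a standard argument (for instance, via Effros-type descent for Polish topological group actions, or equivalently via the homogeneous-space identification $G_2 / G_3 \cong \US \setminus \roost{\0, \inff}$) promotes $e$ to a homeomorphism. Hence $G_2$ is a connected, locally Euclidean topological group whose underlying space is an annulus.

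By the Gleason-Montgomery-Zippin theorem (the positive resolution of Hilbert's fifth problem), $G_2$ is then a Lie group. The classification of connected two-dimensional Lie groups yields exactly four candidates up to isomorphism: $\RR^2$, $\UC \times \RR$, the torus $\UC \times \UC$, and the affine group $\mathrm{Aff}_+(\RR)$. Of these, $\RR^2$ and $\mathrm{Aff}_+(\RR)$ are simply connected while $\UC \times \UC$ has $\pi_1 = \ZZ \oplus \ZZ$; matching the homotopy type with the annulus (connected, $\pi_1 = \ZZ$) singles out $G_2 \cong \UC \times \RR$. This group is abelian, and its only closed connected compact subgroup is the distinguished $\UC$ factor. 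Since the rotations $\set{R_\theta}{\theta \in \RR}$ form a closed connected compact circle subgroup of $G_2$, they coincide with this $\UC$ factor.

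The rotation subgroup thus lies in the center of $G_2$: one has $g R_\theta g^{-1} = R_\theta$ for every $g \in G_2$ and every $\theta$. Applied to any $p \in \US \setminus \roost{\0, \inff}$, this yields $g(R_\theta p) = R_\theta(g p)$, so $g$ sends the parallel through $p$---the $\set{R_\theta}{\theta \in \RR}$-orbit of $p$---onto the parallel through $g(p)$, which is the assertion of the lemma.

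The principal technical input is Hilbert's fifth problem together with the two-dimensional Lie classification. The main obstacle to a strictly more elementary route is the dichotomy between the abelian $\UC \times \RR$ and the non-abelian $\mathrm{Aff}_+(\RR)$; bypassing Lie theory would require extracting normality of the rotation subgroup directly from, say, Kerékjártó's theorem combined with sharp $3$-transitivity, but naive attempts along those lines quickly prove circular.
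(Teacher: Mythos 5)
Your route via Hilbert's fifth problem is a genuinely different strategy from the paper's, but there is a real gap at the very first step. Promoting the continuous evaluation bijection \(e \colon G_2 \to \US \setminus \roost{\0,\inff}\) to a homeomorphism by an Effros-type open-mapping argument (or, equivalently, by the homogeneous-space identification \(G_2 / G_3\)) requires the acting group to be Polish, which for a subgroup of \(\homeo{\US}\) amounts to being closed. The lemma imposes no closure hypothesis on \(G\), and it is used in Theorem C precisely in that generality. Without closedness you cannot conclude that \(G_2\) is homeomorphic to the annulus, nor that it is locally compact --- which Gleason--Montgomery--Zippin also requires --- and the ensuing \(\pi_1\)-and-Lie-classification argument has nothing to stand on. A priori, the uniform topology on \(G_2\) could be strictly finer than the topology pulled back along \(e\), and then the Lie-theoretic machinery simply does not engage; the later appeal to ``closed connected compact subgroup'' has the same issue, since it presupposes the very Lie topology being argued for.

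For comparison, the paper's proof is elementary and holds for an arbitrary subgroup: assuming \(\lambda = g(\gamma)\) is not a circle, it exhibits a rotation \(R\) with \(R(\lambda) \cap \lambda \neq \emptyset\) yet \(R(\lambda) \neq \lambda\), extracts \(p, q \in \gamma\) with \((g^{-1}\circ R \circ g)(p) = q\), and composes with a rotation \(U\) sending \(q\) to \(p\) to obtain an element of \(G_2\) fixing \(q\), \(\0\), \(\inff\); sharp 3-transitivity forces it to be the identity, giving \(R(\lambda) = \lambda\), a contradiction. Sharp 3-transitivity is thus cashed in directly, with no appeal to completeness, local compactness, or Lie theory. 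Your approach would be sound under an added closure hypothesis, but as the lemma is stated it needs one that is not there.
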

\begin{proof}
	Recalling that \(G_2\) is the subgroup fixing \(\0\) and \(\inff\), \(g \in G_2\) translates to a planar homeomorphism fixing the origin, for which we must prove that circles centered at the origin are mapped onto circles centered at the origin. Given one such circle \(\gamma\), let \(\lambda = g \pts{\gamma}\) be its image. Then, \(\lambda\) is a Jordan curve, containing the origin in its interior. If \(\lambda\) is \emph{not} a circle, it contains points \(\pmin\) and \(\pmax\) such that
	\[
		\abs{\pmin} = \min_{p \in \lambda} \abs{p} < \max_{p \in \lambda} \abs{p} = \abs{\pmax} \pt
	\]
	For each polar angle \(0 \leq \theta < 2\pi\), the semiradius \(\rad{\theta} = \set{t \e{\ii \theta}}{t \geq 0}\) intercepts \(\lambda\) in a compact set \(\lambda_{\theta}\), in such a way that \(\lambda = \bigcup_{0 \leq \theta < 2\pi} \lambda_{\theta}\). For some \(\theta_0\), it must be the case that \(\abs{p} < \abs{\pmax}\) for every \(p \in \lambda_{\theta_0}\). 
		
	Now, if \(\ttm\) is such that \(\pmax \in \lambda_{\ttm}\), let \(R\) be a planar rotation mapping the semiradius \(\rad{\ttm}\) onto the semiradius \(\rad{\ttz}\). Then, \(R \pts{\pmax} \in R \pts{\lambda}\), but \(R \pts{\pmax} \in \exter \lambda = \compl{\pts{\cl{\inter \lambda}}}\), where \(\inter\) and \(\exter\) are used in the Jordan Curve Theorem sense, as the bounded and unbounded open connected components of \(\compl{\lambda}\), sharing \(\lambda\) as their common boundary. We remark that \(\compl{\pts{ \cl{\inter \lambda} }} \subset \set{z}{\abs{z} > \abs{\pmin}}\). So, it cannot be the case that \(R \pts{\lambda}\) is fully contained within \(\exter \lambda\), since \(\abs{R \pts{\pmin} } = \abs{\pmin}\). Therefore, we also must have \(R \pts{\lambda} \cap \cl{\inter \lambda} \neq \emptyset\). It follows that \(R \pts{\lambda} \cap \lambda \neq \emptyset\).
	
	That said, we may obtain \(p,q \in \gamma\) such that \(R \pts{g(p)} = g(q)\), implying \(\pts{g^{-1} \circ R \circ g}(p) = q\). Since \(p,q\) lie on the same circle, there exists a planar rotation \(U\) such that \(p = U (q)\). But then, \(g^{-1} \circ R \circ g \circ U\) defines an element of \(G_2\) fixing \(q\). By sharp 3-transitivity, it must be the identity. In particular, \(	g^{-1} \circ R \circ g \circ U \; \pts{\gamma} = \gamma\). Since rotations leave \(\gamma\) invariant, the above implies \(R \pts{\lambda} = \lambda\), which is a contradiction. Therefore, \(\lambda\) has to be a circle.
\end{proof}	

\begin{lemma} \label{lemma:conformal}
	Let \(G \subset \homeo{\US}\) be a sharply 3-transitive homogeneous group of diffeomorphisms. Then, every \(g \in G_2\) is conformal at the poles.
\end{lemma}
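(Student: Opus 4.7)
The plan is to exploit the information gathered in \hyperref[lemma:parallels]{Lemma \ref{lemma:parallels}}: any \(g \in G_2\), regarded as a planar diffeomorphism fixing \(\0\), sends each circle \(\{\abs{z} = r\}\) to some circle \(\{\abs{z} = \rho(r)\}\). In other words, the function \(\rho : \clopint{0,\!+\infty} \to \clopint{0,\!+\infty}\) defined by \(\rho(r) = \abs{g(z)}\) for any \(\abs{z} = r\) is well defined and satisfies \(\rho(0) = 0\), with \(\rho(r) > 0\) for \(r > 0\) since \(g\) is injective.

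Next, I intend to cross this rotational invariance of the radius with the first-order Taylor expansion of \(g\) at the South Pole. Setting \(A = \dif{g}{\0}\), differentiability gives \(g(r \e{\ii \theta}) = r \, A(\e{\ii \theta}) + o(r)\) as \(r \to 0^+\) for each fixed \(\theta\), hence
\[
    \frac{\rho(r)}{r} = \abs{ \, A(\e{\ii \theta}) \, } + o(1) \aas r \to 0^+ \pt
\]
Since the left-hand side does not depend on \(\theta\), I conclude that \(\theta \mapsto \abs{A(\e{\ii \theta})}\) is a constant \(c > 0\) (positivity coming from invertibility of \(A\)). Thus \(A\) carries the unit circle onto a circle centered at the origin, which forces \(A\) to be a positive scalar multiple of an orthogonal matrix; orientation preservation then rules out reflections, making \(A\) a positive multiple of a planar rotation. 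This is precisely conformality at \(\0\).

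Finally, I would handle the North Pole by transporting the argument to a chart around \(\inff\) via the inversion \(w = 1/z\), i.e., by looking at \(\tilde{g}(w) = 1/g(1/w)\). Under this inversion, the parallels \(\{\abs{z} = r\}\) become the parallels \(\{\abs{w} = 1/r\}\), still concentric at the origin of the new chart; also, \(\tilde{g}\) fixes \(0\) and is a \(\reg{1}\) orientation-preserving diffeomorphism there. The two preceding paragraphs apply verbatim to \(\tilde{g}\), giving conformality of \(g\) at \(\inff\). I expect no serious obstacle: the lone technical point is verifying that \(c > 0\) (immediate from \(A \in \mathrm{GL}(2,\RR)\)) and that \(A\) preserves orientation (inherited from \(g\)), both of which are automatic here.
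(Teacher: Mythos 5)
Your proof is correct and follows essentially the same route as the paper: both reduce to the planar picture via Lemma \ref{lemma:parallels}, apply the first-order expansion of \(g\) at the origin to a circle \(\{\abs{z}=r\}\), and conclude that \(A(\UC)\) must be a circle, hence \(A\) a positive multiple of a rotation. Your variant argues directly that \(\theta \mapsto \abs{A(\e{\ii\theta})}\) is constant (rather than comparing extremal directions \(\zmin,\zmax\) as the paper does), and you make explicit the reduction of the North Pole case to the origin via the inversion \(w = 1/z\), a step the paper's proof leaves implicit — both are fine.
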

\begin{proof}
	Given such \(g\), we know from \hyperref[lemma:parallels]{Lemma \ref{lemma:parallels}} that it permutes paralllels. It thus suffices to consider the case of a planar diffeomorphism fixing the origin and mapping circles centered at the origin onto circles centered at the origin. We know that \(A = \dif{g}{\0}\) is a linear isomorphism, so we may fix \(\zmin, \zmax \in \UC\) such that
	\[
		0 < \abs{A \zmin} = \min_{z \in \UC} \abs{Az} \leq \max_{z \in \UC} \abs{Az} = \abs{A \zmax} \pt 
	\]
	Since \(g\) preserves circles, for each \(t \in \opint{0,\!1}\):
	\[
		1 = \frac{ \abs{g\pts{t \zmin}} }{ \abs{g\pts{t \zmax}} } = \frac{ \abs{A \pts{t \zmin} + o \pts{\abs{ t \zmin}}} }{ \abs{A \pts{t \zmax} + o \pts{\abs{t \zmax}}} } = \frac{ \abs{ A \pts{\zmin} + \frac{o \pts{\abs{t \zmin}}}{t}} }{ \abs{ A \pts{\zmax} + \frac{o \pts{\abs{t \zmax}}}{t}} } \to \frac{ \abs{ A \pts{\zmin}} }{ \abs{ A \pts{\zmax}} } \quad \aas t \to 0^{+} \pt
	\]	
	It follows that \(A \pts{\UC}\) is a circle. Since \(A\) is orientation preserving, this is enough to conclude that it is a conformal matrix. In other words, \(g\) is conformal at \(\0\). 
\end{proof}

	Before proceeding to the proof of \hyperref[thm:thmc]{Theorem C}, let us make a small {remark}: if \(G\) is a 2-transitive homogeneous group of diffeomorphisms and \(\delta > 0\) is given, we may find \(\hd \in G\) such that \(\hd\) fixes \(\0\), but not \(\inff\), and \(\dif{\hd}{\0}\) is \(\delta\)-close to \(\id\).
	
	Indeed, this can be verified as follows: by 2-transitivity, we may fix \(h \in G\) such that \(h \pts{\0} = \0\) and \(h \pts{\inff} = \1\). Then, as long as \(0 \leq t < 2\pi\), \(h_t = h^{-1} \circ R_t \circ h\) fixes \(\0\), but not \(\inff\), since \(R_t \pts{\1}\) is a point on the equator distinct from \(\1\). It thus suffices to take \(\hd = h_{\tht}\), for sufficiently small \(\tht\). This might seem like an underuse of the 3-transitivity hypothesis. But, as it turns out, a result from \cite{TalKwakkel14} implies that any 2-transitive homogeneous group must actually be 3-transitive. Either way, we are now ready to establish \hyperref[thm:thmc]{Theorem C}.
	
	\subsection{Proof of Theorem C} 
	
	Suppose, for the sake of contradiction, that \(G\) contains a nonconformal mapping \(g\). By precomposing and postcomposing with suitable rotations, it may be assumed that \(g\) fixes \(\0\) and that \(A = \dif{g}{\0}\) is a nonconformal matrix. This means that, for some pair of unit vectors \(\uz\) and \(\vz\),
	\begin{equation}
		\text{if } \alpha = \iprod{\uz}{\vz} \aand \beta = \iprod{\frac{A \uz}{ \abs{A \uz} }}{\frac{A \uz}{ \abs{A \uz}}} \vg \text{ then } \alpha \neq \beta \pt
	\end{equation}
	Given \(\eps = \abs{\beta - \alpha}/2\) there exists \(\delta > 0\) such that, for every \(u,v,w,z\) unit vectors, \(\abs{u - z} < \delta\) and \(\abs{v - w} < \delta\) imply \(|\iprod{u}{v} - \iprod{z}{w}| < \eps\). We then fix \(\hd \in G\) such that
	\begin{itemize}
		\item \(\hd (\0) = \0\), 
		\item \(\hd^{-1} \pts{\inff} = \pd \neq \inff\), and
		\item \(\Cd = \dif{\hd}{\0}\) is \(\delta/2\)-close to \(\id\).
	\end{itemize}
	Since \(G\) is 3-transitive, we may fix \(\fd \in G_2\) such that \(\fd \pts{ g (\inff) } = \pd\). From \hyperref[lemma:conformal]{Lemma \ref{lemma:conformal}}, we know that \(B = \dif{\fd}{\0}\) is conformal. If we let \(g = \hd \circ \fd \circ g\), then also \(g \in G_2\), and thus \(D = \dif{g}{\0}\) is conformal. But, by the Chain Rule, \(D = \Cd B A\). We remark that, if \(\wz\) is any unit vector, then
	\begin{align*}
		\abs{ \frac{\Cd \wz}{\abs{\Cd \wz}} - \wz} &= \abs{ \Cd \wz - \wz - \frac{\abs{\Cd \wz} - 1}{ \abs{\Cd \wz}} \, \Cd \wz} \\
		&\leq 2 \, \abs{ \Cd \wz - \wz } < \delta \pt
	\end{align*}
	Since \(B\) is conformal, \(\ds \abs{ \frac{D\uz}{\abs{D\uz}} - \frac{A \uz}{\abs{ A \uz}} } < \delta\) and \(\ds\abs{ \frac{D\vz}{\abs{D\vz}} - \frac{A \vz}{\abs{A \vz}} } < \delta\). Accordingly,
	\[
		\abs{ \iprod{ \frac{D\uz}{\abs{D\uz}} }{ \frac{D\vz}{\abs{D\vz}} } - \iprod{ \frac{A \uz}{\abs{ A \uz}} }{ \frac{A \vz}{\abs{A \vz}} } } =  \abs{ \iprod{ \frac{D\uz}{\abs{D\uz}} }{ \frac{D\vz}{\abs{D\vz}} } - \beta }  < \eps \pt
	\]
	By the choice of \(\eps\), this ensures that the angle between \(D \uz\) and \(D \vz\) is different from the angle between \(\uz\) and \(\vz\), contradicting \(D\) conformal. Thus, such nonconformal \(g \in G\) cannot exist, and $G$ is a subgroup of \(\mob{\US}\).
	
	Lastly, let $M\in \mob{\US}$. Then, there exists \(h \in G \subset \mob{\US}\) such that \(h \pts{\0} = M(\0)\), \(h \pts{\1} = M \pts{\1}\) and \(h \pts{\inff} = M \pts{\inff}\). By the sharp 3-transitivity of \(\mob{\US}\), this implies \(M = h \in G\). Since \(M\) was arbitrary, \(G = \mob{\US}\) follows.
	
	\subsection*{Acknowledgements} 
	{
		\small U. Lakatos would like to thank Prof. S. Alvarez at Udelar for his genuine interest and fruitful discussions on the 
obtainment of uniform bounds on the proof of Lemma \ref{lemma:fundamentallemma}.
	}

\setstretch{1.0}	

\begin{bibdiv}
	\begin{biblist}
		\bib{Ghys01}{article}{
			title = {Groups acting on the circle},
			author = {Ghys, Étienne},
			journal = {L'Enseignement Mathématique},
			volume = {t. 47},
			date = {2001},
			pages = {329--407},
			doi = {https://dx.doi.org/10.5169/seals-65441}
			}
			
		\bib{GiblinMarkovic06}{article}{
			title = {Classification of continuously transitive circle groups},
			author = {Giblin, James},
			author = {Markovic, Vladimir},
			journal = {Geometry and Topology},
			volume = {Volume 10, issue 3},
			date = {2006},
			pages = {1319--1346},
			doi = {https://dx.doi.org/10.2140/gt.2006.10.1319}
			}
			
		\bib{LeRoux14}{article}{
			title = {On closed subgroups of the group of homeomorphisms of a manifold},
			author = {Le Roux, Frédéric},
			journal = {Journal de l'École Polytechnique -- Mathématiques},
			volume = {t. 1},
			date = {2014},
			pages = {147--159},
			doi = {https://10.5802/jep.7}
			}
			
		\bib{TalKwakkel14}{arxiv}{
			title = {Homogeneous transformation groups of the sphere \textbf{v2}},	
			author = {Kwakkel, Ferry},
			author = {Tal, Fábio Armando},
			date = {2014},
      eprint = {1309.0179},
      archiveprefix = {arXiv},
      primaryclass = {math.GT}
			}
			
		\bib{Kolev06}{article}{
			title = {Sous-groupes compacts d'homéomorphismes de la sphère},
			author = {Kolev, Boris},
			journal = {L'Enseignement Mathématique},
			volume = {t. 52},
			date = {2006},
			pages = {193--214},
			doi = {https://dx.doi.org/10.5169/seals-2231}
			}
			
		\bib{Needham00}{book}{
			title={Visual Complex Analysis},
			author={Needhman, Tristan},
			date={2000}, 
			publisher={Oxford University Press}
			}
			
		\bib{Banyaga97}{book}{
			title={The Structure of Classical Diffeomorphism Groups},
			author={Banyaga, Augustin},
			date={1997}, 
			publisher={Springer Science}
			}
			
		\bib{Franks02}{article}{		
			title = {Chapter 7 -- Topological methods in dynamics},
			author = {John Franks and Michal Misiurewicz},
			booktitle = {in: Handbook of Dynamical Systems},
			volume = {v.1},		
			date = {2002},
			publisher = {Elsevier Science},
			pages = {547--598},
			doi = {https://doi.org/10.1016/S1874-575X(02)80009-1}		
			}
			
		\bib{Matsuoka05}{article}{
			title={5. Periodic points and braid theory},
			author={Matsuoka, Takashi},
			booktitle = {in: Handbook of Topological Fixed Point Theory},	
			date={2005}, 
			publisher={Springer, Dordrecht},
			pages = {129--171},			
			}
		
		\bib{Handel85}{article}{
			title={Global shadowing of pseudo-Anosov homeomorphisms},
			author={Handel, Michael},
			journal = {Ergodic Theory and Dynamical Systems},	
			volume={5},
			number={3},
			date={1985}, 
			publisher={Cambridge University Press},
			pages = {373--377},
			doi = {https://doi.org/10.1017/S0143385700003011}		
			}

	\end{biblist}
\end{bibdiv}

\end{document}